\numberwithin{equation}{section}
\def\Re{\mathop{\mathrm{Re}}}
\def\Im{\mathop{\mathrm{Im}}}
\def\D{\mathbb D}
\def\H{\mathbb{H}}
\def\R{\mathbb R}
\def\Z{\mathbb Z}
\def\C{\mathbb C}
\def\Hol{{\sf Hol}}
\def\Aut{{\sf Aut}(\mathbb D)}
\def\Autm{\mathop{\mathsf{Aut}}}
\def\N{\mathbb N}
\def\const{{\rm const}}
\def\id{{\sf id}}
\DeclareMathOperator{\sgn}{\mathrm{sgn}}
\newcommand{\widesim}[1]{
  \mathrel{\lower.36ex\hbox{$\overset{#1}{\scalebox{1.25}[1]{$\sim$}}$}}
}
\newtheorem{theorem}{Theorem}[section]
\newtheorem{lemma}[theorem]{Lemma}
\newtheorem{proposition}[theorem]{Proposition}
\newtheorem{corollary}[theorem]{Corollary}
\theoremstyle{definition}
\newtheorem{definition}[theorem]{Definition}
\newtheorem{example}[theorem]{Example}
\theoremstyle{remark}
\newtheorem{remark}[theorem]{Remark}
\numberwithin{equation}{section}
\newcommand{\interior}{\mathop{\mathsf{int}}}
\newcommand{\Zen}{\mathcal{Z}}
\newcommand{\U}{\mathcal{U}}
\newcommand{\UH}{\mathbb{H}}
\newcommand{\UD}{\mathbb{D}}
\newcommand{\UC}{\partial\UD}
\newcommand{\Complex}{\mathbb{C}}
\newcommand{\Real}{\mathbb{R}}
\newcommand{\Natural}{\mathbb{N}}
\newcommand{\di}{\mathop{\mathrm{d}}\nolimits}
\newcommand{\anglim}{\angle\lim}
\renewcommand{\emptyset}{\varnothing}
\renewcommand{\ge}{\geqslant}
\renewcommand{\le}{\leqslant}
\renewcommand{\geq}{\geqslant}
\renewcommand{\leq}{\leqslant}
\newcommand{\proofof}[1]{{\fontseries{bx}\fontshape{it}\selectfont Proof of #1}}
\newcommand{\StepP}[1]{\medskip\noindent{\textsc{Proof of #1.}}}
\newcommand{\StepC}[2]{\medskip\noindent{\textsc{Case}~#1: #2.}}
\newcommand{\dff}[1]{\textsl{#1}}
\newcommand{\Tmap}{\mathfrak T}
\newcommand{\Smap}{{\mathfrak S}}
\newcommand{\PGdlc}{\color[rgb]{0.67,0.5,0.00}}
\newcommand{\PGaddc}{\color[rgb]{1,0.1,0.3}}
\newcommand{\PGC}[1]{{\color[rgb]{.25,0.1,0.9} PG: #1}}
\newcommand{\PGCm}[1]{{\color[rgb]{.2,0.7,0.3} PG: #1}}
\newcommand{\PGadd}[1]{{\PGaddc{}#1}}
\newcommand{\PGdl}[1]{{\PGdlc\ifmmode\xcancel{#1}\else\sout{#1}\fi}}
\newcommand{\PGdlm}[1]{{\PGdlc#1}}
\newcommand{\PGcr}[2]{\PGdl{#1}\,\,\PGadd{#2}}
\newcommand{\PGQ}[1]{\textbf{\textcolor[rgb]{0.82,0.27,0.00}{#1}}}
\newcommand{\REM}[1]{\relax}
\newcommand{\CRR}[1]{\marginpar{\def\mrkcol{0,0,0} \ifcase#1\relax%
                                                      \or\def\mrkcol{0.00,0.50,1.00}%
                                                      \or\def\mrkcol{1.00,0.00,1.00}%
                                                      \or\def\mrkcol{0.00,0.00,0.00}%
                                                      \or\def\mrkcol{0.25,0.25,1.00}%
                                                      \or\def\mrkcol{0.75,0,0.75}%
                                                      \or\def\mrkcol{0.99,0.44,0.02}%
                                                      \or\def\mrkcol{0.00,0.80,0.91}%
                                                      \or\def\mrkcol{1,0.0,0.25}%
                                                      \else\relax%
                                                      \fi%
                                   \expandafter\textcolor[rgb]{\mrkcol}{%
                                          \ifcase#1\tt CRR%
                                          \or\tt CRR\\Pavel Oct.\,15-18%
                                          \or{\tt CRR2}\\{\small by Pavel for\\ the Examples}
                                          \or{\tt CRR3}\\{\small by Manolo\\ and Santiago\\ Oct.\, 28}
                                          \or{\tt CRR4}\\{\small $\prod.\Gamma.$ Oct.\,30}
                                          \or{\tt CRR5}\\{\small en SEVILLA}
                                          \or{\tt CRR6}\\{últimas}
                                          \or{\tt CRR7}\\{bis}
                                           \or{\tt CRR8-$\Pi$.}
                                          \else\relax\fi}}}
\newenvironment{ourlist}{\begin{enumerate}[label={\rm (\arabic*)}, ref={\rm (\arabic*)}, left=.7em]}{\end{enumerate}}
\newenvironment{Ourlist}{\begin{enumerate}[label={\bf (\Alph*)}, ref={\rm (\Alph*)}, left=0em]%
\everydisplay{\makeatletter\def\@eqnum{\normalfont(\theequation)}\makeatother}}{\end{enumerate}}
\newenvironment{OurlistM}{\begin{enumerate}[label={\bf (\hskip.07em\Roman*\hskip.07em)}, ref={\rm (\hskip.05em\Roman*\hskip.05em)}, left=0em]%
\everydisplay{\makeatletter\def\@eqnum{\normalfont(\theequation)}\makeatother}}{\end{enumerate}}
\newenvironment{OurL}{\begin{enumerate}[label={\rm(\Alph*:}, ref={\rm \Alph*}]}{\end{enumerate}}
\newcommand{\SeC}[1]{{\color{violet} SE: #1}}
\newcommand{\Seadd}[1]{{\color{cyan} #1}}
\newcommand{\HideComments}{%
\renewcommand{\PGC}[1]{\relax}
\renewcommand{\PGCm}[1]{\relax}
\renewcommand{\PGadd}[1]{##1}
\renewcommand{\PGdl}[1]{\relax}
\renewcommand{\PGdlm}[1]{\relax}
\renewcommand{\PGcr}[2]{##2}
\renewcommand{\PGQ}[1]{\relax}
\renewcommand{\Seadd}[1]{##1}
\renewcommand{\SeC}[1]\relax
\renewcommand{\CRR}[1]\relax
}
\begin{document}\HideComments
\title[Centralizers and the embedding problem]{Centralizers of non-elliptic univalent self-maps  and the embeddability problem in the unit disc}
\author[M. D. Contreras]{Manuel D. Contreras $^\dag$}

\author[S. D\'{\i}az-Madrigal]{Santiago D\'{\i}az-Madrigal $^\dag$}
\address{Camino de los Descubrimientos, s/n\\
	Departamento de Matem\'{a}tica Aplicada II and IMUS\\
	Universidad de Sevilla\\
	Sevilla, 41092\\
	Spain.}\email{contreras@us.es} \email{madrigal@us.es}

\author[P. Gumenyuk]{Pavel Gumenyuk$^\ddag$}\address{Pavel Gumenyuk: Department of
Mathematics\\ Milano Politecnico, via E. Bonardi 9\\ Milan 20133, Italy.}
\email{pavel.gumenyuk@polimi.it}

\date\today

\subjclass[2020]{Primary 30C55, 37F44}
\keywords{Centralizers of univalent maps, semigroups of holomorphic functions, embeddability problem.}
\thanks{This research was supported in part by Ministerio de Innovaci\'on y Ciencia, Spain, project PID2022-136320NB-I00, and Junta de Andaluc\'ia, project P20\_00664.}

\maketitle

\begin{abstract}
The embeddability problem is a very old and hard problem in discrete holomorphic iteration which deals with determining general conditions on a given univalent self-map $\varphi$ of the unit disc $\D$ in order to be contained in a continuous one-parameter semigroup. In this paper, we tackle this embedding problem by establishing different dichotomy results about the centralizer of $\varphi$ (i.e. the set of all univalent self-maps commuting with $\varphi$) which depend strongly on the dynamical character of $\varphi$. Our approach is, in part, based on a new technique to obtain simultaneous linearizations of two non-elliptic  univalent self-maps of the unit disc, which might be interesting on their own. We also introduce and study several closed additive subsemigroups of the complex plane that collect the main features of the centralizer of~$\varphi$ and which play a prominent position in those dichotomy results.
\end{abstract}

\tableofcontents

\section{Introduction}

The embeddability problem is the problem to determine whether a given univalent (i.e. injective and holomorphic) self-map of the unit disc $\D:={\{z\in\C:|z|<1\}}$ is contained in a continuous one-parameter semigroup, i.e. in the image of a continuous homomorphism from the semigroup formed by the non-negative reals and endowed with the standard Euclidean topology to the semigroup of all univalent self-maps of~$\D$ with the topology of locally uniform convergence and with the composition as the semigroup operation.  This problem has been satisfactorily solved only for a small collection of univalent functions. The embedding problem for linear-fractional self-maps of the unit disc was solved in \cite[Theorem~3.4]{BCD-LFM} (see also \cite[Theorems~9.5.2 and 9.5.3]{BCD-Book}). In particular, it was shown that for every non-elliptic linear-fractional self-map~$\varphi$ of the unit disc there is a continuous one-parameter semigroup (necessarily also of lineal-fractional maps) $(\phi_t)$ such that ${\phi_{1}=\varphi}$, while if $\varphi$ is elliptic, its embeddability depends on the relative position of the fixed points of $\varphi$ on the Riemann sphere and the spectral value. For the rest of univalent self-maps, this problem is usually extremely hard, unless the self-map luckily fails to have one of the nice properties known for elements of one-parameter semigroups, such as existence of angular limits everywhere on the unit circle.

Despite of its difficulty, the embeddability problem is an area of continued interest. Closely related problems appear in different contexts, e.g. in the study of Markov chains, in Operator Theory, and for real-analytic functions; see e.g. \cite{EmbOp}, \cite[Sect.\,6]{Goryainov-survey}, \cite[Sect.\,4]{Bonet}, and references therein.

In this paper, we tackle the  embeddability problem by studying the centralizer $\Zen(\varphi)$  of a univalent self-map of the unit disc $\varphi$ (i.e. the set of all univalent self-maps commuting with $\varphi$). As a byproduct of the central results of the paper, we are able to relate the embeddability of a univalent self-map with the structure of its centralizer.

The embeddability problem for homeomorphisms and diffeomorphisms of finite-dimensional manifolds, with different regularity assumptions, has been studied since 1970s, see e.g. \cite{Bonomo-Varandas} and references therein. In this framework, the centralizer is directly related to the symmetries of a homeomorphism. Recently,
Damjanovi\'c, Wilkinson, and  Xu~\cite{DWX} have given an explicit description of the centralizers in some classes of volume-preserving ergodic partially hyperbolic $C^{\infty}$-diffeomorphisms and applied this description to determine when such diffeomorphisms can be embedded into  volume-preserving $C^\infty$-flows. Surely, our setting is quite different and requires completely different methods, but the very idea to relate the embeddability to the structure of the centralizer, borrowed from~\cite{DWX}, proved to be fruitful, as our results illustrate, also in the framework of univalent self-maps.

For two explicitly given univalent self-maps it is almost straightforward to check whether they commute or not. However, it is still very hard to describe explicitly the whole centralizer of a univalent self-map. Some interesting preceding studies in this direction can be cited. For example, in the paper~\cite{Pranger}, Pranger studied the centralizer of an elliptic self-map of the unit disc. In the setting of non-elliptic functions (the backdrop of this paper), it is worth mentioning the results of Behan \cite{Behan}, Cowen \cite{Cowen-comm},  Vlacci \cite{Vlacci}, and the joint paper of Bisi and Gentili~\cite{GB}. In a certain sense, the two former papers might be considered the closest ones to the topic of this work.  Most of the results obtained in~\cite{Behan}, \cite{Cowen-comm},  \cite{Vlacci}, and~\cite{GB}  appeared collected in the recent book of Abate \cite[Section~4.10]{Abate2}, where an extensive and very detailed list of results on properties shared by commuting holomorphic self-maps of the unit disc can be found in the endnotes of Section~4.10. We also point out that the centralizer of a parabolic automorphism was described in \cite[Theorem~1.2.27]{Abate}.

In our approach to study the centralizer, we develop a technique to obtain simultaneous linearizations of two commuting non-elliptic univalent self-maps of the unit disc. Namely, in Theorem~\ref{TH_simultaneous}, we show that given two such self-maps $\varphi$ and $\psi$, there exist a univalent function $h_{\varphi,\psi}:\UD\to\C$ and a constant ${c_{\varphi,\psi}\in\C}$ satisfying the following system of Abel's equations:
\begin{equation}\label{EQ_simultaneousintroduction}
h_{\varphi,\psi}\circ\varphi=h_{\varphi,\psi}+1\quad\text{and}\quad
h_{\varphi,\psi}\circ\psi=h_{\varphi,\psi}+c_{\varphi,\psi}.
\end{equation}
An aspect which to our best knowledge is completely new, is that to any abelian part of the centralizer, i.e. a subset~$\Delta$ whose elements commute with each other, there corresponds a \textit{common solution} $h_{\varphi,\Delta}$ to~\eqref{EQ_simultaneousintroduction} independent from the choice of~$\psi$ within~$\Delta$. We also show that the constant $c_{\varphi,\psi}$  can be computed using an explicit formula, see~\eqref{EQ_formula-for-c}, which does not require knowing $h_{\varphi,\Delta}$.

Clearly, simultaneous linearization of commuting self-maps is a quite natural idea and has been already studied in the framework of holomorphic dynamics. Probably and currently, the canonical work on this topic (covering higher complex dimensions) is the paper~\cite{Simultaneous} by Arosio and Bracci; see also the references therein. In fact, in the hyperbolic case, our Theorem~\ref{TH_simultaneous} can be deduced from~\cite[Theorem~3.11]{Simultaneous}.  At the same time, as explained in Section~\ref{S_simultaneous}, the results of~\cite{Simultaneous} do not imply Theorem~\ref{TH_simultaneous} for parabolic self-maps.

The proof of Theorem~\ref{TH_simultaneous} is given in three different sections depending on the properties of~$\varphi$. For parabolic self-maps of zero hyperbolic step and for hyperbolic self-maps, Theorem~\ref{TH_simultaneous} follows from Proposition~\ref{PR_abelian}, which is proved for these two cases in Sections~\ref{S_para-zero} and~\ref{S_hyperbolic}, respectively. Further, in Section~\ref{S_para-positive} we prove Proposition~\ref{PR_simultaneous-para-positive}, which implies Theorem~\ref{TH_simultaneous} for parabolic self-maps of positive hyperbolic step. This last case is by far the most interesting and deepest one. \smallskip

To proceed to our main results, we introduce two topologically closed additive subsemigroups of the complex plane, associated to a given non-elliptic univalent self-map $\varphi$, that collect the main features of its centralizer $\Zen(\varphi)$: $\mathcal A_{\varphi}$ (see Page~\pageref{PropA_varphi} for its definition) and $\mathcal A_{\varphi}^*$ (see Page \pageref{Thm:PHS-S}). With these sets in mind, we can state our results relating embeddability with the structure of the centralizer gathered as follows.

Denote by $\U(\UD)$ the set of all univalent (i.e. injective holomorphic) self-maps of the unit disc~$\UD$.
\begin{theorem}
Let $\varphi\in\U(\UD)$ be a non-elliptic  self-map.
\begin{OurL}\def\itm#1{\item {\rm#1{)}\,}}
\itm{Theorem \ref{TH_dichotomy-nonelliptic}} If $~\id_\UD$ is not isolated in~$\mathcal Z(\varphi)$, then $\mathcal Z(\varphi)$ contains a non-trivial continuous one-parameter semigroup $(\phi_t)$.
    \medskip%
\itm{Proposition \ref{Pro:hyp-dichotomy}}
	If $\varphi $ is hyperbolic and not an automorphism,  then exactly one of the following alternatives hold:\smallskip
		\begin{enumerate}[left=1.2em]
			\item[\rm (i)] either $\id_\UD$ is an isolated point of~$\Zen(\varphi)$ and, in this case, $\varphi$ is not embeddable,
			\item[\rm (ii)] or there exists a unique continuous one-parameter semigroup $(\phi_t)\subset\U(\UD)$ with ${\phi_1=\varphi}\,$ and, in this case $\Zen(\varphi)=\{\phi_t:t\ge0\}$.
		\end{enumerate}
     \medskip%
\itm{Theorem \ref{PR_emb-dichotomy-P0HS}} If $\varphi$ is parabolic of zero hyperbolic step, then exactly one of the following alternatives holds:\smallskip
	\begin{enumerate}[left=1.2em]
		\item[\rm (i)] either there exist $\rho>0$ and $\delta>0$ such that
		$$\mathcal A_{\varphi }\cap\{c\in\C: 0<|c|<\rho, |\mathrm{Arg}(c)|<\delta\}=\emptyset,$$
		\item[\rm (ii)] or there exists a unique one-parameter semigroup $(\phi_t)\subset\U(\UD)$ with ${\phi_1=\varphi}$.
	\end{enumerate}
     \medskip%
     \newpage
\itm{Theorem \ref{TH_emb-dichotomy-P-PHS}} If $\varphi$ is parabolic of positive hyperbolic step, then exactly one of the following alternatives holds:\smallskip
	\begin{enumerate}[left=1.2em]
		\item[\rm (i)] either there exist $\rho>0$ and $\delta>0$ such that
		$$\mathcal A^*_{\varphi }\,\cap\,\{c\in\C: 0<|c|<\rho, |\mathrm{Arg}(c)|<\delta\}~=~\emptyset,$$
		\item[\rm (ii)] or there exists a unique continuous one-parameter semigroup $(\phi_t)\subset\U(\UD)$ with~${\phi_1=\varphi}$.
	\end{enumerate}
\end{OurL}
\end{theorem}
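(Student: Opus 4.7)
The plan is to prove the contrapositive: assume that alternative (i) fails and deduce (ii). So suppose that for every $\rho, \delta > 0$ there exists $c \in \mathcal{A}^*_\varphi$ with $0 < |c| < \rho$ and $|\mathrm{Arg}(c)| < \delta$, which means that $\mathcal{A}^*_\varphi \setminus \{0\}$ accumulates at $0$ within some angular sector. From this I would extract a sequence $c_n \in \mathcal{A}^*_\varphi$ with $c_n \to 0$ and $\mathrm{Arg}(c_n) \to 0$.

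By unpacking the definition of $\mathcal{A}^*_\varphi$ (and, if necessary, an approximation step since $\mathcal{A}^*_\varphi$ is defined as a closure in the positive hyperbolic step case), each $c_n$ corresponds to some $\psi_n \in \Zen(\varphi)$ for which a common linearizer $h$ from Theorem \ref{TH_simultaneous} satisfies $h \circ \varphi = h + 1$ and $h \circ \psi_n = h + c_n$. In the positive hyperbolic step regime the image $\Omega := h(\UD)$ is contained in (and is comparable to) a half-plane, and the requirement that $h(\cdot) + c_n$ take values in $\Omega$ forces $\Re c_n \geq 0$ asymptotically; together with univalence of $h$, standard locally uniform estimates yield $\psi_n = h^{-1} \circ (h + c_n) \to \id_\UD$ locally uniformly in $\UD$. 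Hence $\id_\UD$ is not isolated in $\Zen(\varphi)$, and Theorem \ref{TH_dichotomy-nonelliptic} furnishes a non-trivial continuous one-parameter semigroup $(\phi_t) \subset \Zen(\varphi)$.

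Applying Theorem \ref{TH_simultaneous} to the commuting pair $(\varphi, \phi_t)$ with the common linearizer $h$, we get $h \circ \phi_t = h + c_{\varphi, \phi_t}$ for every $t \geq 0$, where $t \mapsto c_{\varphi, \phi_t}$ is a continuous additive homomorphism $[0,\infty) \to \C$ with $c_{\varphi, \phi_0} = 0$. Thus $c_{\varphi, \phi_t} = t c_0$ for some $c_0 \in \C \setminus \{0\}$. To conclude $\phi_1 = \varphi$ after reparametrization $\tilde\phi_t := \phi_{t/c_0}$, one needs $c_0$ to be real positive. The critical step would be to show that the non-tangential accumulation at $0$, together with $1 \in \mathcal{A}^*_\varphi$ (since $\varphi$ trivially commutes with itself, giving $c_{\varphi,\varphi} = 1$), forces $\mathcal{A}^*_\varphi$ to contain the entire non-negative real ray, and the semigroup produced above can then be chosen to run along this ray. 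This should rest on the classical structure theorem for closed additive subsemigroups of the right half-plane of $\C$: such a subsemigroup accumulating non-tangentially at $0$ cannot be discrete in any direction, and the presence of $1$ pins down the positive real ray.

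Uniqueness is straightforward: any continuous one-parameter semigroup $(\tilde\phi_t)$ with $\tilde\phi_1 = \varphi$ admits $h$ as a simultaneous linearizer with $h \circ \tilde\phi_t = h + c_t$ for a continuous additive homomorphism $t \mapsto c_t$ satisfying $c_1 = 1$, hence $c_t = t$, and univalence of $h$ forces $\tilde\phi_t = \phi_t$. The main obstacle I foresee is extracting a \emph{real positive} direction for the semigroup from the mere non-tangential accumulation of $\mathcal{A}^*_\varphi$ at $0$ — a delicate property of closed additive subsemigroups of the right half-plane that will require the precise geometric shape of $\mathcal{A}^*_\varphi$ allowed in the positive hyperbolic step case. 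A secondary obstacle is verifying in the first step that elements of $\mathcal{A}^*_\varphi$ sufficiently close to $0$ are genuinely realized by elements of $\Zen(\varphi)$, rather than merely by limits thereof, since the definition of $\mathcal{A}^*_\varphi$ includes a closure operation for this regime.
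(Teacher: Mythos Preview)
Your proposal has a genuine gap rooted in a misunderstanding of what $\mathcal{A}^*_\varphi$ is and how its elements are realized.

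First, a clarification on your ``secondary obstacle'': the set $\mathcal{A}^*_\varphi$ is \emph{defined} as the image $\Smap_\varphi\big(\Zen(\varphi)\big)$, not as a closure, and Theorem~\ref{Thm:PHS-S}\,\ref{IT_PHS-closed} proves it is closed. So every $c\in\mathcal{A}^*_\varphi$ is genuinely realized as $\Smap_\varphi(\psi)$ for some $\psi\in\Zen(\varphi)$.

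The real error is the next step. You write that ``each $c_n$ corresponds to some $\psi_n\in\Zen(\varphi)$ for which a common linearizer $h$ satisfies $h\circ\psi_n=h+c_n$'', and then treat $h$ as the Koenigs function of~$\varphi$ throughout, concluding $\psi_n=h^{-1}\circ(h+c_n)$. In the positive hyperbolic step case this is \emph{false in general}: by Proposition~\ref{positivo1}, an element $\psi\in\Zen(\varphi)$ has the form $\psi=h^{-1}\circ g\circ h$ with $g(w)=w+F(e^{2\pi i w})$, where $F:\UD\to\overline\UH$ need not be constant, and $\Smap_\varphi(\psi)=F(0)$. The simultaneous linearizer $h_{\varphi,\psi}$ of Theorem~\ref{TH_simultaneous} exists, but it depends on~$\psi$ and is typically \emph{not} the Koenigs function of~$\varphi$. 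Consequently, your deduction that $\psi_n\to\id_\UD$ from $c_n\to 0$, and your subsequent appeal to Theorem~\ref{TH_dichotomy-nonelliptic}, are unjustified as written.

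The paper bypasses all of this with a direct semigroup argument on $\mathcal{A}^*_\varphi$ itself. Since $\mathcal{A}^*_\varphi$ is a \emph{closed additive subsemigroup} of $\C$ (Theorem~\ref{Thm:PHS-S}\,\ref{IT_PHS-homom},\,\ref{IT_PHS-closed}), the failure of~(i) gives $r_ne^{i\theta_n}\in\mathcal{A}^*_\varphi$ with $r_n,\theta_n\to 0$; for each $t>0$ the integer multiples $\lfloor t/r_n\rfloor\,r_ne^{i\theta_n}\in\mathcal{A}^*_\varphi$ converge to $t$, so $[0,+\infty)\subset\mathcal{A}^*_\varphi$. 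The step you are missing is then Theorem~\ref{Thm:PHS-S}\,\ref{IT_PHS-real}: $\mathcal{A}^*_\varphi\cap\Real=\mathcal{A}_\varphi\cap\Real$, because $F(0)\in\Real$ with $F(\UD)\subset\overline\UH$ forces $F$ to be constant. Hence $[0,+\infty)\subset\mathcal{A}_\varphi$, i.e.\ $\Omega+t\subset\Omega$ for all $t\ge0$ with $\Omega$ the Koenigs domain, and one defines $\phi_t:=h^{-1}\circ(h+t)$ directly. Uniqueness is simply Corollary~\ref{CR_uniquesemigroup}. No passage through Theorem~\ref{TH_dichotomy-nonelliptic} is needed, and your ``main obstacle'' about extracting a real direction dissolves once the equality $\mathcal{A}^*_\varphi\cap\Real=\mathcal{A}_\varphi\cap\Real$ is in hand.
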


Beyond the above theorem, there are many differences between the centralizers of hyperbolic and parabolic self-maps. For example, Proposition~\ref{Prop:equalcentralizer} shows that if $\varphi\in\U(\UD)$ is hyperbolic and $\psi\in\Zen(\varphi)$, with both of them different from~$\id_\UD$,  then $\Zen(\psi)=\Zen(\varphi)$. Quite the opposite, there are examples of commuting univalent parabolic self-maps $\varphi_{1}, \varphi_{2}$ such that $\mathcal Z(\varphi_{1})\neq \mathcal Z(\varphi_{2})$ (see e.g. Examples~\ref{EX_parab-autom} and~\ref{EX_Z-non-abelian}).

The paper is organized as follows. In Section~\ref{S_preliminaries}, we recall some preliminaries from holomorphic dynamics that will be needed to follow the paper. In Section~\ref{S_first-results}, we relate the centralizer of a univalent self-map with the properties of its holomorphic model.  In  Section~\ref{S_simultaneous}, our result about simultaneous solutions to Abel's equation for two commuting univalent non-elliptic self-maps of the unit disc (Theorem~\ref{TH_simultaneous}) is stated together with some  initial consequences. The next three sections deal, respectively, with parabolic self-maps of zero hyperbolic steps, hyperbolic self-maps, and parabolic self-maps of positive hyperbolic steps. In Section~\ref{S_examples}, we provide a number of examples to show both the scope of our results and different behaviour of the centralizers of non-elliptic self-maps.  We end the paper with an Appendix to show how an arbitrary univalent solution to Abel's equation (which may differ essentially from the Koenigs function) provides the classification of the given univalent self-map. This result has been used in Section~\ref{S_para-positive}.
\medskip

\noindent {\bf Acknowledgement.} We thank Prof. Filippo Bracci for providing us some useful references related to the study of centralizers of holomorphic self-maps in the unit disc. We also appreciate his valuable suggestions that have substantially improved the paper.

\section{Preliminaries} \label{S_preliminaries}
Below we introduce some notation and basic theory used further in the paper. For a more detail and for the proofs of the results presented in this section, we refer the interested readers to the recent monographs \cite{Abate2,BCD-Book}.

\subsection{Notation}
As usual, we denote the unit disc by
${\UD:=\{z\in\C:|z|<1\}}$, $\D^*$ will be the punctured disc $\D\setminus \{0\}$ and we write $\UH:={\{w\in\C:\Im w>0\}}$ for the upper half-plane and $\UH_{r}:={\{z\in\C:\Re w>0\}}$ for the right half-plane.  As usual, let $\Natural_0:={\Natural\cup\{0\}}$.

Furthermore, denote by $\Hol(D,E)$ the class of all holomorphic mappings of a domain $D\subset\C$ into a set $E\subset\C$,
and let $\U(D,E)$ stand for the class of all \textit{univalent} (i.e. injective holomorphic) mappings from $D$ to~$E$.  As usual, we endow $\Hol(D,E)$ and $\U(D,E)$ with the topology of locally uniform convergence. In case $E=D$, we will write  $\Hol(D)$ and $\U(D)$ instead of $\Hol(D,D)$ and $\U(D,D)$, respectively.
Note also that endowed with the binary operation ${(\varphi,\psi)\mapsto \varphi\circ\psi}$,  the class~$\U(D)$ becomes a topological semigroup with the neutral element~$\id_D$.

For a self-map $\varphi:D\to D$ of a domain $D\subset\C$ and ${n\in\Natural}$ we denote by $\varphi^{\circ n}$ the $n$-th iterate of~$\varphi$, and let $\varphi^{\circ0}:=\id_D$. Moreover, if $\varphi$ is an automorphism of~$D$ and, then for every $n\in\N$, we denote by $\varphi^{\circ-n}$ the $n$-th iterate of~$\varphi^{-1}$.

\subsection{Holomorphic self-maps of the unit disc}
The study of the dynamics of an arbitrary holomorphic self-map $\varphi$ of the unit disc $\mathbb{D}$ is a classical and well-established branch of Complex Analysis. The central result in the area is the Denjoy\,--\,Wolff Theorem, which states that if $\varphi$ is different from an elliptic automorphism (i.e. not an automorphism of~$\UD$ possessing a fixed point in~$\UD$), then the sequence of the iterates $(\varphi ^{\circ n})$ converges locally uniformly in~$\UD$ to a certain point~${\tau\in\overline{\mathbb{D}}}$.  This point  is called the \textit{Denjoy\,--\,Wolff point\/} of $\varphi$. Moreover, if $\tau\in \partial \D$, it is the unique boundary fixed point at which the angular derivative $\varphi'(\tau)$ exists and belongs to $(0,1]$.

According to the position of the Denjoy\,--\,Wolff point~$\tau$ and the value of~$\varphi'(\tau)$, holomorphic self-maps $\varphi\in\Hol(\UD)$ different from elliptic automorphisms are divided into three categories. Namely, $\varphi$ is called:
\begin{itemize}
\item[(a)] \textit{elliptic\/} if $\tau\in\UD$,

\item[(b)] \textit{hyperbolic\/} if $\tau\in \partial \D$ and $\varphi'(\tau )<1$, and

\item[(c)] \textit{parabolic\/} if $\tau
\in \partial \D$ such that $\varphi'(\tau )=1$.
\end{itemize}
The identity mapping~$\id_\UD$ and all elliptic automorphisms of~$\UD$ are conventionally included in the category~(a) of elliptic self-maps. By the Denjoy\,--\,Wolff point  of an elliptic automorphism different from~$\id_\UD$ we mean its unique fixed point in~$\UD$. Finally, we do not define the Denjoy\,--\,Wolff point for ${\varphi=\id_\UD}$.

\begin{definition}
By the \textit{multiplier} of $\varphi\in\Hol(\UD)\setminus\{\id_\UD\}$ we mean the number $\varphi'(\tau)$, where $\tau$ is the Denjoy\,--\,Wolff point of~$\varphi$.
\end{definition}
Note that in the case $\tau\in\UC$, $\varphi'(\tau)$ in the above definition (and in what follows) stands for the angular derivative at~$\tau$.\medskip

As we will see, parabolic self-maps can have very different properties depending on the so-called \textit{hyperbolic step}.
Denote by $\rho_\D$ the hyperbolic distance in $\mathbb{D}$, and let $\varphi\in\Hol(\UD)$ be non-elliptic. Thanks to the Schwarz\,--\,Pick Lemma, for the orbit $\big(z_n\big):=\big(\varphi^{\circ n}(z_0)\big)$ of any point ${z_0\in\UD}$, there exists a finite limit $q(z_0):=\lim_{n\to+\infty} \rho_\D(z_{n},z_{n+1})$. It is known, see e.g. \cite[Corollary\,4.6.9]{Abate2}, that  either $q(z_0)>0$ for all~${z_0\in\UD}~$ or $q\equiv0$ in~$\UD$.  The self-map~$\varphi$ is said to be of \textit{positive} or of \textit{ zero hyperbolic step} depending whether the former or the latter alternative occurs.
If $\varphi$ is
hyperbolic, then it is always of positive hyperbolic step. However, there exist parabolic self-maps of zero as well as of positive hyperbolic step.

\subsection{Commuting holomorphic self-maps}
It is clear that if two  holomorphic self-maps $\varphi,\psi\in\Hol(\UD)\setminus\{\id_\UD\}$ commute, i.e. ${\varphi\circ\psi}={\psi\circ\varphi}$, and if one of them is elliptic, then the other is also elliptic and they share the Denjoy\,--\,Wolff point. The situation is not so evident when we consider non-elliptic self-maps. In 1973, Behan \cite{Behan}, see also \cite[Section 4.10]{Abate2}, proved that if $\varphi, \psi$ are non-elliptic self-maps of~$\UD$ with Denjoy\,--\,Wolff points $\tau_{\varphi}$ and $\tau_{\psi}$, respectively, then:
\begin{enumerate}
\item[(i)] if $\varphi$ is not a hyperbolic automorphism, then $\varphi$ and~$\psi$ share the  Denjoy\,--\,Wolff point, i.e.~$\tau_{\varphi}=\tau_{\psi}$;
\item[(ii)]  if $\varphi$ is a hyperbolic automorphism, then $\psi$ is a hyperbolic automorphism as well and it has the same fixed points as~$\varphi$.
\end{enumerate}

Later, Cowen proved that if $\varphi$ and $\psi$ are two non-elliptic commuting holomorphic self-maps of~$\D$ and if $\varphi$  is hyperbolic, then $\psi$ is also hyperbolic (and thus if $\varphi$ is parabolic, then $\psi$ is parabolic) \cite[Corollary~4.1]{Cowen-comm}, see also \cite[Theorem~1.3]{Simultaneous}.

It is worth mentioning that parabolic self-maps of positive hyperbolic step can commute with parabolic self-maps of zero hyperbolic step, see e.g. Remark~\ref{RM_PHS-commute-with-PHS}.\label{PPP} Moreover, in contrast to the hyperbolic case, the fact that one of them is an automorphism would not imply that the other must be also an automorphism. To see this, one can consider the following example: ${\varphi:=h^{-1}\circ(h+1)}$, ${\psi:=h^{-1}\circ(h+i)}$, where $h$ is a conformal map of~$\UD$ onto~$\UH$. We describe the centralizer of this automorphism~$\varphi$ in Example~\ref{EX_parab-autom}.

\begin{remark}
It is worth mentioning that commuting continuous one-parameter semigroups were studied in~\cite{conReich,conTauraso}; see also \cite[Sect.\,6.3]{EliShobook10}.
\end{remark}

\subsection{Holomorphic models for univalent self-maps}

An indispensable role in our study is played by the concept of a holomorphic model, which goes back to Pommerenke~\cite{Pom79}, Baker and Pommerenke~\cite{BakerPommerenke}, and Cowen~\cite{Cowen} and which is discussed below for the special case of a univalent self-map. The terminology we use is mainly borrowed from~\cite{Canonicalmodel}.

\begin{definition}\label{DF_holomorphic-model} A \textit{holomorphic model} of $\varphi\in\U(\UD)$ is any triple $\mathcal M:=(S,h,\alpha)$, where $S$ is a Riemann surface, $\alpha$ is an automorphism of $S$, and $h$ is a univalent map from $\UD$ into $S$ satisfying the following two conditions:
	\begin{enumerate}[left=2.5em]
		\item[(HM1)] $h\circ \varphi=\alpha \circ h$, {}~and
		\item[(HM2)] $S\,=\,\bigcup_{n\geq0} \alpha^{\circ \, -n}(h(\D))$.
	\end{enumerate}
The Riemann surface $S$ is called the \textit{base space}, and the map $h$ is called the \textit{intertwining map} of the holomorphic model~$\mathcal M$.
\end{definition}

\begin{remark}\label{RM_invariance_wrt_alpha}
Condition (HM1) in the above definition implies that $$\alpha\big(h(\UD)\big)=h\big(\varphi(\UD)\big)\subset h(\UD).$$
\end{remark}

Every $\varphi\in\U(\UD)\setminus\{\id_\UD\}$ admits an essentially unique holomorphic model. More precisely, the following fundamental theorem holds.

\begin{theorem}[\protect{\cite[Theorem~1.1]{Canonicalmodel}}] \label{Thm:uniqness} Every $\varphi\in\U(\UD)$ admits a holomorphic model. Moreover such a model is unique up to a model isomorphism; i.e., if $(S_1,h_1,\alpha_1)$ and $(S_2,h_2,\alpha_2)$ are holomorphic models for $\varphi$, then there exists a biholomorphic map $\eta$ of~$S_1$ onto~$S_2$ such that
	$$
	h_2=\eta\circ h_1,\quad \alpha_2=\eta\circ\alpha_1\circ \eta^{-1}.
	$$
\end{theorem}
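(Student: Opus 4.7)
The plan is to establish existence case-by-case, following the Denjoy--Wolff classification of $\varphi$, and to obtain uniqueness by propagating the natural identification $\eta_0:=h_2\circ h_1^{-1}$ from $h_1(\UD)$ to all of~$S_1$ using the intertwining relation (HM1) and the exhaustion condition (HM2).

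For existence, I would invoke the classical linearization theorems. If $\varphi=\id_\UD$ or $\varphi$ is an elliptic automorphism, the triple $(\UD,\id_\UD,\varphi)$ already serves as a model. If $\varphi$ is elliptic, non-automorphism, with interior Denjoy--Wolff point $\tau$ and multiplier $\lambda:=\varphi'(\tau)$, Koenigs' theorem provides a univalent $h:\UD\to\C$ satisfying $h\circ\varphi=\lambda h$; one then takes $S=\C$ and $\alpha(z)=\lambda z$. For the non-elliptic cases, the relevant constructions are: Valiron's theorem in the hyperbolic case (intertwining $\varphi$ with $\alpha(w)=w/\varphi'(\tau)$ on the right half-plane), the Baker--Pommerenke theorem in the parabolic zero hyperbolic step case (intertwining $\varphi$ with $\alpha(w)=w+1$ on $\C$), and Cowen's construction in the parabolic positive hyperbolic step case (intertwining $\varphi$ with $\alpha(w)=w+1$ on a horizontal strip). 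In each situation, the base space is enlarged, when necessary, by passing to the increasing union $\bigcup_{n\ge0}\alpha^{\circ -n}(h(\UD))$; this is automatically a Riemann surface on which $\alpha$ extends to an automorphism, and it satisfies~(HM2) by construction.

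For uniqueness, define $\eta_0:=h_2\circ h_1^{-1}:h_1(\UD)\to h_2(\UD)$, which is a biholomorphism by the univalence of $h_1,h_2$. For each $n\in\Natural_0$ set
$$
\eta_n:=\alpha_2^{\circ -n}\circ\eta_0\circ\alpha_1^{\circ n}\quad\text{on}\quad U_n:=\alpha_1^{\circ -n}\big(h_1(\UD)\big).
$$
The family $\{U_n\}$ is increasing and exhausts $S_1$ by (HM2). Compatibility on $U_n\subset U_{n+1}$ reduces, via (HM1) applied to both models, to the identity $h_2\circ\varphi=\alpha_2\circ h_2$: if $w\in U_n$, write $\alpha_1^{\circ n}(w)=h_1(z)$ for some $z\in\UD$, and observe
$$
\eta_{n+1}(w)=\alpha_2^{\circ -(n+1)}\big(h_2(\varphi(z))\big)=\alpha_2^{\circ -n}\big(h_2(z)\big)=\eta_n(w).
$$
Consequently the $\eta_n$ patch together to a well-defined holomorphic $\eta:S_1\to S_2$. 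Surjectivity follows by applying the same argument with the roles of the models reversed, producing a two-sided inverse; injectivity follows on each $U_n$ from the injectivity of $\eta_0$ and of the $\alpha_j^{\circ -n}$. Finally, $\eta\circ\alpha_1=\alpha_2\circ\eta$ holds on $h_1(\UD)$ by the very definition of $\eta_0$ and~(HM1), and then propagates to $S_1$ by construction of the $\eta_n$.

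The main obstacle lies in the existence part for parabolic self-maps of positive hyperbolic step, where Cowen's construction of the intertwiner into a horizontal strip is subtle and does \emph{not} follow the simple linearization pattern of the other cases; special care is needed to guarantee simultaneously the univalence of $h$, the correct intertwining with the translation, and the Riemann-surface structure on the exhausting union. On the uniqueness side, the only delicate point is the well-definedness of $\eta$ on overlaps, which as sketched above is a direct consequence of~(HM1).
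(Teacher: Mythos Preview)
The paper does not supply its own proof of this theorem: it is quoted verbatim from \cite[Theorem~1.1]{Canonicalmodel} and used as a preliminary. The closest the paper comes to an argument is Remark~\ref{RM_factorization}, which sketches exactly the propagation procedure you use for uniqueness (defining $\eta$ on $h_1(\UD)$ and extending it by $\eta(z):=\alpha_2^{\circ-n}\big(\eta\big(\alpha_1^{\circ n}(z)\big)\big)$ for $n$ large). Your uniqueness argument is therefore correct and is, in essence, the standard one.

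Two small corrections on the existence side. First, you have the base spaces for the non-elliptic cases swapped: in the translation model $(S,h,z\mapsto z+1)$, the hyperbolic case lives on a bounded horizontal \emph{strip} (equivalently, after exponentiation, Valiron's multiplicative model on a half-plane), while the parabolic positive-hyperbolic-step case lives on a horizontal \emph{half-plane}; compare Theorem~\ref{Thm:model}\,\ref{IT_HM-hyp} and~\ref{IT_HM-para-PHS}. Second, your remark that one ``enlarges the base space by passing to $\bigcup_{n\ge0}\alpha^{\circ-n}(h(\UD))$'' is the heart of the matter in Cowen's approach and deserves more than a parenthetical: one must verify that this union is simply connected (it is, being an increasing union of simply connected domains containing a common point), identify it conformally with one of the standard planar domains, and check that $\alpha$ becomes the indicated automorphism under that identification. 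None of this is deep, but it is where the actual work in the existence proof lies.
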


\begin{remark}\label{RM_factorization}
The uniqueness of a holomorphic model up to an isomorphism extends to a slightly more general statement.
Let $(S,h,\alpha)$ be a holomorphic model for $\varphi\in\U(\UD)$ and let $S_1$ be a Riemann surface. If
\begin{equation}\label{EQ_intertw}
 h_1\circ\varphi=\alpha_1\circ h_1
\end{equation}
holds for some holomorphic map ${h_1\in\Hol(\UD,S_1)}$ and some ${\alpha_1\in\Autm(S_1)}$, then the intertwining map in~\eqref{EQ_intertw} factorizes via~$h$, i.e. ${h_1=\eta\circ h}$ for a suitable ${\eta\in\Hol(S,S_1)}$.  Observe that $h$ is univalent and hence ${\eta:=h_1\circ h^{-1}}$ is a well-defined holomorphic map from~$h(\UD)$ to~$S_1$. The main point is that $\eta$ extends to a holomorphic map from the whole of~$S$ to~$S_1$. Indeed, combining~\eqref{EQ_intertw} with (HM1), we get ${\eta\circ\alpha=\alpha_1\circ \eta}$. Taking into account the absorption property~(HM2) and  Remark~\ref{RM_invariance_wrt_alpha}, one can use a standard argument, see e.g. \cite[Proof of Proposition~3.10]{Canonicalmodel}, to extend~$\eta$ to all~${z\in S}$ by setting $\eta(z):=\alpha_1^{\circ-n}\big(\eta\big(\alpha^{\circ n}(z)\big)\big)$, where ${n=n(z)\in\Natural}$ is large enough so that ${\alpha^{\circ n}(z)\in h(\UD)}$.
\end{remark}

The type of a univalent self-map (elliptic, hyperbolic, or parabolic) is reflected in, and actually can be fully determined from the kind of holomorphic model $\varphi$ admits.  For an open interval $I\subset\Real$, we define
$$
 S_I:=\Real\times I=\{x+iy:x\in\Real,\,y\in I\}.
$$

\begin{theorem}[\cite{Cowen}, see also \cite{Canonicalmodel}]\label{Thm:model} Let $\varphi\in\U(\UD)\setminus\{\id_\UD\}$. The following statements hold.
\begin{ourlist}
	\item\label{IT_HM-ell-auto} $\varphi$ is an elliptic automorphism with multiplier $\lambda\in\partial\UD\setminus\{1\}$ if and only if $\varphi$ admits a holomorphic model of the form ${\mathcal M_\varphi:=(\UD,h,z\mapsto \lambda z)}$, where ${h\in\Aut}$.
	\item\label{IT_HM-ell-non-auto} $\varphi$ is an elliptic self-map with multiplier $\lambda\in\UD^*$ (and hence it is not an automorphism) if and only if $\varphi$ admits a holomorphic model of the form ${\mathcal M_\varphi:=(\C,h,z\mapsto \lambda z)}$.
	\item\label{IT_HM-hyp} $\varphi$ is a hyperbolic self-map with multiplier $\lambda\in(0,1)$ if and only if $\varphi$ admits a holomorphic model of the form $\mathcal M_\varphi:=(S_{I},h,z\mapsto z+1)$, where $I=(a,b)$ is a bounded open interval of length ${b-a=\pi/|\log\lambda|}$.
	\item\label{IT_HM-para-PHS} $\varphi$ is a parabolic self-map of positive hyperbolic step if and only if $\varphi$ admits a holomorphic model of the form ${\mathcal M_\varphi:=(S_{I},h,z\mapsto z+1)}$, where $I$ is an open unbounded interval different from the whole~$\Real$.
	\item\label{IT_HM-para-0HS} $\varphi$ is a parabolic self-map of zero hyperbolic step if and only if $\varphi$ admits a holomorphic model of the form ${\mathcal M_\varphi:=(\C,h,z\mapsto z+1)}$.	
\end{ourlist}	
\end{theorem}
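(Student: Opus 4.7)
The plan is to follow Cowen's classical strategy: first build an intertwining map $h$ by solving an appropriate Schröder/Abel equation, then assemble the base space $S$ as the union of the iterated preimages of $h(\UD)$ under $\alpha$, and finally identify the conformal type of $S$ together with the width of~$I$ in terms of the dynamical invariants of~$\varphi$. Uniqueness up to model isomorphism is already granted by Theorem~\ref{Thm:uniqness}, so it suffices to exhibit a model of the stated form for each case and then verify, conversely, that the form of the model determines the type of~$\varphi$.

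For the elliptic cases I would start by moving the Denjoy--Wolff point~$\tau$ to~$0$ via an automorphism of~$\UD$. If $\varphi$ is an elliptic automorphism, then after this conjugation it is a rotation, giving case~\ref{IT_HM-ell-auto} directly. If $\varphi$ is an elliptic non-automorphism with multiplier $\lambda\in\UD^*$, I would apply Koenigs's theorem to get a univalent $h\in\Hol(\UD,\C)$ with $h(0)=0$ and $h\circ\varphi=\lambda h$, and then define the base space by pushing $h(\UD)$ outward under $z\mapsto \lambda^{-1}z$:
\[
S\,:=\,\bigcup_{n\geq 0}\lambda^{-n}\,h(\UD).
\]
Since $h(\UD)$ is a neighborhood of~$0$ and $|\lambda|<1$, this union is all of~$\C$, so (HM2) holds, and the intertwining relation extends $\lambda$-multiplication to an automorphism of $S=\C$, yielding~\ref{IT_HM-ell-non-auto}.

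For the non-elliptic cases I would use the existence of a univalent Abel function $h:\UD\to\C$ solving $h\circ\varphi=h+1$ (Baker--Pommerenke and Pommerenke; also deducible from Cowen's original paper), and then form the direct-limit base space
\[
S\,:=\,\bigcup_{n\geq 0}\bigl(h(\UD)-n\bigr),
\]
equipped with the automorphism $\alpha(w):=w+1$. Conditions (HM1)--(HM2) are then built in. The core of the argument is the identification of the conformal type of $S$: $S$ is a simply connected domain in~$\C$ invariant under integer translations, hence its quotient $S/\Z$ is a Riemann surface biholomorphic to an annulus, a punctured disc, or a punctured plane, and $S$ is the universal cover. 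By the uniformization of such covers, $S$ is conformally either a horizontal strip $S_I$ with $I$ bounded, a horizontal half-plane (i.e. $S_I$ with $I$ unbounded but $I\neq \Real$), or all of~$\C$, which matches the three listed possibilities. To connect each possibility to the dynamical type of~$\varphi$, I would compute the multiplier at the Denjoy--Wolff point from the model: for the strip $S_I$ with $I=(a,b)$ bounded, the map $w\mapsto e^{\pi (w-ia)/(b-a)}$ conjugates $w\mapsto w+1$ to multiplication by a constant $\lambda\in(0,1)$ on the upper half-plane, yielding the relation $b-a=\pi/|\log\lambda|$ and the hyperbolic case~\ref{IT_HM-hyp}; for $S=\C$, translation by~$1$ has multiplier~$1$ at infinity, and the orbits of $\varphi$ are biholomorphic images of the orbits of $w\mapsto w+1$ in~$\C$, which have zero hyperbolic step in the sense of any hyperbolic metric on a horodisc, giving~\ref{IT_HM-para-0HS}; for $S$ a half-plane, the orbits of $w\mapsto w+1$ have positive hyperbolic step in~$S$, giving~\ref{IT_HM-para-PHS}.

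The converse implications are comparatively routine: given a model of the stated form, the location of $\tau$ (inside or on the boundary), the value of $\varphi'(\tau)$, and the hyperbolic step of~$\varphi$ are all invariants that can be computed from $(S,h,\alpha)$ using uniqueness (Theorem~\ref{Thm:uniqness}). The main obstacle I anticipate is the construction of the univalent Abel function~$h$ for parabolic self-maps of positive hyperbolic step, and more delicately the correct identification of the conformal type of the direct-limit space $S$ as strip, half-plane or plane, since these three cases must be distinguished precisely in terms of $\lambda$ and the hyperbolic step; this is where the measurement of the asymptotic behaviour of $\rho_\UD(z_n,z_{n+1})$ in terms of $h$ becomes indispensable.
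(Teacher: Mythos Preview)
The paper does not contain a proof of this theorem: it is stated in the Preliminaries (Section~\ref{S_preliminaries}) as a classical result attributed to Cowen~\cite{Cowen} and Arosio--Bracci~\cite{Canonicalmodel}, and the authors explicitly refer the reader to the monographs~\cite{Abate2,BCD-Book} for proofs of the results in that section. So there is no ``paper's own proof'' to compare your proposal against.

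That said, your outline is a faithful sketch of the classical construction (build the intertwining map via K\"onigs/Abel, form the direct-limit base space, identify its conformal type). A couple of points would need tightening if this were to become a self-contained proof. First, your argument that $S=\C$ implies zero hyperbolic step is not quite right as stated: $\C$ carries no hyperbolic metric, so ``zero hyperbolic step in the sense of any hyperbolic metric on a horodisc'' does not parse; the actual argument compares $\rho_\UD(z_n,z_{n+1})$ with $\rho_{\Omega_n}(h(z_n),h(z_{n+1}))$ for an exhausting family $\Omega_n\nearrow\C$ and uses that the latter tends to zero. Second, the converse direction (positive hyperbolic step forces $S$ to be a proper subdomain, i.e.\ a strip or half-plane) is where the real work lies, and your sketch correctly flags this as the delicate point but does not indicate how it is carried out; in the literature this is exactly the content of Pommerenke~\cite{Pom79} and Baker--Pommerenke~\cite{BakerPommerenke}. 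Finally, the identification of $S/\Z$ with an annulus, punctured disc, or $\C^*$ is correct in spirit, but one must also check that the translation $w\mapsto w+1$ acts freely and properly discontinuously on~$S$ and that $S$ is simply connected, which follows from the construction but should be said.
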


\begin{remark}\label{RM_normalization}
In the above theorem, we may assume that:
\begin{itemize}
 \item[-]in case~\ref{IT_HM-ell-auto}, $h'(\tau)>0$, where $\tau$ is the Denjoy\,--\,Wolff point of $\varphi$;
 \item[-]in case~\ref{IT_HM-ell-non-auto}, $h'(\tau)=1$, where $\tau$ is the Denjoy\,--\,Wolff point of $\varphi$;
 \item[-]in cases~\ref{IT_HM-hyp} and~\ref{IT_HM-para-0HS}, $h(0)=0$;
 \item[-]in case~\ref{IT_HM-para-PHS}, $\Re h(0)=0$ and ${S_I=S_{(0,+\infty)}=\UH}$ or ${S_I=S_{(-\infty,0)}=-\UH}$.
\end{itemize}
Using the uniqueness part of Theorem~\ref{Thm:uniqness}, one can show (see e.g. \cite[Corollary 4.6.12]{Abate2} for details) that the above assumptions play the role of a normalization under which the holomorphic model $\mathcal M_\varphi$ for a given $\varphi\in\U(\UD)\setminus\{\id_\UD\}$ is unique. Note that the normalization for cases \ref{IT_HM-hyp} and~\ref{IT_HM-para-0HS} would also work in case~\ref{IT_HM-para-PHS}, but we prefer to use another normalization, so that for parabolic self-maps of positive hyperbolic step,  the base space~$S_I$ of~$\mathcal M_\varphi$ coincides with $\UH$ or~$-\UH$. Moreover, replacing, if necessary, $\varphi$ with $z\mapsto\overline{\varphi(\bar z)}$ we may assume that ${S_I=\UH}$.
\end{remark}

\begin{definition}\label{DF_canonical}
 The unique holomorphic model $\mathcal M_\varphi$ of a self-map  $\varphi\in\U(\UD)\setminus\{\id_\UD\}$ defined in Theorem~\ref{Thm:model} and normalized as in Remark~\ref{RM_normalization} is called \dff{canonical (holomorphic) model} for~$\varphi$. The intertwining map~$h$ of the canonical model~$\mathcal M_\varphi$ is called the \dff{Koenigs function}, and ${\Omega:=h(\UD)}$ is called the \textit{Koenigs domain} of~$\varphi$.
\end{definition}

\begin{remark}\label{RM_absorption-continuous}
Let $\varphi\in\U(\UD)$ be a non-elliptic self-map with the canonical model ${(S,h,z\mapsto z+1)}$.  Bearing  in mind Remark~\ref{RM_invariance_wrt_alpha}, it is easy to see that thanks to the absorption property~(HM2),  for any compact set ${K\subset S}$ there exists ${n_K\in\Natural}$ such that for all ${n\in\Natural}$ with ${n\ge n_K}$ we have ${K+n\subset\Omega}:=h(\UD)$. Furthermore, if ${w\in S}$ then ${K(w):=\{w+s:s\in[0,1]\}}$ is a compact subset of~$S$ and we can easily conclude that ${w+t\in \Omega}$ for all ${t\in\Real}$ with ${t\ge n_{K(w)}}$. In particular, it follows that for any $b>0$ the following ``generalized absorption property'' holds:
$$
  \bigcup_{n\in\Natural}\Omega-nb~=~S.
$$
\end{remark}

\subsection{One-parameter semigroups in the unit disc and embeddability problem}\label{SS_one-param-semigr} Semigroups of holomorphic functions in  the unit disc $\D$ have been a subject of study since the early 1900s. In~1978, Berkson and Porta \cite{BP} studied continuous semigroups of holomorphic self-maps of the unit disc in connection with composition operators. This paper meant the resurgence of this area. The current state of the art is presented in the monograph~\cite{BCD-Book}; see also~\cite{EliShobook10}.
We recall the definition straightaway.
\begin{definition} \label{def:semigroup}
We say that a family $(\phi_t)_{t\geq0}$ (or to simplify,  $(\phi_t)$)
of holomorphic functions ${\phi_t:\D\to \D}$ is a \dff{one-parameter semigroup} if it verifies the following two algebraic properties:
\begin{itemize}
    \item[(i)] $\phi_0=\id_{\D}$;
    \item[(ii)] $\phi_t\circ\phi_s=\phi_{t+s}~$ for every $t,s\geq0$.
\end{itemize}
If, in addition, $\phi_t\to\phi_0$ uniformly on compact subsets of $\D$, as $t\to0^+$, we say that the one-parameter semigroup $(\phi_t)$ is \dff{continuous}.
\end{definition}

\smallskip\noindent{\bf Convention.}
From now on, unless explicitly indicated otherwise, by a continuous one-parameter semigroup we will mean a \textit{non-trivial} one, i.e. containing at least one element different from the identity map.
\smallskip

It is worth recalling that all elements of any continuous one-parameter  semigroup are univalent functions (see, e.g., \cite[Theorem~8.1.17]{BCD-Book}). Moreover, all of them, except for the identity map, have the same Denjoy\,--\,Wolff point, so one can talk about elliptic and non-elliptic continuous one-parameter semigroups (see, e.g., \cite[Theorem~8.3.1]{BCD-Book}).

Given a continuous one-parameter semigroup $(\phi_{t})$, it is possible to show that all the functions of the semigroup \textit{essentially} share their canonical model.
Indeed,  for non-elliptic semigroups, if $(S,h,z\mapsto z+1)$ is the canonical holomorphic model for $\phi_{1}$ given in Theorem~\ref{Thm:model}, then the triple $\big(S,h,(z\mapsto z+t)_{t\ge0}\big)$ is the canonical holomorphic model for~$(\phi_t)$ (see~\cite[Theorem~9.3.5]{BCD-Book} for the precise definition and further details). In particular, we have that
\begin{equation}\label{EQ_Abel-eq-for-semigroup}
h\circ \phi_{t}=h+t\qquad\text{for all $~t\geq 0$.}
\end{equation}
It follows that $\phi_t=h^{-1}\circ(h+t)$, we can differentiate this equality w.r.t.~$t$ and get ${\di\phi_t/\di t=G\circ\phi_t}$, where $G:=1/h'$ is called the \dff{infinitesimal generator} of~$(\phi_t)$.

The function $h$ and the domain $\Omega:=h(\UD)$ defined above are called the \dff{Koenigs function} and \dff{Koenigs domain} of the non-elliptic semigroup~$(\phi_t)$.

In a similar way, one can introduce the infinitesimal generator for an elliptic continuous one-parameter semigroup (see, e.g., \cite[Theorem~10.1.4\,(2)]{BCD-Book}). A detailed description of the properties of infinitesimal generators can be found in \cite[Chapter~10]{BCD-Book}.

\begin{definition}
A holomorphic self-map $\varphi:\UD\to\UD$ is said to be \dff{embeddable}, if there exists a continuous one-parameter semigroup $(\phi_t)$ such that ${\phi_1=\varphi}$.
\end{definition}

Note that in the above definition, the trivial semigroup $(\phi_t)$ with ${\phi_t=\id_\UD}$ for all ${t\ge0}$ is allowed, so that the identity map is embeddable.

As we mentioned above, all elements of an non-elliptic one-parameter semigroup $(\phi_t)$ can be expressed using the Koenigs function~$h$ of the self-map~$\phi_1$ via the formula $\phi_t={h^{-1}\circ(h+t)}$. As an immediate consequence we get the following:

\begin{corollary} \label{CR_uniquesemigroup}
Let $\varphi\in\U(\UD)$ be a non-elliptic self-map. Then there is at most one continuous one-parameter semigroup $(\phi_{t})$ such that $\phi_{1}=\varphi$.
\end{corollary}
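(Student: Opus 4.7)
The plan is a direct appeal to the canonical-model description of non-elliptic continuous one-parameter semigroups recalled in the paragraph immediately preceding the corollary. Suppose $(\phi_t)$ and $(\widetilde{\phi}_t)$ are two continuous one-parameter semigroups in $\U(\UD)$ with $\phi_1=\widetilde{\phi}_1=\varphi$. Since $\varphi$ is non-elliptic, in particular $\varphi\neq\id_\UD$, so both semigroups are non-trivial in the sense of the Convention introduced in Subsection~\ref{SS_one-param-semigr}. Using the standard fact that every non-identity element of a non-trivial continuous one-parameter semigroup shares the same Denjoy\,--\,Wolff point (which must coincide with that of $\phi_1=\varphi$, hence lies on $\partial\UD$), we conclude that $(\phi_t)$ and $(\widetilde{\phi}_t)$ are non-elliptic semigroups.

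Next, let $\mathcal M_\varphi=(S,h,z\mapsto z+1)$ be the canonical holomorphic model of $\varphi$ provided by Theorem~\ref{Thm:model} and normalized as in Remark~\ref{RM_normalization}. By the quoted result on the canonical model of a non-elliptic continuous semigroup (the extension of $\mathcal M_{\phi_1}$ to $\big(S,h,(z\mapsto z+t)_{t\ge0}\big)$), Abel's equation \eqref{EQ_Abel-eq-for-semigroup} holds for $(\phi_t)$. Applying the same statement to $(\widetilde{\phi}_t)$, and using the uniqueness of the normalized canonical model of $\varphi=\widetilde{\phi}_1$ (so that exactly the same triple $(S,h,z\mapsto z+1)$ appears), we obtain
\[
 h\circ\phi_t=h+t=h\circ\widetilde{\phi}_t\qquad\text{for every }t\ge0.
\]
Since $h$ is univalent on $\UD$, this forces $\phi_t=\widetilde{\phi}_t$ for all $t\ge0$, i.e. $\phi_t=h^{-1}\circ(h+t)$ is the unique possible embedding.

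There is essentially no obstacle: the content of the corollary is built into the canonical-model machinery already set up in the Preliminaries, and the only point that deserves an explicit sentence is the reduction to the non-elliptic-semigroup case, which follows from $\varphi\neq\id_\UD$ together with the shared Denjoy\,--\,Wolff point of non-identity elements of a non-trivial continuous semigroup.
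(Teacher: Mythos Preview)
Your argument is correct and is essentially the same as the paper's: the paper simply observes that for a non-elliptic continuous one-parameter semigroup one has $\phi_t=h^{-1}\circ(h+t)$ with $h$ the Koenigs function of $\phi_1=\varphi$, and states the corollary as an immediate consequence. Your only addition is the explicit sentence reducing to the non-elliptic-semigroup case, which the paper leaves implicit.
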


\begin{remark}\label{RM_emb-ty_via_K-dom}
The fact that the Koenigs map of $\phi_1$ satisfies Abel's equation~\eqref{EQ_Abel-eq-for-semigroup} for the whole semigroup~$(\phi_t)$ has another remarkable corollary. Namely, we have  the following criterion of embeddability: \textit{a non-elliptic self-map $\varphi\in\U(\UD)$ is embeddable if and only if
\begin{equation}\label{EQ_starlike-at-infty}
  \Omega+t\subset\Omega\quad\text{for any~$~t\ge0$},
\end{equation}
where $\Omega$ stands for the Koenigs domain of~$\varphi$.}

This criterion is slightly stronger than the necessary and sufficient condition in~\cite[Theorem~2]{Goryainov-et-al}, which in view of the analytic characterization of hyperbolic simply connected domains~$\Omega$ satisfying~\eqref{EQ_starlike-at-infty}, see e.g. \cite[Sect.\,6]{HengSch1970}, can be stated as follows: a non-elliptic self-map $\varphi$ is embeddable if and only if Abel's equation ${g\circ\varphi}={g+1}$ admits a univalent solution whose image ${\Omega:=g(\UD)}$ satisfies~\eqref{EQ_starlike-at-infty}. Note that in general, Abel's equation can have univalent solutions which are \textit{not} of the form $g=h+c$, where $h$ is the Koenings function of~$\varphi$ and $c\in\C$ is a constant.
\end{remark}

\begin{definition}
A domain $\Omega\subset\C$ satisfying~\eqref{EQ_starlike-at-infty} is said to be \dff{starlike at infinity}.
\end{definition}

It is easy to see that if $\Omega$ is starlike at infinity, then it is simply connected. If, in addition, ${\Omega\neq\C}$, then there exists a non-elliptic continuous one-parameter semigroup for which $\Omega$, up to a translation, is the Koenigs domain. (This semigroup is unique up to conjugating by disc automorphisms.) We will use this fact for constructing examples in Sect.\,\ref{S_examples}.

\section{First results on the centralizer}\label{S_first-results}
Fix $\varphi\in\U(D)$, where $D$ is a domain of $\C$. The \dff{centralizer} of~$\varphi$ is defined as
$$
\Zen_D(\varphi):=\{\psi\in\U(D):\varphi\circ\psi=\psi\circ\varphi\}.
$$
Note that $\Zen_{D}(\varphi)$ is a subsemigroup of~$\U(D)$. In case $D=\UD$ and as long as it does not lead to any confusion, we will drop the subscript and write simply $\Zen(\varphi)$.

Sometimes we consider self-maps $g$ of a Riemann  surface $S$. In such cases, $\U(S)$, the centralizer $\Zen_S(g)$ and the iterates of $g$ are defined in a similar way.

Based on the canonical holomorphic model, an early harvest is the following theorem, in which we construct a continuous injective homomorphism from a subsemigroup~$\mathcal A_\varphi$ of $[\C,+]$ to the centralizer $[\Zen(\varphi),\,\circ\,]$. Later we will show that for hyperbolic self-maps~$\varphi$, as well as for parabolic self-maps of zero hyperbolic step, this homomorphism is actually an isomorphism of topological semigroups; see Theorems~\ref{Thm:0HS} and~\ref{TH_isomorphism}. This is however not the case in general for parabolic self-maps of \textit{positive} hyperbolic step, see Remark~\ref{RM_examples-not-iso}.

\begin{theorem} \label{PropA_varphi}
Let $\varphi \in \U(\UD)$ a non-elliptic self-map and $(S,h,z\mapsto z+1)$ its canonical holomorphic model.  Write $\Omega:=h(\D)$ and take $\mathcal A_{\varphi}:=\{b\in\C:\Omega+b\subset\Omega\}$. Then:
\begin{Ourlist}
\item\label{IT_closed-subsemigroup}  $\mathcal A_{\varphi}$ is a closed additive subsemigroup of $[\C,+]$ containing~$\N_0$.
\item\label{PropA_notauto} If $\varphi$ is not an automorphism, then $\mathcal A_{\varphi}\cap (-\infty,0)=\emptyset$.
\item\label{IT_homomorphism} For each $b\in \mathcal A_{\varphi}$, the function $\psi_{b}(z):=h^{-1}(h(z)+b)$ is well-defined in~$\UD$ and belongs to $\mathcal Z(\varphi)$. The map
     $$
      \Tmap_{\varphi}:\mathcal A_{\varphi} \to  \mathcal Z(\varphi);\quad b\mapsto \Tmap_\varphi(b):=\psi_{b},
     $$
     is injective, continuous, and satisfies $\psi_{b+c}=\psi_{b}\circ \psi_{c}$ for any $b,c\in \mathcal A_\varphi$.
\item\label{IT_Tmap-closed} Moreover, the map $\Tmap_\varphi$ is closed, i.e. for any closed set $X\subset\mathcal A_\varphi$ the image $\Tmap(X)$ is relatively closed in~$\Hol(\UD)$.
\end{Ourlist}
\end{theorem}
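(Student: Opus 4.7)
My plan is to derive all four parts from the conjugacy relation $h\circ\varphi = h+1$, the univalence of $h$, the openness of $\Omega := h(\UD)$, and crucially the generalized absorption property $\bigcup_{n\in\N}(\Omega - nb) = S$ valid for every $b>0$ (Remark~\ref{RM_absorption-continuous}). Parts (A), (B) and (C) will be essentially formal; the main difficulty lies in part (D), where I will have to rule out degenerate limits of a locally uniformly convergent sequence of maps $\psi_{b_n}$.

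For part~(A), the inclusion $\alpha(\Omega) = h(\varphi(\UD))\subset\Omega$ from Remark~\ref{RM_invariance_wrt_alpha} gives $1\in\mathcal A_\varphi$ and hence $\N_0\subset\mathcal A_\varphi$ by iteration; the subsemigroup property is immediate from $(\Omega+b)+c\subset\Omega+c\subset\Omega$. For closedness in $\C$, I plan to use the following substitution trick: given $b_n\to b$ with $b_n\in\mathcal A_\varphi$ and an arbitrary $w_0\in\Omega$, choose $r>0$ so that the closed disc of radius $r$ about $w_0$ lies in the open set $\Omega$; then for $n$ large enough that $|b_n-b|<r$ the point $w_0-(b_n-b)$ belongs to $\Omega$, and consequently $w_0+b = (w_0-(b_n-b))+b_n \in \Omega+b_n \subset \Omega$. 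For part~(B), if $-t\in\mathcal A_\varphi$ with $t>0$, the subsemigroup property yields $\Omega-nt\subset\Omega$ for every $n\in\N$, and the generalized absorption property applied with $b:=t$ forces $S\subset\Omega$; this turns $h:\UD\to S$ into a biholomorphism and $\varphi = h^{-1}\circ\alpha\circ h$ into an automorphism of $\UD$, contradicting the hypothesis.

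For part~(C), the inclusion in the definition of $\mathcal A_\varphi$ makes $\psi_b:=h^{-1}\circ(h+b)$ well-defined on $\UD$, and it is univalent as a composition of univalent maps. The identities $h\circ\varphi\circ\psi_b = h+b+1 = h\circ\psi_b\circ\varphi$ and $h\circ\psi_{b+c} = h+b+c = h\circ\psi_b\circ\psi_c$ combined with the univalence of $h$ yield simultaneously $\psi_b\in\Zen(\varphi)$ and the homomorphism identity; injectivity of $\Tmap_\varphi$ is immediate from $h\circ\psi_b = h+b$. For continuity, if $b_n\to b$ in $\mathcal A_\varphi$ and $K\subset\UD$ is compact, then $h(K)+b$ is compact inside the open set $\Omega$, the sets $h(K)+b_n$ stay in a fixed compact neighborhood of $h(K)+b$ in $\Omega$ for $n$ large, and uniform continuity of $h^{-1}$ there produces $\psi_{b_n}\to\psi_b$ uniformly on $K$.

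The main obstacle is part~(D). Suppose $X\subset\mathcal A_\varphi$ is relatively closed, $(b_n)\subset X$, and $\psi_{b_n}\to\psi$ locally uniformly with $\psi\in\Hol(\UD)$; I must identify $\psi$ with $\psi_b$ for some $b\in X$. Since each $\psi_{b_n}$ is univalent, Hurwitz's theorem leaves only two possibilities for $\psi$: either $\psi\in\U(\UD)$ or $\psi$ is a constant. The built-in assumption $\psi(\UD)\subset\UD$ rules out constants on $\partial\UD$, while an interior constant $\psi\equiv\zeta\in\UD$ is excluded by passing to the limit in $h\circ\psi_{b_n} = h+b_n$ and using continuity of $h$ at $\zeta$, which would otherwise force $h$ itself to be constant. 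Once $\psi\in\U(\UD)$ is secured, for any fixed $z_0\in\UD$ the identity $b_n = h(\psi_{b_n}(z_0)) - h(z_0)$ gives
\[
 b_n \longrightarrow h\bigl(\psi(z_0)\bigr) - h(z_0) =: b \qquad\text{in $\C$};
\]
closedness of $\mathcal A_\varphi$ from~(A) and relative closedness of $X$ in $\mathcal A_\varphi$ then place $b$ in $X$, while taking limits in $h\circ\psi_{b_n} = h+b_n$ identifies $\psi = \psi_b = \Tmap_\varphi(b)$. The subtle point, and the reason (D) is the hardest of the four parts, is that a priori a locally uniformly convergent sequence of univalent self-maps of $\UD$ could degenerate to a boundary or to an interior constant; the hypothesis $\psi\in\Hol(\UD)$ combined with the Abel-type equation satisfied by each $\psi_{b_n}$ is exactly what prevents this.
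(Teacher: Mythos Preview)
Your proof is correct. Parts~(A)--(C) follow essentially the same lines as the paper (for~(B) you invoke Remark~\ref{RM_absorption-continuous} rather than reproving the half-strip inclusion, which is fine; note that when $S=\C$ the equality $\Omega=S$ is already impossible since $h(\UD)\neq\C$, so the contradiction arrives one step earlier than you state).

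For part~(D) your argument differs from the paper's. The paper writes $X=\bigcup_n X_n$ with $X_n:=X\cap\{n-1\le|z|\le n\}$, observes that each $\Tmap_\varphi(X_n)$ is compact by continuity, and then shows the family $\big(\Tmap_\varphi(X_n)\big)_n$ is locally finite in $\Hol(\UD)$: if $b_n\in X_n$ and $\Tmap_\varphi(b_n)$ converged, passing to the limit in $h\circ\Tmap_\varphi(b_n)=h+b_n$ would contradict $b_n\to\infty$. You instead take an arbitrary convergent sequence $\psi_{b_n}\to\psi\in\Hol(\UD)$, use Hurwitz to reduce to the univalent/constant dichotomy, rule out interior constants via the Abel equation (a constant limit would force $h$ to be constant), and then recover $b_n\to b\in X$ from $b_n=h(\psi_{b_n}(z_0))-h(z_0)$. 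Both routes pivot on the same identity $h\circ\psi_b=h+b$; the paper's decomposition is slightly more structural, while your direct limit analysis is a bit more self-contained and makes the role of Hurwitz explicit.
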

\StepP{\ref{IT_closed-subsemigroup}} Clearly, $0\in\mathcal A_\varphi$ and if $b_1,b_2\in\mathcal A_\varphi$, then ${b_1+b_2\in\mathcal A_\varphi}$. Moreover, $1\in\mathcal A_\varphi$ because ${\Omega+1=h(\UD)+1=h(\varphi(\UD))}$.
It remains to see that the set $\mathcal A_\varphi$ is topologically closed. Let $b_0\in\C\setminus\mathcal A_\varphi$.
Then there exists $z_0\in\Omega$ such that $z_0+b_0\not\in\Omega$. Since $\Omega$ is open, there exists ${\varepsilon>0}$ such that $z_0+\zeta\in\Omega$ whenever $|\zeta|<\varepsilon$. It follows that $b:=b_0-\zeta\not\in\mathcal A_\varphi$ for any such $\zeta$ because ${z_0+\zeta+b=z_0+b_0\not\in\Omega}$ while ${z_0+\zeta\in\Omega}$. Hence, $\mathcal A_\varphi$ is a closed set.

\StepP{\ref{PropA_notauto}}
 Choose arbitrary $a_1\le a_2$ such that ${U:=\{w:a_1\le \Im w\le a_2\}\subset S}$. Consider the compact set $${K:=\{w\in U:0\le \Re w\le 2\}\subset S}.$$ The absorption property~(HM2) implies that there is a natural number $m$ such that $K+m\subset \Omega$. Since ${\Omega+1\subset\Omega}$, it follows that
\begin{equation}\label{EQ_half-strip-in-Omega}
 \{w\in U:\, m\le \Re w\}~=~\bigcup_{n=m}^{+\infty}K+n~\subset~\Omega.
\end{equation}

If $\mathcal A_{\varphi}$ contained an element $b<0$, then using~\eqref{EQ_half-strip-in-Omega} and taking into account that $kb\in  \mathcal A_{\varphi}$ for all ${k\in \N}$, we could deduce that ${U\subset\Omega}$.
Since the closed horizontal strip $U\subset S$ in this argument is arbitrary, this would further imply that ${S\subset \Omega=h(\UD)\subset S}$, which is possible only if~$S\neq\C$ and ${\varphi\in\Aut}$.

\medskip\noindent The \textsc{proof of~\ref{IT_homomorphism}} is straightforward and it is, therefore, omitted.

\StepP{\ref{IT_Tmap-closed}} Let $X\subset\mathcal A_\varphi$ be a closed subset. If $X$ is bounded, then $\Tmap_\varphi(X)$ is compact and hence closed, because $\Tmap_\varphi$ is continuous. To show that $\Tmap_\varphi(X)$ is closed in~$\Hol(\UD)$ even if $X$ is unbounded, we write $$X=\bigcup_{n\in\Natural} X_n,\quad X_n:=X\cap\{z:n-1\le |z|\le n\}.$$
Since $\Tmap_\varphi(X_n)$ are all closed, it is sufficient to check that the family $\big(\Tmap_\varphi(X_n)\big)_{n\in\Natural}$ is locally finite in~$\Hol(\UD)$. Suppose on the contrary that there exists a sequence ${(b_n)\subset \C}$ with ${b_n\in X_n}$ for each~${n\in\Natural}$ such that the sequence $\big(\Tmap_\varphi(b_n)\big)$ converges to an element of $\Hol(\UD)$. Passing to the limit in the equality ${h\circ\Tmap_\varphi(b_n)=h+b_n}$, we arrive to a contradiction because ${b_n\to\infty}$ as ${n\to+\infty}$ while the left hand side converges to a holomorphic function. The proof is now complete.
\qed

\newcommand{\thprop}{Theorem~\ref{PropA_varphi}}

\begin{remark}\label{RM_A_varphi}
The absorption property (HM2) in the definition of a holomorphic model implies that if ${\Omega+b\subset\Omega}$, then ${S+b\subset S}$. It follows that ${\mathcal A_\varphi\subset\Real}$ if $\varphi$ is hyperbolic, and that $\mathcal A_\varphi$ is contained in one of the half-planes bounded by~$\Real$ if $\varphi$ is parabolic of positive hyperbolic step.
\end{remark}

The semigroup $[\mathcal A_{\varphi},+]$  introduced in Theorem~\ref{PropA_varphi} will play a significant role in the study of the centralizer. As we have already mentioned, $\mathcal A_{\varphi}$ and $\Zen(\varphi)$ are isomorphic as topological semigroups (and, in particular, $\Zen(\varphi)={\{\psi_{b}:\, b\in \mathcal A_{\varphi}\}}$)\, if $\varphi$ is a hyperbolic self-map or a parabolic self-map of zero hyperbolic step, but not necessarily in case of parabolic self-maps of positive hyperbolic step. As it becomes clear with the help of our next theorem (applied to the canonical model of~$\varphi$) the reason behind this difference is that if $S$ is either a horizontal strip or the whole plane~$\C$ and if ${g:S\to S}$ is a univalent holomorphic map that commutes with ${z\mapsto z+1}$, then $g$ is an automorphism of~$S$ of the form ${g(z):=z+b}$, where ${b\in\C}$ is a constant.  For the case of a horizontal strip, the latter statement follows from  \cite[Lemma~2.1]{Heins}, while for ${S=\C}$ it is a consequence of the elementary fact that an entire function can be univalent in the whole plane only if it is affine. The situation is more complicated if $S$ is a horizontal half-plane: indeed, there are plenty of univalent holomorphic mappings~$g$ from a half-plane onto its proper subsets satisfying the identity ${g(z+1)=g(z)+1}$; see e.g. Lemma~\ref{Lem:intertwining-exp}\,\ref{IT_periodic-univalent-converse}.

From an abstract point view, the centralizer of a univalent self-map admits a description in terms of holomorphic models as follows.
\begin{remark}\label{RM_relation-to-uniqueness_of-the-model} Let $(S,h,\alpha)$ be a holomorphic model of $\varphi\in\U(\UD)$. Then there is a bijective relationship between $\Zen(\varphi)$ and  the family of all holomorphic models $\mathcal M:=(\tilde S,\tilde h,\widetilde\alpha)$ for $\varphi$ such that ${\tilde S\subset S}$, ${\tilde h(\UD)\subset h(\UD)}$ and $\widetilde\alpha=\alpha|_{\tilde S}$.

Indeed, given  $\psi\in \Zen(\varphi)$, then the univalent function ${h_\psi:=h\circ\psi}$ satisfies the functional equation
\begin{equation}\label{EQ_func-eq-for-h_psi}
 h_\psi\circ\varphi~=~\alpha\circ h_\psi.
\end{equation}
Moreover, $\alpha$ maps ${\Omega_\psi:=h_\psi(\UD)}$ into itself. Indeed, thanks to~\eqref{EQ_func-eq-for-h_psi}, we have
$$
\alpha(\Omega_\psi)=\alpha\big(h_\psi(\UD)\big)=h_\psi\big(\varphi(\UD)\big)\subset\Omega_\psi.
$$
It follows that the restriction of $\alpha$ to $S_\psi:={\bigcup_{n\geq0}\alpha^{\circ-n}(\Omega_\psi)}$ is an automorphism of~$S_\psi$ and that the domain $\Omega_\psi$ is an absorbing set for ${\alpha|_{S_\psi}:S_\psi\to S_\psi}$. Thus, ${(\tilde S,\tilde h,\widetilde\alpha)}=\mathcal M_\psi:={(S_\psi,h_\psi,\alpha|_{S_\psi})}$ is a holomorphic model for~$\varphi$. Clearly, ${S_\psi\subset S}$ and ${h_\psi(\UD)\subset h(\UD)}$.

 Conversely, if for some ${\tilde S\subset S}$ and some univalent function ${\tilde h:\UD\to h(\UD)}$  the triple $\mathcal M:={(\tilde S,\tilde h,\alpha|_{\tilde S})}$ is a holomorphic model for~$\varphi$, then  $\psi:=h^{-1}\circ \tilde h\in\U(\UD)$ and
$$
  \psi\circ \varphi=h^{-1}\circ\big(\tilde h\circ \varphi\big)=h^{-1}\circ\big(\alpha|_{\tilde S}\circ \tilde h\big)=h^{-1}\circ\big(\alpha\circ h\circ \psi\big)=\big(h^{-1}\circ\alpha\circ h\big)\circ \psi=\varphi\circ \psi,
$$
i.e. $\psi\in \Zen(\varphi)$, with $\mathcal M_\psi=\mathcal M$.	
\end{remark}

The above remark together with the uniqueness of the holomorphic model implies a more explicit description of~$\Zen(\varphi)$.

\begin{theorem} \label{Thm:general} Let $(S,h,\alpha)$ be a holomorphic model of some $\varphi\in\U(\UD)$ and denote $\Omega:=h(\D)$. Then
	$$
	\Zen(\varphi)=\{h^{-1}\circ g\circ h: g\in\Zen_{S}(\alpha),\ g(\Omega)\subset\Omega\}.
	$$	
\end{theorem}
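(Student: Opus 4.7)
The plan is to prove both set inclusions separately, following the bijective correspondence sketched in Remark~\ref{RM_relation-to-uniqueness_of-the-model}.

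For the inclusion ``$\supseteq$'', I would fix $g\in\Zen_S(\alpha)$ with $g(\Omega)\subset\Omega$, set $\psi:=h^{-1}\circ g\circ h$, and verify directly that $\psi\in\Zen(\varphi)$. Well-definedness and univalence of $\psi$ follow from $g(\Omega)\subset\Omega=h(\UD)$ and the injectivity of~$h$. Commutativity $\varphi\circ\psi=\psi\circ\varphi$ reduces, via the identity $\varphi=h^{-1}\circ\alpha\circ h$ on $\UD$ (meaningful by Remark~\ref{RM_invariance_wrt_alpha}), to the relation $\alpha\circ g=g\circ\alpha$ on $\Omega\subset S$, which holds because $g\in\Zen_S(\alpha)$.

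The substantive direction is ``$\subseteq$''. Given $\psi\in\Zen(\varphi)$, I would set $h_1:=h\circ\psi$ and observe that
\[
h_1\circ\varphi \;=\; h\circ\psi\circ\varphi \;=\; h\circ\varphi\circ\psi \;=\; \alpha\circ h\circ\psi \;=\; \alpha\circ h_1,
\]
so $h_1$ intertwines $\varphi$ with $\alpha$ precisely as in the hypothesis of Remark~\ref{RM_factorization}. That remark produces an $\eta\in\Hol(S,S)$ extending $h\circ\psi\circ h^{-1}:\Omega\to\Omega$ and satisfying $h\circ\psi=\eta\circ h$. This $\eta$ will be my candidate for~$g$. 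It remains to verify three things: (a) $\eta(\Omega)\subset\Omega$, (b) $\eta\in\U(S)$, and (c) $\eta\circ\alpha=\alpha\circ\eta$ on $S$; once these are in hand, $\psi=h^{-1}\circ\eta\circ h$ completes the proof.

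Claim~(a) is immediate from $\eta|_\Omega=h\circ\psi\circ h^{-1}$ and $\psi(\UD)\subset\UD$. For~(c), the chain
\[
\eta\circ\alpha\circ h \;=\; \eta\circ h\circ\varphi \;=\; h\circ\psi\circ\varphi \;=\; h\circ\varphi\circ\psi \;=\; \alpha\circ h\circ\psi \;=\; \alpha\circ\eta\circ h
\]
combined with the univalence of $h$ yields commutation of $\eta$ and $\alpha$ on $\Omega$, and the identity principle on the connected Riemann surface $S$ propagates it globally. For~(b), if $\eta(w_1)=\eta(w_2)$, I would choose $n\in\N$ with $\alpha^{\circ n}(w_1),\alpha^{\circ n}(w_2)\in\Omega$ (possible by~(HM2)) and apply $\alpha^{\circ n}$ to both sides; using~(c) this becomes $\eta(\alpha^{\circ n}(w_1))=\eta(\alpha^{\circ n}(w_2))$ inside $\Omega$, where $\eta$ restricts to the univalent composition $h\circ\psi\circ h^{-1}$, and injectivity of the automorphism~$\alpha$ then forces $w_1=w_2$. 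I do not anticipate a serious obstacle here: the whole argument is a careful unpacking of Remarks~\ref{RM_relation-to-uniqueness_of-the-model} and~\ref{RM_factorization}, with the absorption property~(HM2) doing the routine lifting from $\Omega$ to all of~$S$.
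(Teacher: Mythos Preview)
Your proof is correct and follows essentially the same route as the paper, with one minor variation worth noting. For the inclusion ``$\subseteq$'', the paper invokes Theorem~\ref{Thm:uniqness} directly: since $(S_\psi,h\circ\psi,\alpha|_{S_\psi})$ is itself a holomorphic model for~$\varphi$ (Remark~\ref{RM_relation-to-uniqueness_of-the-model}), the model isomorphism with $(S,h,\alpha)$ yields a biholomorphism $g:S\to S_\psi\subset S$ satisfying $h\circ\psi=g\circ h$ and $g\circ\alpha=\alpha\circ g$, so univalence of~$g$ comes for free. You instead appeal to the weaker factorization property of Remark~\ref{RM_factorization}, which only gives $\eta\in\Hol(S,S)$, and then verify injectivity by hand via the absorption property~(HM2). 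Both arguments are valid; the paper's is slightly slicker, while yours is more self-contained in that it does not require the full strength of the uniqueness theorem.
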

\begin{proof} Let $g\in\Zen_{S}(\alpha)$ and suppose that $g(\Omega)\subset\Omega$. Then $(\tilde S,\tilde h,\alpha|_{\tilde S})$, where ${\tilde S:=g(S)}$ and ${\tilde h:=g\circ h}$, is a holomorphic model for~$\varphi$ satisfying ${\tilde S\subset S}$ and ${\tilde h(\UD)\subset h(\UD)}$. Therefore, by Remark~\ref{RM_relation-to-uniqueness_of-the-model}, $\psi:={h^{-1}\circ \tilde h=h^{-1}\circ g\circ h\in\U(\UD)}.$

Conversely, given $\psi\in\Zen_\D(\varphi)$,  consider the corresponding holomorphic model $\mathcal M_\psi={(S_\psi,\,h_\psi:=h\circ\psi,\,\alpha|_{S_\psi})}$ defined in Remark~\ref{RM_relation-to-uniqueness_of-the-model}. By Theorem~\ref{Thm:uniqness}, there exists a conformal map~$g$ of $S$ onto~${\tilde S\subset S}$ such that ${\alpha|_{S_\psi}\circ g}={g\circ \alpha}$ and ${h\circ\psi}={g\circ h}$. The former equality means that $g\in\Zen_{S}(\alpha)$, while the latter one implies that $g(\Omega)=h\big(\psi(\UD)\big)\subset \Omega$ and that $\psi={h^{-1}\circ g\circ h}$.
\end{proof}

Below, roughly speaking, we show that $\Zen(\varphi)$ is a relatively closed subset of~$\U(\UD)$. More precisely, we will now prove the following statement.

\begin{theorem} \label{Thm:general2} Let $\varphi\in\U(\UD)\setminus\{\id_\UD\}$  and let $(\psi_k)\subset\Zen(\varphi)$ be a sequence converging locally uniformly in~$\UD$ to some holomorphic function ${\psi:\UD\to\C}$.
\begin{Ourlist}
	\item\label{IT_2_not_auto} If $\varphi$ is not a hyperbolic automorphism, then either $\psi\in\Zen(\varphi)$ or ${\psi\equiv\tau}$, where $\tau$ is the Denjoy\,--\,Wolff point of~$\varphi$.
\item\label{IT_2_auto} If $\varphi$ is a hyperbolic automorphism, then every $\psi_k$ is an automorphism, and either ${\psi\in\Zen(\varphi)}$ or ${\psi\equiv\sigma}$, where ${\sigma\in\UC}$ is one of the two fixed points of~$\varphi$.
\end{Ourlist}
\end{theorem}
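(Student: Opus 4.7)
The plan is to split the analysis into a dichotomy: either the limit $\psi$ is itself a univalent self-map of $\UD$, in which case the commutation identity passes to the limit and gives $\psi\in\Zen(\varphi)$; or $\psi$ is a constant $c\in\overline{\UD}$, in which case I would identify $c$ via Julia's lemma combined with the Behan--Cowen results recalled in Section~\ref{S_preliminaries}.

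Since each $\psi_k$ maps $\UD$ into $\UD$, the locally uniform limit $\psi$ sends $\UD$ into $\overline{\UD}$. By the maximum modulus principle, either $\psi(\UD)\subset\UD$, or $\psi\equiv c$ for some $c\in\overline{\UD}$. If $\psi$ is non-constant, then Hurwitz's theorem together with the univalence of each $\psi_k$ gives $\psi\in\U(\UD)$, and passing to the limit in $\psi_k\circ\varphi=\varphi\circ\psi_k$ yields $\psi\in\Zen(\varphi)$. For part~(B), the additional statement that every $\psi_k$ is an automorphism is exactly Behan's result~(ii) recalled in Section~\ref{S_preliminaries}.

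Suppose now $\psi\equiv c\in\overline{\UD}$. If $c\in\UD$, then passing to the limit in $\varphi\circ\psi_k=\psi_k\circ\varphi$ at any $z_0\in\UD$ gives $\varphi(c)=c$. Non-elliptic self-maps have no interior fixed points, so this is impossible unless $\varphi$ is elliptic, in which case $c=\tau\in\UD$, yielding the conclusion of~(A). Hence we may assume $c\in\UC$ and $\varphi$ non-elliptic. After discarding finitely many terms (or reducing at once to the trivial case $\psi=\id_\UD\in\Zen(\varphi)$ if $\psi_k=\id_\UD$ holds infinitely often), we may assume $\psi_k\neq\id_\UD$ for every $k$. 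The Behan--Cowen results then guarantee that each such $\psi_k$ is non-elliptic with a Denjoy--Wolff point $\tau_k\in\UC$: $\tau_k=\tau$ in case~(A) by Behan~(i), and $\tau_k\in\{\tau,\sigma\}$ in case~(B) by Behan~(ii).

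The closing step is Julia's lemma, which for every $k$ and $z\in\UD$ gives
\begin{equation*}
\frac{|1-\overline{\tau_k}\,\psi_k(z)|^2}{1-|\psi_k(z)|^2}\;\le\;\frac{|1-\overline{\tau_k}\,z|^2}{1-|z|^2},
\end{equation*}
so that $\psi_k(z)$ is confined to a closed horoball at $\tau_k$ whose only contact point with $\UC$ is $\tau_k$ itself. Letting $k\to\infty$ along the convergent sequence $\psi_k\to c$, the limit $c\in\UC$ must coincide with $\tau_k$ for all sufficiently large~$k$ (after passing to a subsequence along which $\tau_k$ is constant, which must exist since $\psi_k\to c$ is a single limit and $\tau_k$ takes at most two values). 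Hence $c=\tau$ in case~(A) and $c\in\{\tau,\sigma\}$ in case~(B). The main obstacle is precisely this boundary-constant case: a priori $c$ could be any point of $\UC$, and forcing it to be one of the distinguished fixed points relies crucially on the identification, via Behan's theorem, of the Denjoy--Wolff point of every non-identity element of $\Zen(\varphi)$, which is what makes Julia's horoball constraint uniform in~$k$.
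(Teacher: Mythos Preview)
Your proof is correct and follows essentially the same strategy as the paper: split into the non-constant case (Hurwitz plus passing the commutation identity to the limit) and the constant case (identify the constant via Behan's theorem and Julia's lemma). Two minor differences worth noting: in~(A) the paper applies Julia's lemma to $\varphi\circ\psi_k$ rather than directly to $\psi_k$ (your direct application is cleaner), and in~(B) the paper, instead of reusing the horoball argument, conjugates to the upper half-plane so that $\varphi(w)=\lambda w$ and each $\psi_k(w)=\lambda_k w$, then reads off the limit of~$\lambda_k$ explicitly; your uniform treatment via Julia's lemma is equally valid and avoids the change of coordinates.
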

\begin{proof}
If $\psi=\id_\UD$, then there is nothing to prove. If ${\psi\neq\id_\UD}$, then removing a finite number of terms, we may assume that ${\psi_k\neq\id_\UD}$ for all ${k\in\Natural}$.

\StepP{\ref{IT_2_not_auto}} 
Assume first that ${\psi(\UD)\subset\UD}$. Then tending ${k\to+\infty}$ in ${\psi_k\circ \varphi}=\varphi\circ\psi_k$, we get ${\psi\circ\varphi}={\varphi\circ\psi}$.
If ${\psi=\const=:\sigma}$, it follows that ${\varphi(\sigma)=\sigma}$, which is only possible if ${\sigma=\tau}$. If $\psi$ is not constant, then it follows that $\psi$ is univalent and belongs to~$\Zen(\varphi)$.

Assume now that ${\psi(\UD)\not\subset\UD}$. Then $\psi(z)=\sigma$ for all ${z\in\UD}$ and some constant~${\sigma\in\partial\UD}$. We have to show that ${\sigma=\tau}$. First we note that  $\tau\in\UC$, because otherwise the relation ${\psi_k\circ\varphi}={\varphi\circ\psi_k}$ combined with the fact that $\varphi$ cannot have more than one fixed point in~$\UD$ would imply that $\psi_k(\tau)=\tau\in\UD$ for all ${k\in\Natural}$. Now
consider a horodisc $E(\tau,R)$ of center $\tau$ and some radius~${R>0}$. Fix a point ${z_0\in E(\tau,R)}$. Recall that, thanks to Behan's Theorem~\cite[Theorem~6]{Behan}, $\tau$~is the Denjoy\,--\,Wolff point  also for each~$\psi_{k}$. Therefore, by Julia's Lemma, we have $\varphi\big(\psi_{k}(z_0)\big)\in E(\tau,R)$ for all $k\in\Natural$. At the same time,
$$
 \varphi\big(\psi_{k}(z_0)\big)= \psi_{k}\big(\varphi(z_0)\big)~\to~\sigma\in\partial\UD\quad\text{as~$~k\to\infty$.}
$$
The only common point of $\partial\UD$ and the closure of the horodisc $E(\tau,R)$ is $\tau$. Thus,  $\sigma=\tau$.

\StepP{\ref{IT_2_auto}} If $\varphi$ is a hyperbolic automorphism, then by \cite[Lemma~2.1]{Heins}, see also \cite[Theorem~4.10.3]{Abate2}, any non-identity self-map in $\mathcal Z(\varphi)$ is also a hyperbolic automorphism having the same fixed points on~$\partial\UD$ as~$\varphi$. Passing to the upper half-plane~$\UH$ (and suppressing the language by denoting the corresponding self-maps of~$\H$ in the same way as the original self-maps of~$\UD$), we may assume that ${\varphi, \psi_{k}:\H\to \H}$ are given by ${\varphi(w)=\lambda w}$ and ${\psi_{k}(w)=\lambda_{k} w}$, respectively, for all ${z\in\UH}$ and some ${\lambda>1}$ and ${\lambda_{k}>0}$. By hypothesis, $(\lambda_k)_{k\in\Natural}$ converges to some ${\mu\in [0,+\infty]}$. If ${\mu\in (0,+\infty)}$, then $\psi(w)=\mu w$ is holomorphic and commutes with $\varphi $. Otherwise, either $\psi\equiv0$ or $\psi\equiv\infty$, with $0$ and~$\infty$ being exactly the two fixed points of~$\varphi$.
\end{proof}

\section{Simultaneous solutions to Abel's equation}\label{S_simultaneous}

The next theorem is one of the main technical results of the paper. It plays a crucial role at several points of our study.

\begin{theorem}\label{TH_simultaneous}
Suppose $\varphi\in\U(\UD)\setminus\{\id_\UD\}$ is non-elliptic. Let $\Delta$ be any abelian subset of $\Zen(\varphi)$, i.e. ${\psi_1\circ\psi_2}={\psi_2\circ\psi_1}$ for all ${\psi_1,\psi_2\in\Delta}$. Then there exists a univalent map $h_{\varphi,\Delta}:\UD\to\C$ with the following property: to each ${\psi \in \Delta}$ there corresponds a constant  $c_{\varphi,\psi}\in\C$ such that
\begin{equation}\label{EQ_simultaneous}
	h_{\varphi,\Delta}\circ\varphi=h_{\varphi,\Delta}+1\quad\text{and}\quad
	h_{\varphi,\Delta}\circ\psi=h_{\varphi,\Delta}+c_{\varphi,\psi}.
\end{equation}
\end{theorem}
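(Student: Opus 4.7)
The plan is to start from the canonical holomorphic model $(S,h,\alpha)$ of $\varphi$ with $\alpha(z):=z+1$ supplied by Theorem~\ref{Thm:model}, so that the first equation $h\circ\varphi=h+1$ of~\eqref{EQ_simultaneous} already holds with the tentative choice $h_{\varphi,\Delta}:=h$. For every $\psi\in\Delta$, Theorem~\ref{Thm:general} provides a univalent $g_\psi\in\Zen_S(\alpha)$ with $g_\psi(\Omega)\subset\Omega$ and $\psi=h^{-1}\circ g_\psi\circ h$, where $\Omega:=h(\UD)$. Injectivity of $h$ combined with the abelianity of $\Delta$ ensures that $\{\alpha\}\cup\{g_\psi:\psi\in\Delta\}$ is an abelian family inside $\U(S)$, so on the model side the problem reduces to finding a common linearization of this family by translations of $\C$.

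I would dispose first of the hyperbolic case and the parabolic zero hyperbolic step case, which by cases~\ref{IT_HM-hyp} and~\ref{IT_HM-para-0HS} of Theorem~\ref{Thm:model} correspond to $S$ being a horizontal strip or $S=\C$. In those two situations, the rigidity recalled in the paragraph preceding Remark~\ref{RM_relation-to-uniqueness_of-the-model}---namely \cite[Lemma~2.1]{Heins} for strips, and the elementary fact that a univalent entire function must be affine when $S=\C$---forces every element of $\Zen_S(\alpha)$ to be a translation $z\mapsto z+c$. Hence $g_\psi(z)=z+c_\psi$ for some $c_\psi\in\C$, and the choice $h_{\varphi,\Delta}:=h$, $c_{\varphi,\psi}:=c_\psi$ verifies~\eqref{EQ_simultaneous} simultaneously on all of $\Delta$, with $h_{\varphi,\Delta}$ manifestly independent of $\psi$. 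This is what Proposition~\ref{PR_abelian} is designed to establish in these two regimes.

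The main obstacle is the parabolic case of positive hyperbolic step, where case~\ref{IT_HM-para-PHS} of Theorem~\ref{Thm:model} together with the normalization in Remark~\ref{RM_normalization} forces $S=\UH$, and now $\Zen_{\UH}(\alpha)$ does contain genuinely non-translation univalent maps, so the canonical $h$ no longer conjugates the $g_\psi$ to translations. To produce a common solution, I would descend to the cylindrical quotient $\UH/\Z$ via $\pi(w):=\exp(2\pi i w)$, which identifies it with the punctured disc $\UD^*$. The relation $g_\psi\circ\alpha=\alpha\circ g_\psi$ makes every $g_\psi$ periodic with period $1$, hence it descends to a univalent self-map $\tilde g_\psi$ of $\UD^*$, and the pairwise commutativity survives the descent. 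Exploiting $g_\psi(\Omega)\subset\Omega$ and the generalized absorption property recalled in Remark~\ref{RM_absorption-continuous}, one controls the behaviour of the $\tilde g_\psi$ near the puncture $0$, allowing one to extract a common univalent linearization $\tilde h$ for this abelian family; lifting $\tilde h$ via $\log$ should then produce a univalent $h_{\varphi,\Delta}:\UH\to\C$ that conjugates $\alpha$ to $z\mapsto z+1$ and each $g_\psi$ to a translation $z\mapsto z+c_{\varphi,\psi}$, after which~\eqref{EQ_simultaneous} follows by composition with the canonical $h$.

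The genuinely hard part is that $h_{\varphi,\Delta}$ must be \emph{independent} of the particular $\psi\in\Delta$: a simultaneous linearization of the whole abelian family must be constructed at once, not one univalent solution per element. This is precisely the aspect that cannot be extracted from \cite[Theorem~3.11]{Simultaneous} in the parabolic positive-step regime, and it is the reason for the dedicated construction encapsulated in Proposition~\ref{PR_simultaneous-para-positive} and carried out in Section~\ref{S_para-positive}.
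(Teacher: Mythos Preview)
Your treatment of the hyperbolic and parabolic zero-hyperbolic-step cases is correct and coincides with the paper's argument (Proposition~\ref{PR_abelian}): the Koenigs function $h$ itself linearizes every $g_\psi$ because $\Zen_S(\alpha)$ consists of translations when $S$ is a strip or $S=\C$.

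For the positive-hyperbolic-step case your outline goes in the right direction---descend via $\pi(w)=e^{2\pi i w}$---but there is a genuine gap at the phrase ``extract a common univalent linearization $\tilde h$ for this abelian family''. You do not say \emph{how}, and this is precisely the crux. The paper's mechanism, which you do not identify, is a bootstrap through the zero-hyperbolic-step case already established: the descended map $f\in\U(\UD)$ extends with $f(0)=0$ and, when $g_\psi$ is not a translation, has multiplier $\lambda:=f'(0)\in\UD^*$; its Schr\"oder--Koenigs function $h_0$ lifts to a univalent $F$ on $\UH$ with $F(w+1)=F(w)+1$ and $F\circ g_\psi=F+c$. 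The key observation is that $(\C,c^{-1}F,z\mapsto z+1)$ is a holomorphic model for $g_\psi$, so $g_\psi$ is \emph{parabolic of zero hyperbolic step} as a self-map of $\UH$. Now any other $\tilde g$ coming from $\tilde\psi\in\Delta$ commutes with $g_\psi$ (abelianity of $\Delta$), and Proposition~\ref{PR_abelian}---already proved in the zero-step regime---forces $\tilde g$ to be a translation in the $F$-coordinate. Thus $h_{\varphi,\Delta}:=F\circ h$ works for all of $\Delta$ at once.

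Your appeal to ``generalized absorption'' and ``control near the puncture'' gestures toward the fact that $f$ extends across $0$, but does not yield simultaneous linearization by itself; without the zero-step bootstrap (or an equivalent argument on the disc via uniqueness of the Schr\"oder function under commutation), the $\psi$-independence of $h_{\varphi,\Delta}$---which you correctly flag as the hard point---remains unproved.
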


In the most complicated and interesting case of parabolic self-maps~$\varphi$
having positive hyperbolic step, Theorem~\ref{TH_simultaneous} up to our best knowledge is completely new, although related results for a \textit{fixed pair} of commuting self-maps have been previously known, see e.g. \cite[Theorem~3.1]{Cowen-comm}, \cite[Theorem~2.1]{Vlacci}, \cite[Theorem~6]{GB}, \cite[Lemma~3]{conReich} (see also \cite[p.\,146]{EliShobook10}). The dependence of the solutions to~\eqref{EQ_simultaneous} on the choice of~$\Delta$ is essential, as the whole centralizer $\Zen(\varphi)$ is not necessarily abelian in this case; see Examples~\ref{EX_Z-non-abelian} and~\ref{EX_again-non-abelian}.

In the context of holomorphic (not necessarily injective) self-maps of the unit ball in~$\C^n$, a strong and very general result by Arosio and Bracci \cite[Theorem~1.1]{Simultaneous} treats finite abelian subsets of the centralizer, but it does not imply Theorem~\ref{TH_simultaneous} either, because a parabolic self-map ${\varphi\in\U(\UD)}$ of positive hyperbolic step can commute with a parabolic self-map ${\psi\in\U(\UD)}$ having zero hyperbolic step, see Sect.\,\ref{S_examples} for examples. In such a case, the canonical Kobayashi hyperbolic semi-model for the pair ${(\varphi,\psi)}$ provided by \cite[Theorem~1.1]{Simultaneous} is trivial, i.e. its dimension vanishes.

Later in the paper, we will be able to identify the value of the constant $c_{\varphi,\psi}$ that appears in~\eqref{EQ_simultaneous}. Namely, given a non-elliptic $\varphi\in\U(\UD)$ and $\psi\in\Zen(\varphi)$, the constant $c_{\varphi,\psi}$ for which the system of functional equations
\begin{equation}\label{EQ_two-Abel-in-remark}
 h_*\circ \varphi\,=\, h_*+1,\qquad h_*\circ\psi\,=\,h_*+c_{\varphi,\psi}
\end{equation}
admits a univalent solution $h_*:\UD\to\C$ is unique and it is given by
\begin{equation}\label{EQ_formula-for-c}
c_{\varphi,\psi}=\begin{cases}
 \displaystyle \angle\lim_{z\to\tau}\big(\psi(z)-z\big)/\big(\varphi(z)-z\big),& \text{if $\varphi$ is parabolic,}\\[1.5ex]
 \hphantom{\angle}\frac{\log{\psi^\prime(\tau)}}{\log{\varphi^\prime(\tau)}}, & \text{if $\varphi$ is hyperbolic.}
 \end{cases}
\end{equation}
For parabolic self-maps of positive hyperbolic step, this formula will be deduced in Remark~\ref{Rem:cinPHP}.
For hyperbolic self-maps and for parabolic self-maps of zero hyperbolic step, \eqref{EQ_formula-for-c} will be established in the proofs of Theorems~\ref{TH_isomorphism} and~\ref{Thm:0HS}, respectively; see \eqref{Eq:TH_isomorphism} and \eqref{Eq:0HS}, see also Remark~\ref{RM_c-phi-psi_unique} for the uniqueness in the hyperbolic case.
\medskip

The proof of Theorem~\ref{TH_simultaneous} for parabolic self-maps of positive hyperbolic step is given in Sect.\,\ref{S_para-positive}. More precisely, we will prove the following proposition.
\begin{proposition}\label{PR_simultaneous-para-positive}
Under conditions of Theorem~\ref{TH_simultaneous}, suppose additionally that $\varphi$ is parabolic of positive hyperbolic step and let ${(S,h,z\mapsto z+1)}$, $S\in\{-\UH,\UH\}$, stand for its canonical holomorphic model. Then
there exists a univalent map $\beta_\Delta:S\to\C$  such that:
		\begin{enumerate}[label={\rm(\roman*)}, ref={\rm(\roman*)}]
			\item ${\beta_\Delta(z+1)}={\beta_\Delta(z)+1}$ for all~${z\in S}$ and\smallskip
			\item $h_{\varphi,\Delta}:=\beta_\Delta\circ h$ satisfies~\eqref{EQ_simultaneous} for each ${\psi\in\Delta}$.
		\end{enumerate}
\end{proposition}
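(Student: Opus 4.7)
The plan is to reduce, via the canonical model $(S,h,T)$ of $\varphi$ with $T(z) := z+1$, to constructing a univalent $\beta_\Delta: S \to \C$ with $\beta_\Delta\circ T = \beta_\Delta + 1$ and $\beta_\Delta\circ g_\psi = \beta_\Delta + c_\psi$ for each $\psi \in \Delta$, where $g_\psi: S \to S$ is the unique univalent self-map supplied by Remark~\ref{RM_relation-to-uniqueness_of-the-model} so that $h\circ\psi = g_\psi\circ h$ (and then automatically $g_\psi\circ T = T\circ g_\psi$, with $\{g_\psi\}_{\psi\in\Delta}$ commuting). Assume $S = \UH$ (the case $S = -\UH$ is symmetric), and introduce the universal covering $\pi(z) := e^{2\pi i z}:\UH\to\UD\setminus\{0\}$. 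Each $g_\psi$ descends to a bounded holomorphic $\hat g_\psi: \UD\setminus\{0\} \to \UD\setminus\{0\}$, which extends across $0$ by Riemann's removable singularity theorem; a Denjoy--Wolff argument (using that $g_\psi$ shares the boundary fixed point $\infty$ with $T$, forced by commutation) gives $\hat g_\psi(0) = 0$, and the chain ``$\hat g_\psi(w_1) = \hat g_\psi(w_2) \Rightarrow g_\psi(z_1) - g_\psi(z_2) \in \Z \Rightarrow g_\psi(z_1) = g_\psi(z_2 + k) \Rightarrow z_1 = z_2 + k \Rightarrow w_1 = w_2$'' (exploiting commutation with $T$ and univalence of $g_\psi$ on $\UH$) makes $\hat g_\psi$ a \emph{univalent} self-map of $\UD$; the family $\{\hat g_\psi\}_{\psi\in\Delta}$ is then a commuting family of univalent disc self-maps fixing~$0$.

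The construction now splits into two cases. If every $\hat g_\psi$, $\psi\in\Delta$, is a disc rotation, then by Schwarz's lemma each $g_\psi$ is a real translation $z\mapsto z + a_\psi$, and $\beta_\Delta := \id_S$ satisfies the conclusion with $c_\psi := a_\psi$. Otherwise I select $\psi_0\in\Delta$ with $\lambda_0 := \hat g_{\psi_0}'(0)$ of modulus strictly less than~$1$. Univalence of $\hat g_{\psi_0}$ combined with Hurwitz' theorem applied to $\lambda_0^{-n}\hat g_{\psi_0}^{\circ n}$ produces a \emph{univalent} Koenigs function $\sigma_0:\UD\to\C$ with $\sigma_0(0)=0$, $\sigma_0'(0)=1$ and $\sigma_0\circ\hat g_{\psi_0} = \lambda_0\,\sigma_0$. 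The ratio $\sigma_0(w)/w$ is holomorphic and nonvanishing on $\UD$ with value $1$ at~$0$, so a single-valued holomorphic logarithm is available, and I set
\[
\beta_\Delta(z) \ :=\ z \,+\, \tfrac{1}{2\pi i}\,\log\!\bigl(\sigma_0(e^{2\pi i z})/e^{2\pi i z}\bigr), \qquad z\in\UH,
\]
equivalently characterized by $e^{2\pi i\,\beta_\Delta(z)} = \sigma_0(e^{2\pi i z})$.

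The required properties will then follow by direct verification. Item~(i), $\beta_\Delta(z+1) = \beta_\Delta(z) + 1$, is immediate from $1$-periodicity of $e^{2\pi i z}$. The Koenigs equation $\sigma_0\circ\hat g_{\psi_0} = \lambda_0\,\sigma_0$ translates to $\beta_\Delta\circ g_{\psi_0} = \beta_\Delta + c_{\psi_0}$ with $c_{\psi_0} := (2\pi i)^{-1}\log\lambda_0$ on a fixed branch (consistent with the paper's formula~\eqref{EQ_formula-for-c}). For general $\psi\in\Delta$, the commutation $\hat g_\psi\circ\hat g_{\psi_0} = \hat g_{\psi_0}\circ\hat g_\psi$ makes $\sigma_0\circ\hat g_\psi$ another solution of the multiplicative Koenigs equation for $\hat g_{\psi_0}$ with the same multiplier $\lambda_0$; uniqueness of the Koenigs function up to a multiplicative constant forces $\sigma_0\circ\hat g_\psi = \lambda_\psi\,\sigma_0$ with $\lambda_\psi := \hat g_\psi'(0)$, giving $\beta_\Delta\circ g_\psi = \beta_\Delta + c_\psi$ for $c_\psi = (2\pi i)^{-1}\log\lambda_\psi$ on a consistent branch (locally constant, hence constant by connectedness of $\UH$). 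Univalence of $\beta_\Delta$ on $\UH$ reduces cleanly to univalence of $\sigma_0$ on $\UD$: if $\beta_\Delta(z_1) = \beta_\Delta(z_2)$, exponentiation yields $\sigma_0(e^{2\pi i z_1}) = \sigma_0(e^{2\pi i z_2})$, whence $z_1 - z_2 \in \Z$ by univalence of $\sigma_0$, and property~(i) then forces $z_1 = z_2$.

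The step I expect to require the most care is establishing the univalence of the descent $\hat g_{\psi_0}: \UD\to\UD$, since this is exactly what lets Hurwitz deliver a univalent $\sigma_0$, and hence a univalent $\beta_\Delta$. This univalence is not a formal consequence of holomorphy of the descent; it rests on the confluence of the univalence of $g_{\psi_0}$ on the half-plane $\UH$ with the commutation $g_{\psi_0}\circ T = T\circ g_{\psi_0}$, which together guarantee that distinct $\pi$-fibers go to distinct $\pi$-fibers. This is precisely the structural feature available in the parabolic positive hyperbolic step setting, where the canonical base space is a half-plane naturally covered by $\UD\setminus\{0\}$ via the exponential $\pi$.
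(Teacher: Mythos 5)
Your proposal is correct, and its overall architecture mirrors the paper's: both pass to the canonical model $(\UH,h,z\mapsto z+1)$, descend each $g_\psi$ through the universal covering $\pi(z)=e^{2\pi i z}$ to a univalent self-map of $\UD$ fixing~$0$, split into the Schwarz-equality case and the contractive case, and in the latter obtain $\beta_\Delta$ by lifting the disc Koenigs function back to the half-plane. There are two small but genuine differences. First, what you do inline (extension across~$0$, the Denjoy--Wolff argument for $\hat g_\psi(0)=0$, and the fiber-to-fiber computation yielding univalence of $\hat g_\psi$) the paper packages into Lemma~\ref{Lem:intertwining-exp}; your version is equivalent but avoids citing Picard's Great Theorem for the removable singularity, instead noting boundedness suffices. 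Second, and more substantively, for the abelian step (getting a single $\beta_\Delta$ that works for all $\psi\in\Delta$) the paper identifies the $g$ corresponding to $\psi_0$ as a parabolic self-map of~$\UH$ of \emph{zero} hyperbolic step, computes its Koenigs function as $c^{-1}F$, and then invokes the already-proved zero-hyperbolic-step case of Proposition~\ref{PR_abelian}. You instead argue directly in the disc: if $\hat g_\psi$ commutes with $\hat g_{\psi_0}$, then $\sigma_0\circ\hat g_\psi$ solves the same Schr\"oder equation with the same multiplier $\lambda_0$, vanishes to first order at~$0$, and so by the classical uniqueness of the Koenigs function equals $\hat g_\psi'(0)\,\sigma_0$. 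This is more elementary and self-contained, since it bypasses the half-plane zero-HS classification entirely and relies only on textbook Koenigs theory for disc self-maps fixing~$0$. Both routes are sound; yours is the shorter one at this step, while the paper's emphasizes the unifying role of its Proposition~\ref{PR_abelian}.
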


In the remaining two cases, Theorem~\ref{TH_simultaneous} is an immediate corollary of the following statement.

\begin{proposition}\label{PR_abelian}
Let $\varphi\in\U(\UD)$ be hyperbolic or parabolic of zero hyperbolic step and let $h$ stand for the Koenigs function of~$\varphi$. Then a self-map $\psi:\UD\to\UD$ belongs to $\Zen(\varphi)$ if and only if~\eqref{EQ_two-Abel-in-remark} holds with~$h_*:=h$ and a suitable constant ${c_{\varphi,\psi}\in\C}$.
In particular, the centralizer $\Zen(\varphi)$ is abelian.
\end{proposition}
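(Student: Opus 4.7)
The approach is to lift the commutation of $\psi$ with $\varphi$ through the Koenigs function $h$ to the base space $S$ of the canonical model, where the relevant rigidity is available. By Theorem~\ref{Thm:model}, $S=\C$ in the parabolic zero-hyperbolic-step case and $S$ is a bounded horizontal strip in the hyperbolic case. As noted right before Theorem~\ref{Thm:general}, any univalent holomorphic self-map of such an $S$ that commutes with $\alpha(z)=z+1$ must be a translation $z\mapsto z+c$; for strips this is \cite[Lemma~2.1]{Heins}, while for $\C$ it follows from the elementary fact that a univalent entire function is affine. So my plan is to produce from $\psi\in\Zen(\varphi)$ such a univalent self-map of $S$ and read off the Abel-type identity.

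For the forward implication, I set $h_\psi:=h\circ\psi$. Commutativity together with $h\circ\varphi=h+1$ gives $h_\psi\circ\varphi=h_\psi+1$. Remark~\ref{RM_factorization}, applied with $S_1=S$ and $\alpha_1=\alpha$, yields $\eta\in\Hol(S,S)$ with $h_\psi=\eta\circ h$ and $\eta\circ\alpha=\alpha\circ\eta$. The map $\eta$ is univalent on $h(\UD)$ as the composition $h_\psi\circ h^{-1}$ of univalent maps, and I propagate this univalence to all of $S$: if $\eta(z_1)=\eta(z_2)$, I pick $n\in\Natural$ large enough so that $\alpha^{\circ n}(z_1),\,\alpha^{\circ n}(z_2)\in h(\UD)$ (via the generalized absorption property of Remark~\ref{RM_absorption-continuous}) and use $\eta\circ\alpha=\alpha\circ\eta$ to reduce to injectivity on $h(\UD)$. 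The rigidity observation above then forces $\eta(z)=z+c_{\varphi,\psi}$, so that $h\circ\psi=h+c_{\varphi,\psi}$.

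The reverse implication, the univalence of $\psi$, and the abelian property are then purely algebraic. If $h\circ\psi=h+c$, then
\[
h\circ(\psi\circ\varphi)=(h+c)\circ\varphi=h+1+c=(h+1)\circ\psi=h\circ(\varphi\circ\psi),
\]
and injectivity of $h$ yields $\psi\circ\varphi=\varphi\circ\psi$; the same injectivity combined with $h\circ\psi=h+c$ also forces $\psi$ to be univalent. For abelianness, if $h\circ\psi_j=h+c_j$ for $j=1,2$, the identical manipulation produces $h\circ(\psi_1\circ\psi_2)=h+c_1+c_2=h\circ(\psi_2\circ\psi_1)$, and one more application of injectivity finishes the argument. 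The one genuinely technical step in the whole plan is the extension and univalence of $\eta$ on the full model space $S$; once that is secured, through the absorbing property of $h(\UD)$ together with the intertwining relation, everything else is formal.
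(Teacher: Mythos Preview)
Your proof is correct and follows essentially the same route as the paper: both lift $\psi$ through the Koenigs function to obtain a univalent self-map of the model space $S$ commuting with $z\mapsto z+1$, and then invoke the rigidity (Heins' lemma for strips, the affine structure of univalent entire maps for~$\C$) to force it to be a translation. The only cosmetic difference is that the paper cites Theorem~\ref{Thm:general} directly to obtain the univalent $g\in\Zen_S(\alpha)$, whereas you go through Remark~\ref{RM_factorization} and supply the univalence of the extension $\eta$ by hand via the absorption property; the converse and the abelian conclusion are handled identically.
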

One of the implications in the above proposition is almost a banality. Indeed, if \eqref{EQ_two-Abel-in-remark} admits a univalent solution, then
$$
 \psi=h_*^{-1}\circ\big(h_*+c_{\varphi,\psi}\big),\quad \varphi=h_*^{-1}\circ\big(h_*+1\big),
$$
and it immediately follows that $\psi\in\Zen(\varphi)$. The other implication is rather non-trivial. For parabolic self-maps of zero hyperbolic step, it follows from Theorem~\ref{Thm:general} as explained in Sect.\,\ref{proofTH_simultaneousparaboliczero} on page~\pageref{proofTH_simultaneousparaboliczero}. Note that the univalence of $\varphi$ and~$\psi$ plays in this case an essential role, in contrast to the hyperbolic case, in which Proposition~\ref{PR_abelian}, and hence Theorem~\ref{TH_simultaneous}, can be deduced from \cite[Theorem~3.1]{Cowen-comm} combined with some details in its proof. Cowen's proof in~\cite{Cowen-comm} makes use of Behan's Theorem~\cite[Theorem~6]{Behan} (see also \cite[Section~4.10]{Abate2}) stating that commuting holomorphic self-maps, aside from the case of hyperbolic automorphisms, share the Denjoy\,--\,Wolff point. In order to clarify why the situation here differs so much from the case of parabolic self-maps of positive hyperbolic step, we give a direct proof of Proposition~\ref{PR_abelian} for hyperbolic self-maps in Sect.\,\ref{S_hyperbolic}.  It is based on a result of Heins~\cite[Lemma~2.1]{Heins}, which is considerably more elementary than Behan's Theorem.

Finally, it is worth mentioning that in the hyperbolic case, the simultaneous solution to Abel's equations, given in Proposition~\ref{PR_abelian} and Theorem~\ref{TH_simultaneous}, has a generalization to several complex variables, see \cite[Theorem~3.11]{Simultaneous}.

A first consequence of the simultaneous solution to Abel's two equations~\eqref{EQ_simultaneous} is the next result that characterizes when the identity map is isolated  in the centralizer.
\begin{theorem}\label{TH_dichotomy-nonelliptic}
Let $\varphi\in\U(\UD)\setminus\{\id_\UD\}$ be non-elliptic. If $~\id_\UD$ is not isolated in~$\mathcal Z(\varphi)$, then $\mathcal Z(\varphi)$ contains a non-trivial continuous one-parameter semigroup $(\phi_t)$.
\end{theorem}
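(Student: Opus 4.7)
The plan is to convert the topological hypothesis (non-isolation of $\id_\UD$ in $\Zen(\varphi)$) into an analytic one (non-isolation of $0$ in a closed additive subsemigroup of $\C$), and then extract a ray through the origin from which the desired one-parameter semigroup is built via the homomorphism of Theorem~\ref{PropA_varphi} (or its positive-hyperbolic-step analogue). I fix once and for all a sequence $(\psi_k)\subset\Zen(\varphi)\setminus\{\id_\UD\}$ with $\psi_k\to\id_\UD$ locally uniformly, and I treat the three non-elliptic classes separately.

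Consider first the hyperbolic and parabolic zero-hyperbolic-step cases. By Proposition~\ref{PR_abelian}, the Koenigs function $h$ of $\varphi$ linearizes every element of the (abelian) centralizer: $h\circ\psi_k=h+c_k$ with $c_k:=c_{\varphi,\psi_k}\in\mathcal{A}_\varphi$. Evaluating at any fixed point $z_0\in\UD$ gives $c_k=h(\psi_k(z_0))-h(z_0)\to 0$ by local uniform convergence, while injectivity of $\Tmap_\varphi$ (Theorem~\ref{PropA_varphi}\,\ref{IT_homomorphism}) together with $\psi_k\ne\id_\UD$ yields $c_k\ne 0$. So $0$ is non-isolated in the closed additive subsemigroup $\mathcal{A}_\varphi$. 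The crux is now an elementary lemma: \emph{any closed additive subsemigroup $A\subset\C$ with $0\in A$ non-isolated contains a whole ray $\{tv_0:t\ge0\}$ for some $v_0\in\partial\UD$}. Indeed, after passing to a subsequence with $c_k/|c_k|\to v_0$, for each $t>0$ the integers $n_k:=\lfloor t/|c_k|\rfloor$ produce $n_kc_k\to tv_0$, which lies in $A$ by closedness. Setting $\phi_t:=\Tmap_\varphi(tv_0)=h^{-1}(h+tv_0)$ for $t\ge0$, Theorem~\ref{PropA_varphi}\,\ref{IT_homomorphism} ensures that $(\phi_t)_{t\ge0}$ is a continuous one-parameter semigroup contained in $\Zen(\varphi)$, and it is non-trivial because $v_0\ne 0$ and $\Tmap_\varphi$ is injective.

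The main obstacle is the parabolic case of positive hyperbolic step, where $\Zen(\varphi)$ need not be abelian (Examples~\ref{EX_Z-non-abelian} and~\ref{EX_again-non-abelian}), so Proposition~\ref{PR_abelian} does not apply and $\Tmap_\varphi$ need not be surjective. For each $k$ I would apply Proposition~\ref{PR_simultaneous-para-positive} to the abelian set $\Delta_k:=\{\psi_k^{\circ n}:n\in\N_0\}$, obtaining a univalent map $h_k=\beta_k\circ h$ on $\UD$ with $h_k\circ\varphi=h_k+1$ and $h_k\circ\psi_k=h_k+c_k$, where the constants $c_k=c_{\varphi,\psi_k}$ are given by formula~\eqref{EQ_formula-for-c}. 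The delicate points here are (i) to verify that $c_k\to 0$ (which requires an appropriate normalization of the $\beta_k$'s combined with a normal-family argument, or a direct analysis of the angular-limit formula~\eqref{EQ_formula-for-c} as $\psi_k\to\id_\UD$), and (ii) to identify a single closed additive subsemigroup $\widetilde{A}\subset\C$, independent of~$k$, containing all the $c_k$'s and admitting an injective continuous homomorphism $\widetilde{\Tmap}\colon\widetilde A\to\Zen(\varphi)$ — this is precisely the role played by $\mathcal{A}^*_\varphi$ in the dichotomy of Theorem~\ref{TH_emb-dichotomy-P-PHS}. Once this parametrization is set up, the ray-extraction lemma applies verbatim to $\widetilde A$, and $\phi_t:=\widetilde\Tmap(tv_0)$ gives the desired non-trivial continuous one-parameter semigroup in $\Zen(\varphi)$.
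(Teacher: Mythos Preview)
Your approach works cleanly in the hyperbolic and zero-hyperbolic-step parabolic cases; in fact it is simpler there than the paper's unified argument, since the fixed Koenigs function linearizes every~$\psi_k$ and the ray extracted from~$\mathcal A_\varphi$ maps directly to a semigroup via~$\Tmap_\varphi$. Your point~(i) in the positive-hyperbolic-step case is also not a real obstacle: continuity of~$\Smap_\varphi$ (Theorem~\ref{Thm:PHS-S}\,\ref{IT_PHS-cont}) gives $c_k=\Smap_\varphi(\psi_k)\to\Smap_\varphi(\id_\UD)=0$ immediately.

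The genuine gap is your point~(ii). The semigroup $\mathcal A^*_\varphi$ does \emph{not}, in general, admit an injective continuous homomorphism $\widetilde\Tmap:\mathcal A^*_\varphi\to\Zen(\varphi)$: the map $\Smap_\varphi$ goes the wrong way, and its only homomorphic section is $\Tmap_\varphi$, whose domain $\mathcal A_\varphi$ can be a proper subset of~$\mathcal A^*_\varphi$ (Example~\ref{EX_A-neq-Astar}, Remark~\ref{RM_examples-not-iso}). In Theorem~\ref{TH_emb-dichotomy-P-PHS} this causes no trouble because the ray extracted there is $[0,+\infty)\subset\Real$, and Theorem~\ref{Thm:PHS-S}\,\ref{IT_PHS-real} identifies $\mathcal A^*_\varphi\cap\Real$ with $\mathcal A_\varphi\cap\Real$. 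But in your situation the limit direction~$v_0$ may lie strictly in~$\UH$ --- nothing forces $\Arg c_k\to0$ --- and then you have no mechanism to lift the ray $\{tv_0:t\ge0\}\subset\mathcal A^*_\varphi$ back to a one-parameter semigroup in~$\Zen(\varphi)$.

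The paper circumvents this by abandoning any fixed target semigroup~$\widetilde A$. It applies Theorem~\ref{TH_simultaneous} to each~$\psi_k$ separately, obtaining a \emph{different} univalent Abel function~$h_k$ (normalized by $h_k(0)=0$); the Growth Theorem makes $(h_k)$ a normal family, and one passes to a univalent limit~$h$ with $h\circ\varphi=h+1$. The core of the proof is then a Carath\'eodory kernel-convergence argument showing that $\Omega:=h(\UD)$ satisfies $\Omega+te^{i\theta_0}\subset\Omega$ for all~$t\ge0$, whence $\phi_t:=h^{-1}\circ(h+te^{i\theta_0})$ is the required semigroup. In other words, the normal-family step you mention as a technicality for~(i) is actually the engine of the whole argument: the limit~$h$ is a \emph{new} linearizer for~$\varphi$ --- in general neither the Koenigs function nor any of the~$h_k$ --- whose image is invariant under the entire ray of translations. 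This construction is what replaces the missing homomorphism~$\widetilde\Tmap$.
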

\begin{proof}
The hypothesis of the theorem implies that ${\mathcal Z(\varphi)\setminus\{\id_\UD\}}$ contains a sequence $(\psi_n)$ converging locally uniformly in~$\UD$ to the identity map. By Theorem~\ref{TH_simultaneous}, for each ${n\in\Natural}$ there exists a univalent function $h_n$ in~$\UD$ and a complex number~$c_n\neq0$ (because $\psi_n \neq \id_\D$ for all $n$) such that
\begin{equation}\label{EQ_simult-Abel-equations}
h_n\circ \varphi=h_n+1\quad\text{and}\quad {h_n\circ\psi_n=h_n+c_n}.
\end{equation}
Note that the functions $h_n$ are defined up to an additive constant; hence, we may suppose that $h_n(0)=0$ for all ${n\in\Natural}$. Then ${h_n(\varphi(0))=h_n(0)+1=1}$. As a consequence, by the Growth Theorem, see e.g. \cite[Theorem~2.6 on p.\,33]{Duren}, we have
$$
\frac{(1-|\varphi(0)|)^{2}}{|\varphi(0)|}\leq |h'_{n}(0)|\leq \frac{(1+|\varphi(0)|)^{2}}{|\varphi(0)|},
$$
for all~$n\in\Natural$.
It follows that $h_n$'s form a normal family in~$\UD$. Therefore, passing if necessary to a subsequence we may suppose that $(h_n)$ converges uniformly on compacta in~$\UD$ to some function~$h$. Note that ${h(0)=0}$, ${h(\varphi(0))=1}$. In particular, $h$ is not constant, and as a consequence of Hurwitz Theorem, it is univalent in~$\UD$.

Clearly, ${c_n\to0}$ as ${n\to+\infty}$. Passing to a subsequence we may suppose that also ${\arg c_n\to\theta_0}$ as ${n\to+\infty}$ for some ${\theta_0\in\Real}$. Now fix some ${t>0}$ and write $k(n):=\big\lfloor t/|c_n|\big\rfloor$, to denote the integer part of ${t/|c_n|}$. Since
$$
k(n)|c_n|\leq t<(k(n)+1)|c_n| \quad \textrm{ for all } \ n,
$$
it is easy to see that
\begin{equation}\label{EQ_numbers-converge}
k(n)c_n\to t e^{i\theta_0}\quad\text{as~$~n\to+\infty$}.
\end{equation}
We claim that
\begin{equation}\label{EQ_remains-to-show-inv-Omega}
h(\UD)=:\Omega\supset\Omega + te^{i\theta_0}.
\end{equation}
If the above inclusion holds for all ${t\ge0}$, then the formula $\phi_t(z):=h^{-1}\big(h(z)+te^{i\theta_0}\big)$ defines a non-trivial one-parameter semigroup in~$\UD$ and moreover, $(\phi_t)\subset\mathcal Z(\varphi)$.

So it remains to prove~\eqref{EQ_remains-to-show-inv-Omega}. To this end,
we consider the set
$$
 U:=\bigcup_{m\in\Natural}\interior\Big(\bigcap_{n\ge m}\Omega_n\Big),\quad\Omega_n:=h_n(\UD),
$$
where $\interior(\cdot)$ stands for the interior of a set in~$\C$.  Let us check that
\begin{equation}\label{EQ_U-inv}
U+te^{i\theta_0}\subset U.
\end{equation}
Fix an arbitrary~$w_0\in U$. Since $U$ is open, it contains some closed disc~$B$ centred at~$w_0$. The expanding sequence of sets ${U_m:=\interior(\cap_{n\ge m}\Omega_n)}$ is an open cover of~$B$. Therefore,  by compactness of~$B$, there exists ${m_0\in\Natural}$ such that ${B\subset U_{m_0}}$ and hence, ${B\subset \Omega_n}$ for any ${n\ge m_0}$. Further, let $B_1$ be an open disc centred at~$w_0$ of radius strictly smaller than that of the disc~$B$. Then, according to~\eqref{EQ_numbers-converge}, there exists $m_1\ge m_0$ such that ${\omega_n(w):=w+\big(te^{i\theta_0}-k(n)c_n\big)\in B}$ for any~${n\ge m_1}$ and any~$w\in B_1$. Finally, from~\eqref{EQ_simult-Abel-equations} it follows that for each $n\in\Natural$, ${\Omega_n+ c_n\subset\Omega_n}$. Therefore, since $k(n)\in \N_{0}$, for any ${w\in B_1}$ and all ${n\ge m_1}$,
$$
 w+te^{i\theta_0}~=~\omega_n(w)+k(n)c_n~\in~B+k(n)c_n\subset\Omega_n+k(n)c_n\subset\Omega_n,
$$
i.e. $B_1+te^{i\theta_0}\subset\Omega_n$ for all $n\in\Natural$ large enough. It follows that ${w_0+te^{i\theta_0}\in B_1+te^{i\theta_0}\subset U}$. This proves~\eqref{EQ_U-inv}.

Since $t>0$ in the above argument is arbitrary, \eqref{EQ_U-inv} implies that together with any point ${w_0\in U}$, the set~$U$ contains the whole ray ${R(w_0):=\{w_0+te^{i\theta_0}:t\ge0\}}$. Obviously, $R(w_0)$ is a connected set and ${w_0\in R(w_0)}$. It follows that ${w_0+te^{i\theta_0}}$ and~$w_0$ belong to the same connected component of~$U$. Thus, in order to complete the proof of~\eqref{EQ_remains-to-show-inv-Omega} it remains to notice that by Carath\'eodory's kernel convergence theorem, $\Omega$ is a connected component of~$U$.
\end{proof}

\begin{remark}
Clearly, if in the above proof $\theta_0=0$, then $\phi_1=\varphi$ and hence $\varphi$ is embeddable. At the same time, if $0$ is not in the limit set of $(\arg c_n)$, then $\varphi$ does not have to be embeddable, even though $\Zen(\varphi)$ contains a non-trivial one-parameter semigroup, see e.g. Example~\ref{EX_non-non}.
\end{remark}

\section{Parabolic self-maps of zero hyperbolic step}\label{S_para-zero}
In case of a parabolic self-map of zero hyperbolic step, Theorem~\ref{Thm:general} implies simultaneous solvability of Abel's equations as follows.
\begin{proof}[\proofof{Proposition~\ref{PR_abelian} for parabolic self-maps of zero hyperbolic step}] \label{proofTH_simultaneousparaboliczero}
By Theorem~\ref{Thm:model}, the canonical holomorphic model of $\varphi$ is of the form ${(\C,h,z\mapsto z+1)}$. Let $\psi\in\mathcal Z(\varphi)$. Then, by Theorem~\ref{Thm:general}, ${\psi=}{h^{-1}\circ g\circ h}$ for a suitable univalent function ${g:\C\to \C}$ satisfying ${g(w+1)}={g(w)+1}$ for all ${w\in \C}$. Since $g$ is univalent and entire, we have that it is affine, and the commutativity with the translation ${w\mapsto w+1}$ implies that there is ${b\in \C}$ such that ${g(w)=w+b}$ for all ${w\in \C}$.  Therefore, we get
$h(\psi (z))=g(h(z))=h(z)+b$,  i.e. the Koenigs function~$h$ of~$\varphi$ satisfies~\eqref{EQ_two-Abel-in-remark} with $c_{\varphi,\psi}:=b$.
\end{proof}

The above proof shows that if $\varphi\in\U(\UD)$ is a parabolic self-map of zero hyperbolic step, then the map we introduced in Theorem~\ref{PropA_varphi},
$$
  \Tmap_\varphi:\mathcal A_\varphi\to\Zen(\varphi);\quad b\,\mapsto\,\psi_b:=h^{-1}\circ(w\mapsto w+1)\circ h,
$$
is a bijection.  We identify its inverse ${\Smap_\varphi:\Zen(\varphi)\to\mathcal A_\varphi}$ in Theorem~\ref{Thm:0HS} below. Moreover, the method we use yields immediately that for any $\psi\in\Zen(\varphi)$, the number $\Smap_\varphi(\psi)$  is the unique value of~$c_{\varphi,\psi}$ for which the system of Abel's equations~\eqref{EQ_two-Abel-in-remark} admits a univalent solution.

It is known (see \cite[Theorem~5.2]{CDP}) that for any parabolic self-map ${\varphi:\UD\to\UD}$  with Denjoy\,--\,Wolff point $\tau\in \partial \D$ and Koenigs function $h$,
\begin{equation}\label{Eq:CDP}
\angle\lim _{z\to \tau} (\varphi(z)-z)h'(z)=1.
\end{equation}
This result is a key to finding an explicit expression for $\Smap_\varphi=\Tmap_\varphi^{-1}$. In fact, we will need a more general version, which we prove using ideas similar to those  given in \cite[Theorem~5.2]{CDP}.

\begin{proposition}\label{positivo2} Let $\varphi$ be a parabolic self-map of the unit disc (not necessarily univalent) with Denjoy\,--\,Wolff point $\tau\in\partial\D$ and let ${h:\UD\to\C}$ be univalent. Then
	$$
	\angle \lim_{z\to\tau}\dfrac{h(\varphi(z))-h(z)}{h^\prime(z)(\varphi(z)-z)}=1.
	$$
\end{proposition}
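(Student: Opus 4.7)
The plan is to write the difference quotient as an integral average of $h'$ along the chord from $z$ to $\varphi(z)$, and then show that this average tends to $h'(z)$ as $z\to\tau$ non-tangentially. Since $h$ is holomorphic on the convex domain $\UD$, the segment joining $z$ and $\varphi(z)$ lies in $\UD$, and the fundamental theorem of calculus gives
\begin{equation*}
h(\varphi(z)) - h(z) \,=\, (\varphi(z)-z)\int_0^1 h'(z_t)\, dt, \qquad z_t \,:=\, z + t(\varphi(z)-z).
\end{equation*}
Consequently the quotient in the statement equals $\int_0^1 h'(z_t)/h'(z)\, dt$, so it suffices to prove that $h'(z_t)/h'(z)\to 1$ as $z\to\tau$ non-tangentially, uniformly for $t\in[0,1]$.

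The next step is to control $|z_t-z|$. Since $\varphi$ is parabolic with Denjoy--Wolff point $\tau\in\partial\UD$, the Julia--Wolff--Carath\'eodory theorem yields $\anglim_{z\to\tau}(\varphi(z)-\tau)/(z-\tau) = \varphi'(\tau)=1$, so
\begin{equation*}
\frac{\varphi(z)-z}{z-\tau} \,=\, \frac{\varphi(z)-\tau}{z-\tau}-1 \,\longrightarrow\, 0
\end{equation*}
non-tangentially. As $|z-\tau|$ is comparable to $1-|z|$ inside any Stolz angle at $\tau$, the function $\varepsilon(z):=|\varphi(z)-z|/(1-|z|)$ tends to $0$ there. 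In particular, for every $t\in[0,1]$ we have $|z_t-z|\le \varepsilon(z)(1-|z|)$, and on the segment $[z,z_t]$ one has $1-|\zeta|\ge (1-\varepsilon(z))(1-|z|)$, provided $\varepsilon(z)<1$.

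The final ingredient is the classical pre-Schwarzian bound: from the univalence of $h$ and the estimate $\bigl|(1-|z|^2)h''(z)/h'(z)-2\bar z\bigr|\le 4$ we get $|h''(\zeta)/h'(\zeta)|\le 6/(1-|\zeta|^2)\le 6/(1-|\zeta|)$ throughout $\UD$. Since $h'$ has no zeros, we may integrate a continuous branch of $\log h'$ along $[z,z_t]$ and obtain
\begin{equation*}
\left|\log\frac{h'(z_t)}{h'(z)}\right| \,=\, \left|\int_z^{z_t}\frac{h''(\zeta)}{h'(\zeta)}\,d\zeta\right| \,\le\, \varepsilon(z)(1-|z|)\cdot \frac{6}{(1-\varepsilon(z))(1-|z|)} \,=\, \frac{6\,\varepsilon(z)}{1-\varepsilon(z)},
\end{equation*}
which tends to $0$ uniformly in $t\in[0,1]$. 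Therefore $h'(z_t)/h'(z)\to 1$ uniformly, and passing to the limit inside $\int_0^1 h'(z_t)/h'(z)\,dt$ gives the claim. The only delicate point is the uniform step $\varepsilon(z)\to 0$ in Stolz angles; everything else is a one-line application of Koebe's estimate to a chord whose Euclidean length is small compared to the distance to $\partial\UD$.
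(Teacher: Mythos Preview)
Your argument is correct. Both proofs rest on the same dynamical fact---that $\varphi(z)-z$ is $o(1-|z|)$ as $z\to\tau$ non-tangentially, which is Julia--Wolff--Carath\'eodory for a parabolic map---but the way this smallness is turned into the analytic conclusion differs. The paper transfers to the right half-plane via the Cayley map, rescales $H$ about $\xi$ to a normalized univalent function $g_\xi$ on~$\UD$, and then uses the Growth Theorem together with a Cauchy integral estimate to get $|g_\xi(w)-w|\le 54|w|^2$; the conclusion follows by plugging in $w_n=(\Phi(\xi_n)-\xi_n)/\Re\xi_n\to0$. You stay in the disc, parametrize the chord $[z,\varphi(z)]$ and reduce to $h'(z_t)/h'(z)\to1$ uniformly in~$t$, which you obtain by integrating the pre-Schwarzian bound $|h''/h'|\le 6/(1-|\zeta|)$ along a segment whose length is $\varepsilon(z)(1-|z|)$ with $\varepsilon(z)\to0$. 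Your route is shorter and avoids the half-plane change of variables; the paper's rescaling argument, on the other hand, makes the underlying Koebe-type normalization explicit and gives a quantitative second-order estimate rather than just a logarithmic distortion bound. Either way the heart of the matter is a Koebe distortion estimate applied on a scale small relative to the distance to~$\partial\UD$.
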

\begin{proof} Consider $\Phi:=C\circ\varphi\circ C^{-1}$, where $C$ is the usual Cayley map with a pole at~$\tau$; i.e. $C(z)=\frac{\tau+z}{\tau-z},\ z\in\D$. Then $\Phi$ is a parabolic self-map of the right half-plane $\H_r$ with Denjoy\,--\,Wolff point at~$\infty$. In particular,
\begin{equation}\label{Eq:positivo2}
\angle\lim_{w\to\infty}\frac{\Phi(w)}w=1.
\end{equation}
	
	Fix $\xi\in\H_r$ and denote $z:=C^{-1}(\xi)$ as well as $H:=h\circ C^{-1}$. Then, $H$ is a univalent function in~$\UH_r$ and ${h(\varphi(z))-h(z)}={H(\Phi(\xi))-H(\xi)}$. Moreover,
	$$
	h^\prime(z)=H^\prime(C(z))C^\prime(z)=H^\prime(\xi)\frac{2\tau}{(\tau-C^{-1}(\xi))^2}= H^\prime(\xi)\frac{(1+\xi)^2}{2\tau}.
	$$
	On the other hand,
	$$
	\varphi(z)-z=C^{-1}(\Phi(\xi))-C^{-1}(\xi)=2\tau\frac{\Phi(\xi)-\xi}
	{(1+\xi)(1+\Phi(\xi))}.
	$$
	Hence,
	$$
	\frac{h(\varphi(z))-h(z)}{h^\prime(z)(\varphi(z)-z)}= \frac{H(\Phi(\xi))-H(\xi)}{H^\prime(\xi)(\Phi(\xi)-\xi)}\,\frac{1+\Phi(\xi)}{1+\xi}.
	$$

\medskip
\noindent	Therefore, it is enough to check that
	$$	\angle\lim_{\xi\to\infty}\frac{H(\Phi(\xi))-H(\xi)}{H^\prime(\xi)(\Phi(\xi)-\xi)}=1.
	$$
	Consider the function
	$$
	g_\xi(w):=\frac{H(\xi+w\Re\xi)-H(\xi)}{H^\prime(\xi)\Re\xi},\quad  w\in \D.
	$$
	Note that $g\in\U(\D,\C)$ and  $g_\xi(0)=g_\xi^\prime(0)-1=0$. Moreover, we claim that
\begin{equation}\label{EQ_claim}
	|g_\xi(w)-w|\leq 54|w|^2\quad  \text{when~$~|w|<1/2$.}
\end{equation}

Assume for a moment that~\eqref{EQ_claim} holds and consider
	 an arbitrary sequence $(\xi_n)\subset\H_r$ converging non-tangentially to $\infty$. Denote ${w_n:=(\Phi(\xi_n)-\xi_n)/\Re \xi_n}$. By \eqref{Eq:positivo2},
\begin{equation}\label{EQ_to0}
	|w_n|=\left|\frac{\Phi(\xi_n)}{\xi_n}-1\right|\,\cdot\,\left|\frac{\xi_n}
	{\Re \xi_n}\right|
		~\longrightarrow~0\quad\text{as~$~n\to+\infty$}.
\end{equation}
	Therefore, omitting a finite number of terms, we may assume that  ${|w_{n}|<1/2}$ for all ${n\geq1}$.
	Using~\eqref{EQ_claim} and~\eqref{EQ_to0}, we therefore obtain
	$$ \lim_{n\to+\infty}\frac{H(\Phi(\xi_n))-H(\xi_n)}{H^\prime(\xi_n)(\Phi(\xi_n)-\xi_n)}=\lim_{n\to+\infty}\frac{g_{\xi_n}(w_n)}{w_n}=1.
	$$

It remains to prove~\eqref{EQ_claim}. Fix an arbitrary~$w$ with $|w|<1/2$. Using Cauchy's integral formula with the contour $\Gamma:=\{z:|z|=2/3\}$ oriented counterclockwise and the Growth Theorem, see e.g. \cite[Theorem~2.6 on p.\,33]{Duren}, we find that
\begin{align*}
	\big|g_{\xi}(w)-w\big| &=\big|g_{\xi}(w)-g_{\xi}(0)-g_{\xi}^{\prime }(0)w\big|= \\
		&=\displaystyle\left\vert \frac{1}{2\pi i}\int_\Gamma g_{\xi}(z)\Big( \frac{1
		}{z-w}-\frac{1}{z }-\frac{w}{z^{2}}\Big) \di z \right\vert  \\
		&=\displaystyle\left\vert \frac{1}{2\pi i}\int_\Gamma g_{\xi}(z)\frac{w^{2}}{%
			z^{2}(z-w)}\di z \right\vert  \\
		&\leq \displaystyle\frac{1}{2\pi }\int_\Gamma\frac{|z|}{(1-|z|)^{2}}\,
		\frac{|w|^{2}}{|z|^{2}\,|z-w|}\,|\di z |\leq 54|w|^{2}. \qedhere
\end{align*}	
\end{proof}

\begin{theorem} \label{Thm:0HS} Let $\varphi\in\U(\UD)$ be parabolic self-map of zero hyperbolic step. Then for any $\psi\in \Zen(\varphi)$ there exists finite angular limit
	$$
	 \Smap_{\varphi}(\psi):=\angle\lim_{z\to\tau}\frac{\psi(z)-z}{\varphi(z)-z},
	$$
where $\tau$ is the Denjoy\,--\,Wolff point of~$\varphi$. The map $\psi\mapsto\Smap_{\varphi}(\psi)$ is an isomorphism of the topological semigroup $[\Zen(\varphi),\circ]$ onto  $[\mathcal A_{\varphi},+]$. Moreover, ${\Smap_\varphi^{-1}=\Tmap_\varphi}$.
\end{theorem}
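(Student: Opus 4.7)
The plan is to reduce the ratio $(\psi(z)-z)/(\varphi(z)-z)$ to the translation identities furnished by Proposition~\ref{PR_abelian}, and then invoke the angular limit formula of Proposition~\ref{positivo2} twice, once for~$\varphi$ and once for~$\psi$. The case $\psi=\id_\UD$ is trivial ($\Smap_\varphi(\psi)=0=c_{\varphi,\psi}$), so I may assume $\psi\neq\id_\UD$; by Behan's theorem recalled in Section~\ref{S_preliminaries}, $\psi$~is then parabolic with the same Denjoy--Wolff point~$\tau$ as~$\varphi$, so that Proposition~\ref{positivo2} applies to~$\psi$ as well.

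Denote by $h$ the Koenigs function of~$\varphi$. Proposition~\ref{PR_abelian} provides a constant $c_{\varphi,\psi}\in\C$ such that $h\circ\varphi=h+1$ and $h\circ\psi=h+c_{\varphi,\psi}$. I would then write
$$
\frac{\psi(z)-z}{\varphi(z)-z}~=~\frac{(\psi(z)-z)\,h'(z)}{h(\psi(z))-h(z)}\,\cdot\,\frac{h(\psi(z))-h(z)}{h(\varphi(z))-h(z)}\,\cdot\,\frac{h(\varphi(z))-h(z)}{(\varphi(z)-z)\,h'(z)}.
$$
The middle factor is simply the constant $c_{\varphi,\psi}/1=c_{\varphi,\psi}$, while by Proposition~\ref{positivo2} the first factor (the reciprocal of the expression there evaluated for~$\psi$) and the third factor (the same expression evaluated for~$\varphi$) each have angular limit~$1$ at~$\tau$. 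Consequently the angular limit $\Smap_\varphi(\psi)$ exists and
\begin{equation}\label{Eq:0HS}
\Smap_\varphi(\psi)~=~c_{\varphi,\psi}.
\end{equation}

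It remains to verify $\Smap_\varphi=\Tmap_\varphi^{-1}$ and to upgrade the bijection to a topological-semigroup isomorphism. For any $b\in\mathcal A_\varphi$, the map $\Tmap_\varphi(b)=h^{-1}\circ(h+b)$ satisfies $c_{\varphi,\Tmap_\varphi(b)}=b$, hence by~\eqref{Eq:0HS} one has $\Smap_\varphi\circ\Tmap_\varphi=\id_{\mathcal A_\varphi}$. Conversely, for $\psi\in\Zen(\varphi)$ Proposition~\ref{PR_abelian} gives $\psi=h^{-1}\circ(h+c_{\varphi,\psi})$; the inclusion $\psi(\UD)\subset\UD$ translates into $\Omega+c_{\varphi,\psi}\subset\Omega$, so $c_{\varphi,\psi}\in\mathcal A_\varphi$ and $\Tmap_\varphi(\Smap_\varphi(\psi))=\psi$. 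By Theorem~\ref{PropA_varphi}, $\Tmap_\varphi$ is a continuous injective semigroup homomorphism and, by part~\ref{IT_Tmap-closed}, also closed; a continuous closed bijection is a homeomorphism, so $\Smap_\varphi$ is a continuous semigroup homomorphism, yielding the asserted isomorphism of topological semigroups. The only non-algebraic ingredient is securing the hypotheses of Proposition~\ref{positivo2} for the commuting map~$\psi$, and this is precisely where Behan's theorem plays its role; everything else is essentially bookkeeping around the factorisation above.
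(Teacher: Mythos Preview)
Your proof is correct and follows essentially the same approach as the paper's: invoke Proposition~\ref{PR_abelian} to obtain $h\circ\psi=h+c_{\varphi,\psi}$, apply Proposition~\ref{positivo2} once to~$\varphi$ and once to~$\psi$ to extract the angular limit, and then cite Theorem~\ref{PropA_varphi} for the continuity, closedness, and homomorphism properties of~$\Tmap_\varphi$. The only minor imprecision is that the claim ``$\psi$ is parabolic'' is due to Cowen rather than Behan (Behan gives only the coincidence of Denjoy--Wolff points); both facts are recalled in Section~\ref{S_preliminaries}, so your reference there is adequate.
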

\begin{proof}
Let $h$ be the Koenigs function of $\varphi$  and  let  $\psi\in {\mathcal Z(\varphi)\setminus\{\id_{\D}}\}$. By Proposition~\ref{PR_abelian}, there is $b\in \C$, $b\neq 0$, such that $h\circ \psi=h+b$. In the notation introduced in Theorem~\ref{PropA_varphi}, this means that $b\in \mathcal A_{\varphi}$ and $\psi=\psi_{b}=\Tmap_\varphi (b)$. As a consequence, the injective map $\Tmap_{\varphi}:\mathcal {A}_{\varphi}\to \Zen(\varphi)$ is, in fact, bijective.
Now, applying Proposition~\ref{positivo2} twice:  to~$\varphi$ and to~$\psi$, we deduce that
$$
\angle\lim _{z\to \tau} (\varphi(z)-z)h'(z)=1\quad\text{and}\quad\angle\lim _{z\to \tau} (\psi(z)-z)\frac{h'(z)}{b}=1.
$$
Therefore,
\begin{equation}\label{Eq:0HS}
b=\angle \lim_{z\to \tau}\frac{\psi(z)-z}{\varphi(z)-z},
\end{equation}
i.e.  $\Smap_{\varphi}(\psi):=\angle \lim_{z\to \tau}\frac{\psi(z)-z}{\varphi(z)-z}~$ is the inverse of $\Tmap_\varphi$.

It remains to recall that by Theorem~\ref{PropA_varphi}, the map $\Tmap_\varphi$ is continuous and closed, and moreover, it is a semigroup homomorphism. Thus, $\Tmap_\varphi$ and it inverse~$\Smap_\varphi$ are homeomorphic isomorphisms between the topological semigroups ${[\Zen(\varphi),\circ]}$ and ${[\mathcal A_\varphi,+]}$.
\end{proof}

\begin{remark}\label{RM_0HS}
 Theorem~\ref{Thm:0HS} implies the following characterization of the centralizers of a parabolic self-maps with zero hyperbolic step via their canonical models ${(\C,h,z\mapsto z+1)}$:
 $$
   \Zen(\varphi)=\big\{h^{-1}\circ (z\mapsto z+c)\circ h: c\in\C\setminus(-\infty,0),~ \Omega+c\subset\Omega\big\},\quad \Omega:=h(\UD).
 $$
(Here we took into account that according to Theorem~\ref{PropA_varphi}, if ${c\in(-\infty,0)}$, then $\Omega+c\not\subset\Omega$.)

A very similar characterization holds for hyperbolic self-maps, see Remark~\ref{RM_hyperb-character-in-term-of-the-model}.
\end{remark}

Concluding this section we prove our dichotomy relating embeddability with the local structure of the centralizer near the identity (for the case of a parabolic self-map of zero hyperbolic step). As usual, for ${w\in \C^*:=\C\setminus \{0\}}$ we denote by $\mathrm{Arg}(w)$ the principal argument, i.e. the value of the argument of~$w$ contained in~$(-\pi,\pi]$. In the notation introduced in Theorem~\ref{PropA_varphi}, the concluding result of this section can be stated as follows.

\begin{theorem}\label{PR_emb-dichotomy-P0HS} Let $\varphi\in\U(\UD)$ be parabolic of zero hyperbolic step. Exactly one of the following alternatives holds:
	\begin{itemize}
		\item[\rm (i)] either there exist $\rho>0$ and $\delta>0$ such that
		$$\mathcal A_{\varphi }\cap\{c\in\C: 0<|c|<\rho, |\mathrm{Arg}(c)|<\delta\}=\emptyset,$$
		\item[\rm (ii)] or there exists a unique one-parameter semigroup $(\phi_t)\subset\U(\UD)$ with $\phi_1=\varphi$.
	\end{itemize}
\end{theorem}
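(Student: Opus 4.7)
The plan is to show the two alternatives are mutually exclusive and that the negation of (i) forces (ii). Uniqueness of the semigroup in (ii) (once existence is established) is immediate from Corollary~\ref{CR_uniquesemigroup}, and since $\varphi\neq\id_\UD$ the semigroup is automatically non-trivial.

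For mutual exclusivity: if (ii) holds, then by Remark~\ref{RM_emb-ty_via_K-dom} the Koenigs domain $\Omega$ of $\varphi$ is starlike at infinity, so $[0,+\infty)\subset\mathcal A_\varphi$. Any $t\in(0,\rho)$ then lies in the sector $\{c:0<|c|<\rho,\ |\mathrm{Arg}(c)|<\delta\}$, so (i) fails.

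For the core implication, I would assume (i) fails. Then for each $n\in\Natural$ we can choose $c_n\in\mathcal A_\varphi$ with $0<|c_n|<1/n$ and $|\mathrm{Arg}(c_n)|<1/n$, so that $c_n\to 0$ and $\mathrm{Arg}(c_n)\to 0$. Fix $t>0$ arbitrarily and set $k(n):=\lfloor t/|c_n|\rfloor$. By Theorem~\ref{PropA_varphi}, $\mathcal A_\varphi$ is a closed additive subsemigroup of $\C$ containing $\Natural_0$, hence $k(n)c_n\in\mathcal A_\varphi$ for all sufficiently large $n$. Since $k(n)|c_n|\to t$ and $\mathrm{Arg}\bigl(k(n)c_n\bigr)=\mathrm{Arg}(c_n)\to 0$, we have $k(n)c_n\to t$, and closedness of $\mathcal A_\varphi$ yields $t\in\mathcal A_\varphi$. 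As $t>0$ is arbitrary and $0\in\mathcal A_\varphi$, it follows that $[0,+\infty)\subset\mathcal A_\varphi$, i.e.\ the Koenigs domain $\Omega$ of $\varphi$ satisfies $\Omega+t\subset\Omega$ for every $t\geq 0$. By Remark~\ref{RM_emb-ty_via_K-dom}, $\varphi$ is embeddable, and the explicit semigroup is $\phi_t:=h^{-1}\circ(w\mapsto w+t)\circ h$, where $h$ is the Koenigs function of~$\varphi$.

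I do not foresee any serious obstacle: the whole argument reduces to the closedness and additive-semigroup structure of $\mathcal A_\varphi$ supplied by Theorem~\ref{PropA_varphi}. The conceptual point is that, unlike in the proof of Theorem~\ref{TH_dichotomy-nonelliptic}, the Koenigs function of $\varphi$ already provides a univalent solution to Abel's equation (no normal-family limit is needed), and controlling the principal argument of the approximating elements $c_n$ is exactly what upgrades the mere existence of a non-trivial one-parameter semigroup in $\Zen(\varphi)$ to the embeddability of $\varphi$ itself.
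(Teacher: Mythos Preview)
Your proof is correct and follows essentially the same route as the paper's own argument: negate (i) to obtain a sequence $c_n\in\mathcal A_\varphi$ with $|c_n|\to0$ and $\mathrm{Arg}(c_n)\to0$, use the additive-semigroup and closedness properties of $\mathcal A_\varphi$ from Theorem~\ref{PropA_varphi} to show $[0,+\infty)\subset\mathcal A_\varphi$, and conclude via the Koenigs model and Corollary~\ref{CR_uniquesemigroup}. The only minor slip is the phrase ``for all sufficiently large $n$'': since $0\in\mathcal A_\varphi$ and $c_n\in\mathcal A_\varphi$, one has $k(n)c_n\in\mathcal A_\varphi$ for every $n$ regardless of whether $k(n)=0$.
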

\begin{proof} If (ii) holds, then $[0,+\infty)\subset \mathcal{A}_{\varphi}$ and, clearly, (i) does not hold.

Conversely, assume that (i) does not hold.
Then there is a sequence $\{r_{n}\}$ in $(0,+\infty)$ and a sequence $\{\theta_{n}\}$, both converging to zero, such that ${r_{n}e^{i\theta_{n}}\in \mathcal{A}_{\varphi}}$ for all ${n\in\Natural}$. Fix an arbitrary ${t>0}$.
For ${n\in\Natural}$, denote ${m_{n}:=\lfloor t/r_{n}\rfloor}$, the integer part of $t/r_{n}$.
By Theorem~\ref{PropA_varphi}, $\mathcal A_\varphi$ is a subsemigroup of ${[\C,+]}$. Hence, $m_{n}r_{n}e^{i\theta_{n}}\in \mathcal{A}_{\varphi}$ for all ${n\in\Natural}$. Moreover,
 $ m_{n}r_{n}\leq t<m_n r_n+r_{n} $ so that $\lim_{n\to+\infty}m_{n}r_{n}=t$.
It follows that ${t\in \mathcal{A}_{\varphi}}$, because by Theorem~\ref{PropA_varphi}, $\mathcal{A}_{\varphi}$ is a closed  set in~$\C$. Since $t>0$ is arbitrary in our argument, this proves that ${[0,+\infty)\subset\mathcal A_\varphi}$. As a consequence, the formula  ${\phi_{t}:=h^{-1}\circ (z\mapsto z+t)\circ h}$ defines a one-parameter semigroup in~$\UD$ with $\phi_1=\varphi$. The uniqueness of~$(\phi_t)$ is provided by Corollary~\ref{CR_uniquesemigroup}.
\end{proof}

It is worth mentioning that in Section~\ref{S_para-positive}, we obtain one more result valid for parabolic self-maps of zero hyperbolic step (and, at the same time, also for those of positive hyperbolic step); see Corollary~\ref{CR_parabolic-second-der} and remarks after it.

\section{Hyperbolic self-maps}\label{S_hyperbolic}

In this section we study the centralizers of hyperbolic self-maps. Although it is not explicitly stated in~\cite{Cowen-comm}, combining \cite[Theorem~3.1]{Cowen-comm} with some part of its proof, it is possible to deduce Theorem~\ref{TH_simultaneous} for hyperbolic self-maps. A similar remark concerns \cite{Vlacci} and~\cite{GB}. Moreover, Theorem~\ref{TH_simultaneous} restricted to the hyperbolic case follows also from \cite[Theorem~3.11]{Simultaneous}. Nevertheless, for the sake of completeness, below we provide a short direct proof (of Proposition~\ref{PR_abelian}, which implies Theorem~\ref{TH_simultaneous}) also for the hyperbolic case.

\begin{proof}[\proofof{Proposition~\ref{PR_abelian} for hyperbolic self-maps}] \label{proofTH_simultaneoushyp}
If ${(S,h,z\mapsto z+1)}$ is the canonical model for~$\varphi$, then  by Theorem~\ref{Thm:general} every ${\psi\in\Zen(\varphi)}$ is of the form $\psi=h^{-1}\circ g\circ h$, where $g\in\Hol(S)$ satisfies the functional equation ${g(w+1)=g(w)+1}$, ${w\in S}$. Since $\varphi$ is hyperbolic, $S$ is a horizontal strip. Hence, according to \cite[Lemma~2.1]{Heins}, ${g(w)=w+b}$ for all ${w\in S}$ and some constant ${b\in\Real}$. Then we are done.
\end{proof}

Before moving further, we make several remarks.

\begin{remark}\label{RM_h-varphi-psi-not-unique}
Unlike the case of a parabolic self-map of zero hyperbolic step, in the hyperbolic case, not every univalent solution to~\eqref{EQ_two-Abel-in-remark}  is of the form $h_*=h+c_0$, where $c_0\in\C$ is a constant and $h$ is the Koenigs map of~$\varphi$. For example, take a hyperbolic univalent self-map $\varphi$ such that $1/2\in \mathcal A_{\varphi}$ and consider  $\psi:=h^{-1}\circ(z\mapsto z+1/2)\circ h$, where $h$ stands, as before, for the Koenigs function of~$\varphi$. Clearly, $\psi\in\Zen(\varphi)$. Consider a conformal mapping $F$ of the base space $S$ of the canonical model ${(S,h,z\mapsto z+1)}$ for~$\varphi$. Assume that $F$ satisfies the functional equation ${F(z+1/2)=F(z)+1/2}$, ${z\in S}$, and that $F$ is different from a translation. For $\kappa$ small enough, ${F(z):=z+\kappa \sin (4\pi z)}$ is an example of such a mapping. (The univalence of~$F$ can be established using the Noshiro\,--\,Warschawski criterion.) It is immediate that both $h$ and $F\circ h$ solve~\eqref{EQ_two-Abel-in-remark} with ${c_{\varphi,\psi}=1/2}$.

At the same time, as the above proof of Proposition~\ref{PR_abelian} for the hyperbolic case shows, ${c_{\varphi,\psi}\in\Real}$. If $c_{\varphi,\psi}$ is irrational, then every holomorphic solution to~\eqref{EQ_two-Abel-in-remark} coincides with the Koenigs map~$h$ up to an additive constant. Indeed, by Remark~\ref{RM_factorization}, every solution to the first equation in~\eqref{EQ_two-Abel-in-remark} factorizes via the canonical model of~$\varphi$, i.e. $h_*=\eta\circ h$ for some holomorphic map of~$S$ satisfying ${\eta(z+1)=\eta(z)+1}$ for all ${z\in S}$. Moreover, both $h$ and $h_*$ satisfy the second equation in~\eqref{EQ_two-Abel-in-remark}. Hence, we also have ${\eta(z+c_{\varphi,\psi})=\eta(z)+c_{\varphi,\psi}}$  for all ${z\in S}$. As a consequence, ${\eta\circ(z\mapsto z+c)}={(z\mapsto z+c)\circ\eta}$ for all~$c$ belonging to the set $E:={\{k_1\,+\,k_2\,c_{\varphi,\psi}:k_1,k_2\in\Z\}}.$
Clearly, $E$ is dense in~$\Real$ (because irrational rotations have dense orbits).
With the help of the Identity Theorem for holomorphic functions, it follow that $\eta$ is a translation, as claimed.
\end{remark}

\begin{remark}\label{RM_c-phi-psi_unique}
Taking into account the previous remark, the uniqueness of the constant $c_{\varphi,\psi}$ for which the system of functional equations~\eqref{EQ_two-Abel-in-remark} admits a holomorphic solution requires some words of justification. Suppose that ${h_j\circ\varphi=h_j+1}$ and ${h_j\circ\psi=h_j+c_j}$, ${j=1,2}$, where ${h_1=h}$ is the Koenigs function of~$\varphi$ and ${h_2:\UD\to\Complex}$ is another holomorphic function. Then by the same argument as in Remark~\ref{RM_h-varphi-psi-not-unique}, we have that ${h_2=\eta\circ h_1}$ for some holomorphic function in~$S$ satisfying ${\eta(z+1)=\eta(z)+1}$ and ${\eta(z+c_1)=\eta(z)+c_2}$ for all ${z\in S}$.
These two identities can hold simultaneously only if ${c_2=c_1}$. Indeed, fix a straight line $L\subset S$ parallel to~$\R$. By the former identity, ${\mu(z):=\eta(z)-z}$ is periodic and hence, it is bounded on~$L$, but the latter identity implies that $\mu$ is unbounded on~$L$ unless ${c_2=c_1}$.
\end{remark}

Recall that in \thprop{} we introduced the homomorphic map $\Tmap_{\varphi}$ of the topologically closed subsemigroup $$\mathcal A_\varphi:= {\{b\in\Complex:\Omega+b\}}~\subset~[\C,+],$$
where $\Omega$ is the Koenigs domain of~$\varphi$, into the centralizer $\Zen(\varphi)$. In the previous section we have proved, see Theorem~\ref{Thm:0HS}, that for  parabolic self-maps of zero hyperbolic step, this map is a semigroup isomorphism. We will now show that this is the case for hyperbolic self-maps as well. We will also identify the inverse of $\Tmap_{\varphi}$.

\begin{theorem}\label{TH_isomorphism}
Let $\varphi\in\U(\UD)$ be a hyperbolic self-map with Denjoy\,--\,Wolff point ${\tau\in\partial\UD}$. Then the map $\Tmap_\varphi$ is an isomorphism of the topological semigroup ${[\mathcal A_\varphi,+]}$ onto $[\Zen(\varphi),\circ]$. Moreover, the inverse ${\Smap_\varphi:\Zen(\varphi)\to\mathcal A_\varphi}\,$ of $~\Tmap_\varphi$ is given by
	\begin{equation}\label{Eq:TH_isomorphism}
	\psi\in \Zen(\varphi)\mapsto \Smap_{\varphi}(\psi):=\frac{\log{\psi^\prime(\tau)}}{\log{\varphi^\prime(\tau)}}.
	\end{equation}
Furthermore, ${\mathcal A_\varphi=\Real}$ if ${\varphi\in\Aut}$, and ${\mathcal A_\varphi\subset[0,+\infty)}$ otherwise.
\end{theorem}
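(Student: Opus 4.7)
The theorem bundles three assertions: bijectivity of $\Tmap_\varphi$, the explicit form of its inverse, and the computation of $\mathcal A_\varphi$. I would handle them in that order.

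By Theorem~\ref{PropA_varphi}, $\Tmap_\varphi$ is already a continuous, injective, closed homomorphism of semigroups, so only surjectivity is missing. This is an immediate byproduct of the hyperbolic case of Proposition~\ref{PR_abelian} (just proved): for any $\psi\in\Zen(\varphi)$ one has $h\circ\psi=h+c$ for some $c\in\C$; the Heins-type argument in its proof forces $c\in\R$, and the inclusion $\Omega+c=h(\psi(\UD))\subset\Omega$ places $c\in\mathcal A_\varphi$. Hence $\psi=\Tmap_\varphi(c)$, and $\Tmap_\varphi$ upgrades to a homeomorphic semigroup isomorphism.

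The description of $\mathcal A_\varphi$ then follows quickly. Remark~\ref{RM_A_varphi} gives $\mathcal A_\varphi\subset\R$. If $\varphi\notin\Aut$, Theorem~\ref{PropA_varphi}\ref{PropA_notauto} rules out negative values, leaving $\mathcal A_\varphi\subset[0,+\infty)$. If $\varphi\in\Aut$, then $h(\UD)=h(\varphi(\UD))$ together with $h\circ\varphi=h+1$ yields $\Omega=\Omega+1$; iterating and combining with the absorption property (HM2) gives $\Omega=S_I$, whence $S_I+t\subset S_I$ for every $t\in\R$ and $\mathcal A_\varphi=\R$.

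The principal obstacle is the angular-derivative formula for $\Smap_\varphi(\psi)$. My plan is to convert the additive Abel equations into multiplicative Schr\"oder equations. Setting $\lambda:=\varphi'(\tau)\in(0,1)$, I would introduce
\begin{equation*}
\sigma(z):=\exp\bigl(h(z)\log\lambda\bigr),\qquad z\in\UD,
\end{equation*}
which is single-valued because $\log\lambda\in\R$. By Theorem~\ref{Thm:model}\ref{IT_HM-hyp} the strip $S_I$ containing $h(\UD)$ has width exactly $\pi/|\log\lambda|$, and a short check shows that this width is precisely the threshold making $\exp(\,\cdot\,\log\lambda)$ injective on $S_I$, so $\sigma$ is univalent. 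The Abel equations translate into
\begin{equation*}
\sigma\circ\varphi=\lambda\,\sigma,\qquad \sigma\circ\psi=\lambda^{c}\,\sigma,
\end{equation*}
where $c:=c_{\varphi,\psi}$. From here I would invoke the classical Valiron theory of univalent Schr\"oder linearizations at a hyperbolic Denjoy--Wolff point---or, equivalently, conjugate $\sigma$ and $\psi$ via the Cayley transform sending $\tau$ to $\infty$ and compute the angular derivative directly---to conclude that the Schr\"oder multiplier of $\psi$ coincides with its angular derivative, i.e., $\psi'(\tau)=\lambda^{c}$. Taking logarithms gives $c=\log\psi'(\tau)/\log\varphi'(\tau)$, and the unambiguity of this value is guaranteed by Remark~\ref{RM_c-phi-psi_unique}.
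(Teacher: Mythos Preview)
Your argument for surjectivity of $\Tmap_\varphi$ and for the structure of $\mathcal A_\varphi$ matches the paper's exactly: both rely on Proposition~\ref{PR_abelian} for the former and on Theorem~\ref{PropA_varphi}\ref{PropA_notauto}, Remark~\ref{RM_A_varphi}, and the absorption property for the latter.

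For the angular-derivative formula the two proofs diverge. The paper stays entirely within the canonical-model framework: it observes that if $h\circ\psi=h+c$ with $c>0$, then $(c^{-1}S,\,c^{-1}h,\,z\mapsto z+1)$ is the canonical model of~$\psi$ (the only thing to check is the absorption property, handled by Remark~\ref{RM_absorption-continuous}), and then reads off $\psi'(\tau)$ from the width of the rescaled strip via Theorem~\ref{Thm:model}\ref{IT_HM-hyp}. The automorphism case is disposed of separately using the explicit form $h=a\log f+ib$. Your route instead exponentiates Abel's equation into a Schr\"oder equation and appeals to Valiron's theory to identify the multiplier with the angular derivative. This is legitimate, but two points deserve care. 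First, the strip width $\pi/|\log\lambda|$ is \emph{half} of the injectivity threshold $2\pi/|\log\lambda|$ for $w\mapsto e^{w\log\lambda}$, not ``precisely the threshold'' as you write; the conclusion that $\sigma$ is univalent survives, but the statement should be corrected. Second, your final step---that a univalent Schr\"oder solution $\sigma\circ\psi=\mu\sigma$ forces $\mu=\psi'(\tau)$---is true but not entirely free: it is essentially equivalent to identifying the canonical model of~$\psi$, which is what the paper does directly. The paper's route is therefore more self-contained within the machinery already built, while yours trades that for a classical (but externally cited) fact.
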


\begin{proof} Let $h$ be the Koenigs function of $\varphi$ and let ${\psi\in\Zen(\varphi)\setminus\{\id_\UD\}}$.  By Proposition~\ref{PR_abelian}, there is a constant ${c_{\varphi,\psi}\neq 0}$ such that
\begin{equation}\label{EQ_Abel-for-psi}
h\circ \psi=h+c_{\varphi,\psi}.
\end{equation}
Moreover, from the proof of Proposition~\ref{PR_abelian} (for hyperbolic self-maps) given at the beginning of this section, it is evident that $c_{\varphi,\psi}\in\Real$.
Recall that the map ${\Tmap_{\varphi}:\mathcal A_{\varphi} \to \mathcal Z(\varphi)}$ is given by  ${\Tmap_{\varphi}(b)=\psi_{b}}:=h^{-1}\circ(h+b)$.
The functional equation~\eqref{EQ_Abel-for-psi} implies that ${c_{\varphi,\psi}\in \mathcal A_{\varphi}}$ and that ${\psi=\Tmap_{\varphi}(c_{\varphi,\psi})}$. Hence the map~$\Tmap_\varphi$ is surjective. Combining this with Theorem~\ref{PropA_varphi}, we easily conclude that $\Tmap_\varphi$ is an isomorphism of topological semigroups.

If $\varphi$ is an automorphism, then clearly, ${z\mapsto z+1}$ is an automorphism of~$\Omega:=h(\UD)$. As a consequence, the absorption property~(HM2) in Definition~\ref{DF_holomorphic-model} implies that ${\Omega=S}$. In particular, it immediately follows that $\mathcal A_\varphi=\Real$. It further follows that ${h(z)=a\,\log f(z)\,+\,ib}$ for all ${z\in\UD}$, where $f$ is a conformal mapping of~$\UD$ onto~$\H_r$ with ${f(\tau)=\infty}$ and ${a>0}$, ${b\in\Real}$ are suitable constants. Taking into account~\eqref{EQ_Abel-for-psi}, the proof of~\eqref{Eq:TH_isomorphism} in this case is elementary and hence we omit it.

Suppose that $\varphi\not\in\Aut$. Then by Theorem~\ref{PropA_varphi}\,\ref{PropA_notauto} and Remark~\ref{RM_A_varphi}, ${\mathcal A_\varphi\subset[0,+\infty)}$. In view of the formula for the width of the strip $S_I$ in Theorem~\ref{Thm:model}, to prove~\eqref{Eq:TH_isomorphism}  it sufficient to see that for any ${\psi\in\Zen(\varphi)\setminus\{\id_\UD\}}$, the triple ${(S_1,h_1,z\mapsto z+1)}$, where $S_1:={\{w/c_{\varphi,\psi}:w\in S\}}$ and $h_1:={h/c_{\varphi,\psi}}$, is the canonical model for~$\psi$.  To this end we only need to check that ${\bigcup_{n\in\Natural} \Omega-n\,c_{\varphi,\psi}}\,=\,S$. In turn, this equality follows from the inclusion ${\Omega\subset S}$ and from Remark~\ref{RM_absorption-continuous}. The proof is now complete.
\end{proof}

\begin{remark}\label{RM_hyperb-character-in-term-of-the-model}
The above theorem implies a description of the centralizer for a hyperbolic self-map $\varphi\in\U(\UD)$ in terms of its canonical model $(S,h,z\mapsto z+1)$. Let  $\Omega:=h(\UD)$. The following statements hold. (Compare with \cite[Corollary~2]{GB}.)
\begin{enumerate}
	\item[(A)] If $\varphi$ is an automorphism, then $\Omega$ is the horizontal strip $S$ and
	$$
	\Zen(\varphi)=\big\{h^{-1}\circ (z\mapsto z+c)\circ h: c\in\R\big\};
	$$
	\item[(B)] If $\varphi$ is not an automorphism, then
	$$
	\Zen(\varphi)=\big\{h^{-1}\circ (z\mapsto z+c)\circ h: c\in[0,+\infty),~ \Omega+c\subset\Omega\big\}.
	$$
\end{enumerate}
\end{remark}

\begin{remark}
Theorem~\ref{TH_isomorphism} implies the following rigidity result due to Bracci, Tauraso and Vlacci \cite[Theorem~2.4\,\,(2)\,\,and\,\,(5)]{BTV} in the special case of \textit{univalent} self-maps: if $\varphi\in\U(\UD)$ is hyperbolic with Denjoy\,--\,Wolff point~$\tau$ and if $\psi\in\Zen(\varphi)$ satisfies $\psi'(\tau)=\varphi'(\tau)$, then ${\psi\equiv\varphi}$.
\end{remark}

Yet another consequence of the above results is  that any two commuting univalent hyperbolic self-maps share the centralizer.

\begin{proposition} \label{Prop:equalcentralizer} Let $\varphi\in\U(\UD)$ be hyperbolic and let $\psi\in\Zen(\varphi)$, both of them different from~$\id_\UD$. Then $\Zen(\psi)=\Zen(\varphi)$.
\end{proposition}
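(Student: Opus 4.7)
The plan is to reduce the statement to Proposition~\ref{PR_abelian}, which asserts that the centralizer of any hyperbolic univalent self-map of~$\UD$ is abelian. The only preliminary step is to confirm that $\psi$ is itself hyperbolic. Since $\psi\in\Zen(\varphi)\setminus\{\id_\UD\}$ commutes with the hyperbolic~$\varphi$, Cowen's theorem recalled in Section~\ref{S_preliminaries} guarantees that $\psi$ is non-elliptic and hyperbolic (a parabolic self-map cannot commute with a hyperbolic one). Consequently Proposition~\ref{PR_abelian} applies to both $\varphi$ and $\psi$, so that both $\Zen(\varphi)$ and $\Zen(\psi)$ are abelian subsemigroups of~$\U(\UD)$.

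With this at hand, the double inclusion follows at once from the symmetry of the commutation relation. For the inclusion $\Zen(\varphi)\subset\Zen(\psi)$: given any $\chi\in\Zen(\varphi)$, the elements $\chi$ and $\psi$ both lie in the abelian semigroup $\Zen(\varphi)$, hence commute, whence $\chi\in\Zen(\psi)$. For the reverse inclusion $\Zen(\psi)\subset\Zen(\varphi)$: by symmetry of the defining identity ${\varphi\circ\psi=\psi\circ\varphi}$, the assumption $\psi\in\Zen(\varphi)$ is equivalent to $\varphi\in\Zen(\psi)$; thus for any $\chi\in\Zen(\psi)$, both $\chi$ and $\varphi$ lie in the abelian semigroup $\Zen(\psi)$, so they commute, yielding $\chi\in\Zen(\varphi)$.

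There is essentially no obstacle to overcome: the entire content of the proposition is encoded in the abelianness of centralizers of hyperbolic self-maps, together with the preservation of the hyperbolic type under commutation. The hypothesis $\psi\neq\id_\UD$ plays a double role, namely it allows $\psi$ to be classified dynamically (and thus to be identified as hyperbolic), and it is exactly what makes Proposition~\ref{PR_abelian} applicable to $\psi$ in its own right.
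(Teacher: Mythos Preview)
Your proof is correct and takes a genuinely different, more economical route than the paper's. The paper proceeds constructively: it identifies the canonical model of~$\psi$ as $(b^{-1}S,\,b^{-1}h,\,z\mapsto z+1)$ with $b=\Smap_\varphi(\psi)$, reads off from the strip shape of $b^{-1}S$ that $\psi$ is hyperbolic, and then shows via the explicit Koenigs representation that every $\phi\in\Zen(\psi)$ has the form $h^{-1}\circ(z\mapsto z+bc)\circ h$, whence $\phi\in\Zen(\varphi)$. You instead bypass all of this by invoking only the \emph{abelianness} clause of Proposition~\ref{PR_abelian} (together with Cowen's preservation of the hyperbolic type under commutation), and then the double inclusion is a two-line symmetry argument. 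Your approach is cleaner as a proof of the proposition itself; the paper's approach, however, yields the extra information recorded in the remark immediately following it, namely the explicit homothety $\mathcal A_\varphi=\Smap_\varphi(\psi)\,\mathcal A_\psi$, which your argument does not produce.
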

\begin{proof}
As we have seen in the proof of Theorem~\ref{TH_isomorphism},  if $(S,h,z\mapsto z+1)$ is the canonical model for~$\varphi$ and ${b:=\Smap_{\varphi}(\psi)}$,  then  $(S_1,h_1,z\mapsto z+1)$, where $S_1:=b^{-1}S$ and $h_1:=b^{-1}h$, is the canonical model for~$\psi$. (According to Theorem~\ref{TH_isomorphism}, ${b\neq0}$ because ${\psi\neq\id_\UD}$.)

Since $\varphi$ is hyperbolic, $S$ is a horizontal strip and hence, so is $S_1$. Thus, $\psi$ is also hyperbolic. As a consequence, by Proposition~\ref{PR_abelian}, for any ${\phi\in \mathcal Z(\psi)}$ there is $c\in \R$ such that
$$
\phi=h_1^{-1}\circ (z\mapsto z+c)\circ h_1= h^{-1}\circ (z\mapsto z+bc)\circ h.
$$
It follows that $\phi\in \mathcal Z(\varphi)$. So we may conclude that $\Zen(\psi)\subset\Zen(\varphi)$. Interchanging $\psi$ and $\varphi$ in this above argument, we see that also $\Zen(\varphi)\subset\Zen(\psi)$.
\end{proof}
\begin{remark} The proof of above proposition shows that if $\varphi$ and $\psi$ are hyperbolic self-maps that commute, both different from~$\id_\UD$, then $\mathcal A_\varphi=\Smap_\varphi(\Zen(\varphi))\subset\Real$ is homothetic to~$\mathcal A_\psi$. More precisely,
$$
 \mathcal A_\varphi=\Smap_\varphi(\psi)\mathcal A_\psi.
$$
\end{remark}

\begin{remark}
For parabolic self-maps, the conclusion of Proposition~\ref{Prop:equalcentralizer} may fail, see e.g. Examples~\ref{EX_parab-autom} and~\ref{EX_Z-non-abelian}. In both examples, the semigroup ${[\Zen(\varphi),\circ]}$ is not commutative. This means that there exist ${\psi_1,\psi_2\in\Zen(\varphi)}$ such that $\psi_1\not\in\Zen(\psi_2)$.  Note also that in the pairs of commuting parabolic self-maps $\varphi,\psi\in\U(\UD)$ with $\Zen(\varphi)\neq\Zen(\psi)$ that arise in these examples, at least one of the self-maps is of positive hyperbolic step. If $\varphi,\psi\in{\U(\UD)\setminus\{\id_\UD\}}$ are commuting parabolic self-maps and \textit{both} are of zero hyperbolic step, then it follows from Theorem~\ref{TH_simultaneous} combined with the essential uniqueness of univalent solutions to Abel's equation (which in turn can be established easily by examining the argument of Remark~\ref{RM_factorization}; see also \cite[Sect.\,3]{CDP-Abel}) that if $h$ is the Koenigs function of~$\varphi$, then $\Smap_\varphi(\psi)^{-1}h$ is the Koenigs function of~$\psi$. In this situation, Theorem~\ref{Thm:0HS} implies, as explained in Remark~\ref{RM_0HS}, that ${\Zen(\psi)=\Zen(\varphi)}$ and ${\mathcal A_\varphi=\Smap_\varphi(\psi)\mathcal A_\psi}$.
\end{remark}

As the next result shows, for a hyperbolic univalent self-map $\varphi$, embeddability is equivalent to $\id_\UD$ being an accumulation point of the centralizer, and in this aspect, the situation is simpler than in the parabolic case; compare with Theorems~\ref{TH_dichotomy-nonelliptic} and~\ref{PR_emb-dichotomy-P0HS}.

\begin{proposition}\label{Pro:hyp-dichotomy}
	Let $\varphi\in\U(\UD)$ be hyperbolic.
	\begin{Ourlist}
		\item\label{IT_PR-hyper-auto} If $\varphi$ is an automorphism, then there exists a unique continuous one-parameter group $(\phi_t)_{t\in\Real}\subset\U(\UD)$ with $\phi_1=\varphi$ and, in this case, $\Zen(\varphi)=\{\phi_t:t\in\R\}$.

\medskip

		\item\label{IT_PR-hyper-non-auto} If $\varphi$ is not an automorphism, then exactly one of the following alternatives hold:\smallskip
		\begin{itemize}
			\item[{\rm (i)}] either $\id_\UD$ is an isolated point of~$\Zen(\varphi)$ and, in this case, $\varphi$ is not embeddable,
			\item[{\rm (ii)}] or there exists a unique continuous one-parameter semigroup $(\phi_t)\subset\U(\UD)$ with $\phi_1=\varphi$ and, in this case $\Zen(\varphi)=\{\phi_t:t\ge0\}$.
		\end{itemize}
	\end{Ourlist}
\end{proposition}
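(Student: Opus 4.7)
The plan is to push the entire statement through the topological semigroup isomorphism $\Tmap_\varphi\colon[\mathcal A_\varphi,+]\to[\Zen(\varphi),\circ]$ provided by Theorem~\ref{TH_isomorphism}, writing $\phi_t:=\Tmap_\varphi(t)=h^{-1}\circ(z\mapsto z+t)\circ h$ whenever $t\in\mathcal A_\varphi$, where $h$ is the Koenigs function of~$\varphi$. Since $\Tmap_\varphi$ is a homeomorphism sending $0$ to $\id_\UD$, topological questions about a neighbourhood of $\id_\UD$ in $\Zen(\varphi)$ translate verbatim to topological questions about a neighbourhood of $0$ in the closed additive subsemigroup $\mathcal A_\varphi\subset\R$ (the inclusion in~$\R$ being Remark~\ref{RM_A_varphi}).

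For part~\ref{IT_PR-hyper-auto}, Theorem~\ref{TH_isomorphism} directly yields $\mathcal A_\varphi=\R$. The continuity and homomorphism properties of $\Tmap_\varphi$ from Theorem~\ref{PropA_varphi} make $(\phi_t)_{t\in\R}$ a continuous one-parameter group in $\U(\UD)$ with $\phi_1=\varphi$, and bijectivity of $\Tmap_\varphi$ gives $\Zen(\varphi)=\{\phi_t:t\in\R\}$. Uniqueness of the semigroup $(\phi_t)_{t\ge 0}$ with $\phi_1=\varphi$ is Corollary~\ref{CR_uniquesemigroup}; uniqueness of the extension to $t<0$ is then forced by the relation $\phi_t=(\phi_{-t})^{-1}$.

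For part~\ref{IT_PR-hyper-non-auto}, Theorem~\ref{TH_isomorphism} gives $\mathcal A_\varphi\subset[0,+\infty)$ closed with $1\in\mathcal A_\varphi$; the dichotomy ``$\id_\UD$ isolated in $\Zen(\varphi)$'' versus ``not isolated'' translates to ``$0$ isolated in $\mathcal A_\varphi$'' versus ``not isolated''. In case~(i), if $\varphi$ were embeddable in a continuous one-parameter semigroup $(\phi_t)_{t\ge 0}$, then each $\phi_t$ would commute with $\phi_1=\varphi$, hence lie in $\Zen(\varphi)$; since $\phi_t\to\id_\UD$ locally uniformly as $t\to 0^+$, $\id_\UD$ could not be isolated, a contradiction. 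In case~(ii), pick $a_n\in\mathcal A_\varphi\cap(0,+\infty)$ with $a_n\to 0$. For any fixed $t>0$ set $m_n:=\lfloor t/a_n\rfloor\in\N_0$; additivity of $\mathcal A_\varphi$ gives $m_na_n\in\mathcal A_\varphi$, the estimate $m_na_n\le t<m_na_n+a_n$ yields $m_na_n\to t$, and closedness of $\mathcal A_\varphi$ forces $t\in\mathcal A_\varphi$. Therefore $[0,+\infty)\subset\mathcal A_\varphi$, and the reverse inclusion from Theorem~\ref{TH_isomorphism} gives $\mathcal A_\varphi=[0,+\infty)$. The family $\phi_t:=\Tmap_\varphi(t)$, $t\ge 0$, is then the desired continuous one-parameter semigroup; bijectivity of $\Tmap_\varphi$ gives $\Zen(\varphi)=\{\phi_t:t\ge 0\}$; and uniqueness is again Corollary~\ref{CR_uniquesemigroup}. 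The two alternatives in~\ref{IT_PR-hyper-non-auto} are clearly mutually exclusive because in (ii) the sequence $(\phi_{1/n})$ accumulates at~$\id_\UD$.

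There is no real obstacle once Theorem~\ref{TH_isomorphism} is in hand; the only step of any substance is the closed-subsemigroup argument in case~(ii), which is a one-dimensional and thus strictly simpler variant of the reasoning in the proof of Theorem~\ref{PR_emb-dichotomy-P0HS}.
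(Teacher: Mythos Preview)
Your proof is correct and follows essentially the same route as the paper: both arguments push everything through the isomorphism $\Tmap_\varphi$ of Theorem~\ref{TH_isomorphism}, reduce to the closed additive subsemigroup $\mathcal A_\varphi\subset\Real$, and in case~(ii) show $\mathcal A_\varphi=[0,+\infty)$ by a density argument from a null sequence in~$\mathcal A_\varphi$. The only cosmetic difference is that you use the floor-function estimate $m_na_n\to t$ (as in the proof of Theorem~\ref{PR_emb-dichotomy-P0HS}), whereas the paper phrases the same step via $\varepsilon$-nets $\{kb_{n_\varepsilon}:k\in\N_0\}$; these are equivalent one-line arguments.
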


\begin{proof} First of all, recall that by Corollary~\ref{CR_uniquesemigroup} there can exist \textit{at most one} continuous one-paremeter (semi)\,group ${(\phi_t)\subset\U(\UD)}$ satisfying ${\phi_1=\varphi}$.\medskip

Further, denote by $h$ the Koenigs function of~$\varphi$.

\StepP{\ref{IT_PR-hyper-auto}} If $\varphi$ is a hyperbolic automorphism, then by Theorem~\ref{TH_isomorphism}, $\Zen(\varphi)=\Tmap_\varphi(\Real)$. It is easy to see that the functions $\phi_{t}:=\Tmap_\varphi(t)={h^{-1}\circ (z\mapsto z+t)\circ h}$ form a one-parameter group ${(\phi_t)_{t\in\Real}\subset\U(\UD)}$ and ${\phi_{1}=\varphi}$.

\StepP{\ref{IT_PR-hyper-non-auto}} If $\phi_1=\varphi$ for a continuous one-parameter semigroup~$(\phi_t)$, then $\{\phi_t:t\ge0\}\subset\mathcal Z(\varphi)$.
Taking into account that $\phi_t\to\id_\UD$ in $\U(\UD)$ as $t\to0^+$, we easily see that
if $\id_\UD$ is an isolated point of~$\mathcal Z(\varphi)$, then the alternative~(i) must hold.

Suppose now that $\id_\UD$ is not an isolated point of~$\mathcal Z(\varphi)$. By Theorem~\ref{TH_isomorphism},
$\Zen(\varphi)$ is the image of $\mathcal A_\varphi\subset[0,+\infty)$ w.r.t. the homeomorphic map
$$
 \Tmap_\varphi:\mathcal A_\varphi\,\ni\,b~\mapsto~ h^{-1}\circ(z\mapsto z+b)\circ h\,\in\,\U(\UD).
$$
Therefore, ${0=\Tmap^{-1}_\varphi(\id_\UD)}$ is an accumulation point of~$\mathcal A_{\varphi}$, i.e. $\mathcal A_\varphi$ contains a sequence $(b_n)\subset(0,+\infty)$ converging to~$0$.
Moreover, by Theorem~\ref{PropA_varphi}, $\mathcal A_\varphi$ is closed w.r.t. taking sums and also topologically closed as subset of~$\C$.
It follows that ${\mathcal A_\varphi=[0,+\infty)}$ because it contains the set
$E:={\{kb_n:k\in\Natural_0,\,n\in\Natural\}
}$, which is dense in ${[0,+\infty)}$. Indeed,
for each $\varepsilon>0$, $E$ contains an $\varepsilon$-net for ${[0,+\infty)}$: simply choose ${n_\varepsilon\in\Natural}$ such that $b_{n_\varepsilon}\in{(0,\varepsilon)}$ and observe that ${\{kb_{n_\varepsilon}:k\in\Natural_0\}}{\subset E}$.
Thus, $\Zen(\varphi)={\{\phi_t:t\ge0\}}$, where $\phi_t:=\Tmap_\varphi(t)$'s form a one-parameter semigroup $(\phi_t)$ in~$\UD$ and clearly, ${\phi_1=\varphi}$.
\end{proof}

Theorems~\ref{PropA_varphi} and~\ref{TH_isomorphism} show that given a hyperbolic self-map ${\varphi\in\U(\UD)\setminus\Aut}$, the set
$\Smap_\varphi\big(\Zen(\varphi)\big)=\,\,\mathcal A_{\varphi}$ is a closed subsemigroup of ${\big[[0,+\infty),+\big]}$. We conclude the section by proving a converse to this fact.

\begin{proposition} \label{PROP_reciprocal}
Let $\mathcal{A}$ be a closed additive subsemigroup of $[0,+\infty)$ that contains $\N_0$. Then there exists  a hyperbolic self-map $\varphi\in{\U(\UD)\setminus\Aut}$ such that ${\mathcal A_{\varphi}=\mathcal{A}}$.
\end{proposition}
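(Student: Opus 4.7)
The plan is to realize $\mathcal A$ as $\mathcal A_\varphi$ by constructing the Koenigs domain of $\varphi$ explicitly, as a horizontal strip of width~$1$ with a carefully chosen closed set hanging down from its lower boundary. Specifically, set
\[
 S:=\{z\in\C:0<\Im z<1\},\quad F:=-\mathcal A\subset(-\infty,0],\quad E:=F\times[0,1/2],\quad \Omega:=S\setminus E.
\]
Since $\mathcal A$ is closed in $[0,+\infty)$, so is $F$ in $\Real$, hence $E$ is closed in $\C$. The first, routine step is to verify that $\Omega$ is a simply connected domain strictly contained in $S$: connectedness follows because every point of $\Omega$ can be joined to the top half-strip $\{1/2<\Im z<1\}\subset\Omega$ by a vertical segment inside $\Omega$; simple connectedness follows because the ``obstacles'' in $E$ are anchored to the lower boundary $\{\Im z=0\}$, making the complement $\widehat\C\setminus\Omega=\{\Im z\le 0\}\cup E\cup\{\Im z\ge 1\}\cup\{\infty\}$ connected; and $0\in F$ implies $i/4\in E$, giving $\Omega\subsetneq S$.

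Next I would verify the absorption property $\bigcup_{n\ge 0}(\Omega-n)=S$, which reduces to $\bigcap_{n\ge 0}(E-n)=\varnothing$: for any $x\in\Real$, choosing $n>-x$ forces $-(x+n)<0$, hence $-(x+n)\notin\mathcal A\subset[0,+\infty)$. Since $1\in\Natural_0\subset\mathcal A$, we have $\Omega+1\subset\Omega$, so any Riemann map $h\colon\UD\to\Omega$ (translating $\Omega$ beforehand so that $0\in\Omega$, which does not affect the set $\mathcal A_\varphi$) gives rise to a univalent self-map $\varphi:=h^{-1}\circ(w\mapsto w+1)\circ h\in\U(\UD)$ with holomorphic model $(S,h,w\mapsto w+1)$. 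Theorem~\ref{Thm:model}\,\ref{IT_HM-hyp} then identifies $\varphi$ as hyperbolic (with multiplier~$e^{-\pi}$), and $\Omega\subsetneq S$ gives $\varphi\notin\Aut$.

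The main step---where the algebraic structure of $\mathcal A$ really intervenes---is to verify $\mathcal A_\varphi=\mathcal A$. By Theorem~\ref{PropA_varphi} this set equals $\{b\in\C:\Omega+b\subset\Omega\}$. For $b$ with $\Im b\ne 0$, choosing $z\in\Omega$ with $\Im z$ sufficiently close to~$0$ or~$1$ depending on $\sgn(\Im b)$ forces $z+b\notin S$, so such $b$ are excluded. For real $b$, the condition $\Omega+b\subset\Omega$ reduces to $F-b\subset F$; for $b\in\mathcal A$ the semigroup property gives
\[
 F-b=-\mathcal A-b=-(\mathcal A+b)\subset -\mathcal A=F.
\]
Conversely, for $b\in[0,+\infty)\setminus\mathcal A$ one has $-b\in F-b$ (via $0\in\mathcal A$) but $-b\notin F=-\mathcal A$; and for $b<0$ one has $|b|\in F-b$ (again via $0\in\mathcal A$) while $|b|\in(0,+\infty)$ is disjoint from $F\subset(-\infty,0]$. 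In both cases $F-b\not\subset F$, completing the identification $\mathcal A_\varphi=\mathcal A$.

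The only point requiring care---rather than a genuine obstacle---is guaranteeing simultaneously that $\Omega$ is simply connected and that (HM2) holds for the full strip $S$, uniformly in the structure of $\mathcal A$ (which could be discrete, perfect, contain accumulation points in $(0,+\infty)$, or fill half-lines). Anchoring $E$ to the lower boundary of $S$ and placing it exactly at the $x$-coordinates $-\mathcal A$ handles all these cases at once.
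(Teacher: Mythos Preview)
Your proof is correct and follows essentially the same construction as the paper: the domain $\Omega=S\setminus E$ with $E=(-\mathcal A)\times[0,1/2]$ is exactly the paper's $\{0<\Im w<1\}\setminus\bigcup_{b\in\mathcal A}\Gamma_b$, and your verification of $\mathcal A_\varphi=\mathcal A$ via the equivalence $\Omega+b\subset\Omega\iff F-b\subset F\iff \mathcal A+b\subset\mathcal A$ mirrors the paper's argument. Your write-up is in fact more explicit about connectedness, simple connectedness, and the exclusion of non-real~$b$, which the paper handles in a single sentence.
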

\begin{proof}
For each $b\in \mathcal A$, consider $\Gamma_{b}:=\{-b+iy: 0\leq y\leq 1/2\}$ and $$\Omega~:=~\big\{w: \, 0<\Im w<1\big\}\,~\big\backslash~\bigcup\nolimits_{b\in \mathcal A} \Gamma_{b}.$$
Since $\mathcal{A}$ is closed, we have that $\Omega $ is open. Clearly, for any ${c\in \R}$ the inclusion ${\Omega+c\subset \Omega}$ holds if and only if ${(\C\setminus\Omega)-c\subset\C\setminus\Omega}$, which by construction is equivalent to ${\mathcal A+c\subset\mathcal A}$. Since $\mathcal{A}$ is a semigroup containing $0$, the latter inclusion is further equivalent to ${c\in\mathcal A}$. Thus, ${\{c\in\Real:\Omega+c\subset\Omega\}}=\mathcal A$. In particular, it follows that $\Omega+1\subset\Omega$, because by the hypothesis $1\in\mathcal A$.

By the hypothesis $\mathcal A\subset[0,+\infty)$. As a consequence, $\bigcup_{n\in\Natural}\Omega-n$ coincides with the strip $S:={\{w: \, 0<\Im w<1\}}$. By construction, $\C\setminus\Omega$ does not have bounded connected components, so $\Omega$ is simply connected. Therefore, up to a translation, $\Omega$ is the Koenigs domain of a hyperbolic self-map $\varphi\in\U(\UD)$ given by  ${\varphi:=h^{-1}\circ (z\mapsto z+1)\circ h}$, where $h$ is a conformal map of~$\UD$ onto~$\Omega$.  For this self-map, we have $\mathcal A_\varphi={\{c\in\Real:\Omega+c\subset\Omega\}}=\mathcal A$, and it only remains to notice that ${\varphi\not\in\Aut}$ because ${\Omega\neq S}$.
\end{proof}

\section{Parabolic self-maps of positive hyperbolic step}\label{S_para-positive}
We start by proving our result on simultaneous linearization, i.e Theorem~\ref{TH_simultaneous}, for the remaining~--- and the most complicated~---  case of a parabolic self-map $\varphi\in\U(\UD)$ of positive hyperbolic step.
Replacing, if necessary,  $\varphi$ with $z\mapsto \overline{\varphi(\bar z)}$, throughout this section we will suppose, without loss of generality that the canonical model of~$\varphi$ is of the form ${(\UH,h,z\mapsto z+1)}$.

For parabolic self-maps of positive hyperbolic step, Theorem~\ref{TH_simultaneous} is contained in Proposition~\ref{PR_simultaneous-para-positive}. In the proof of the latter we make use of a lemma (Lemma~\ref{Lem:intertwining-exp}) given in the Appendix.

\begin{proof}[\proofof{Proposition~\ref{PR_simultaneous-para-positive}}] \label{proofTH_simultaneousparpositive} Let $\varphi$ be a parabolic self-map of $\D$ of positive hyperbolic step. Without loss of generality we may assume that the base space of its canonical model ${(S,h,z\mapsto z+1)}$ is ${S=\UH}$. As before, denote ${\Omega:=h(\UD)}$.

Firstly, for each fixed $\psi\in\mathcal Z(\varphi)$, we will construct a univalent function ${h_*=h_{\varphi,\psi}}$ satisfying system~\eqref{EQ_two-Abel-in-remark} for a suitable constant $c_{\varphi,\psi}$. Secondly, we will show how this construction leads to the common solution $h_{\varphi,\Delta}$.

By Theorem~\ref{Thm:general}, for any ${\psi\in\mathcal Z(\varphi)}$ there exists $g\in\U(\UH)$ with ${g(\Omega)\subset\Omega}$ such that ${g\circ h}={h\circ\psi}$ and
$
   {g(w+1)=g(w)+1}
$ for all ${w\in\UH}$.
  By Lemma~\ref{Lem:intertwining-exp}\,\ref{IT_periodic-univalent},  there exists $f\in\U(\UD)$ with ${f(0)=0}$
such that
\begin{equation}\label{EQ_f-and-g}
 \exp\big(2\pi i\, g(w)\big)=f\big(e^{2\pi i\, w}\big), \quad\text{for all~$w\in\H$.}
\end{equation}

By the Schwarz Lemma, $|f'(0)|\le 1$ and if ${|f'(0)|=1}$, then there is ${c\in \R}$ such that ${g(z)=z+c}$ for all ${z\in\UH}$, and as a result, ${h_*=h_{\varphi,\psi}:=h}$, ${c_{\varphi,\psi}:=c}$ satisfy~\eqref{EQ_two-Abel-in-remark}. Therefore, we may assume that  $0{<|f'(0)|<1}$. In this case, by Theorem~\ref{Thm:model}, $f$ admits a holomorphic model of the form ${(\C,h_0, w \mapsto \lambda w)}$ with ${\lambda:=f'(0)\in\UD}$.
This means that $h_0$ is univalent in~$\UD$, $h_0(0)=0$, and
\begin{equation}\label{EQ_Schr-h_0}
 h_0\circ f = \lambda h_0.
\end{equation}

By Lemma~\ref{Lem:intertwining-exp}\,\ref{IT_periodic-univalent-converse}, applied with $f$ replaced by~$h_0$, we have $h_0(e^{2\pi i w})={\exp\big(2\pi i F(w)\big)}$ for all ${w\in\UH}$, where ${F:\UH\to\C}$ is univalent and satisfies $F(w+1)={F(w)+1}$. Taking into account~\eqref{EQ_f-and-g} and~\eqref{EQ_Schr-h_0}, for all ${w\in\UH}$ we obtain
$$
 \exp\big(2\pi i F(g(w))\big)=h_0\big(e^{2\pi i g(w)}\big)=h_0\big(f(e^{2\pi i w})\big)=\lambda h_0\big(e^{2\pi i w}\big)=\lambda\exp\big(F(w)\big).
$$
Hence, $F\circ g=F+c$ for a suitable constant $c\in\C$ satisfying ${e^{2\pi i c}=\lambda}$. Thus,
\begin{align*}
F\circ h\circ\varphi &= F\circ (h+1)=(F\circ h)+1,\quad \text{and}\\
F\circ h\circ\psi    &= F\circ g\circ h = (F\circ h)+c,
\end{align*}
i.e. ${h_*=h_{\varphi,\psi}:=F\circ h}$, ${c_{\varphi,\psi}:=c}$ satisfy~\eqref{EQ_two-Abel-in-remark}.

\medskip
We also note that since $F$ is univalent in~$\UH$, since $F(\UH)+1=F(\UH)$, and since
$$
  Q\Big(\bigcup\nolimits_{n\in\Natural}F(\UH)-nc\Big)~=~\bigcup\limits_{n\in\Natural}Q\big(F(\UH)-nc\big)%
    ~=~ \bigcup\limits_{n\in\Natural}\lambda^{-n}h_0(\UD^*)~=~\C^*,~{}~{}~ {Q(w):=e^{2\pi i w}},
$$
we have that $(\C,H,z\mapsto z+1)$, where $H:=c^{-1}F$, is a holomorphic model for $g:\UH\to\UH$. In particular, $g$ is parabolic of zero hyperbolic step.

\smallskip

Now let $\Delta\subset\Zen(\varphi)$ be such that any two elements of~$\Delta$ commute. If for any $\psi\in\Delta$, the function $g:=h\circ\psi\circ h^{-1}$ is of the form~${z\mapsto z+c}$ for some constant ${c=c_\psi\in\Real}$, then as we have seen above, for every ${\psi\in\Delta}$ the same univalent function ${h_{\varphi,\Delta}:=h}$ satisfy system~\eqref{EQ_simultaneous} if we set ${c_{\varphi,\psi}:=c_\psi}$; i.e. in this case, the conclusion of Proposition~\ref{PR_simultaneous-para-positive} holds with $\beta_\Delta:=\id_\UH$.

So assume now that there exists $\psi\in\Delta$ of the form $\psi={h^{-1}\circ g\circ h}$, where ${g\in\Zen_\UH(z\mapsto z+1)}$ and ${g(w)-w}$ is not a real constant. Consider an arbitrary $\widetilde\psi\in\Delta$. Then ${\widetilde\psi=h^{-1}\circ \tilde g\circ h}$ for some ${\tilde g\in\Zen_\UH(z\mapsto z+1)}$. At the same time, $\psi$ and $\widetilde\psi$ commute, and hence, ${\tilde g\in\Zen_\UH(g)}$. As we have seen above, $g$ is parabolic of zero hyperbolic step and, up to an additive constant, $H$ is the Koenigs function of~$g$. Therefore, according to Proposition~\ref{PR_abelian}, which we proved in Sect.\,\ref{S_para-zero}, $\tilde g={H^{-1}\circ(\zeta\mapsto\zeta+\tilde c)\circ H}$ for a suitable constant ${\tilde c\in\C}$. It follows that~${F\circ h\circ\widetilde\psi}={F\circ \tilde g\circ h}={F\circ h+c\tilde c}$. This means that $h_{\varphi,\Delta}:=F\circ h$ is the desired common solution to~\eqref{EQ_simultaneous}, i.e. the conclusion of Proposition~\ref{PR_simultaneous-para-positive} holds with ${\beta_\Delta:=F}$.
\end{proof}

\begin{remark}
As we have shown, see Theorems~\ref{PropA_varphi}, \ref{Thm:0HS} and~\ref{TH_isomorphism}, if $\varphi\in\U(\UD)$ is hyperbolic or parabolic of zero hyperbolic step, then ${\Zen(\varphi)=\Tmap_\varphi(\mathcal A_\varphi)}$. However, for parabolic self-maps of \textit{positive} hyperbolic step, the situation is different. By Proposition~\ref{positivo1}, which we state and prove below, ${\psi\in\Zen(\varphi)}$ belongs to~$\Tmap_\varphi(\mathcal A_\varphi)$ if and only the corresponding function $F$ in representation~\eqref{EQ_represent-P-PHS} is constant, and the latter is not always the case, as numerous examples in Section~\ref{S_examples} illustrate.
\end{remark}

In order to proceed, we need to introduce some more notation.
Consider a map $\varphi \in \mathrm{Hol}(\mathbb{D},\mathbb{C})$ with a
boundary fixed point $\sigma\in \partial \mathbb{D}$. We say that $\varphi $ is
of angular-class of order $p\in \mathbb{N}$ at $\sigma,$ and we denote it by $%
\varphi \in C_{A}^{p}(\sigma),$ if
\begin{equation*}
	\varphi (z)=\sigma+\sum_{k=1}^{p}\frac{a_{k}}{k!}(z-\sigma)^{k}+\gamma (z),\quad \quad
	\text{ }z\in \mathbb{D},
\end{equation*}%
where $a_{1},...,a_{p}\in \mathbb{C}$ and $\gamma \in \mathrm{Hol}(\mathbb{D},\mathbb{C})$ with
\begin{equation*}
	\angle \lim_{z\rightarrow b}\frac{\gamma (z)}{(z-\sigma)^{p}}=0.
\end{equation*}%
It is clear that the numbers $a_{1},...,a_{p}$ appearing in the above
expression are necessarily unique. They are called angular derivatives of higher orders. Correspondingly, we will use the notation  ${\varphi^{(k)}_A}(\sigma):=a_k$. Following the tradition we will keep omitting the subscript ``$A$'' for the angular derivative of the first order.

Further, we define $C_A^{\,\infty}(\sigma):=\bigcap_{p\in\Natural}C_A^p(\sigma)$.
Given a map $\phi\in\Hol(\H)$ with $${\angle \lim_{w\to\infty}\phi(w)=\infty},$$ we say that $\phi\in C^p_A(\infty)$ for some $p\in\Natural\cup\{\infty\}\,$ if ${C^{-1}\circ \phi\circ C}\in  C_{A}^{p}(1)$, where $C(z):=i\frac{1+z}{1-z}$, ${z\in\D}$.

\begin{proposition}\label{positivo1} Let $\varphi\in\U(\D)$ be a parabolic self-map of positive hyperbolic step with Denjoy\,--\,Wolff point $\tau\in\partial\D$ and with canonical model $(\UH,h,z\mapsto z+1)$.  Then, $\psi\in\Zen(\varphi)$ if and only if it admits the following representation
\begin{equation}\label{EQ_represent-P-PHS}
\psi~=~h^{-1}\,\circ\, \big(w \mapsto  w+F(e^{2\pi i w})\big)\,\circ\, h,
\end{equation}
where $F\in\Hol(\UD,\UH\cup\Real)$ is such that $g(w):=w+F(e^{2\pi i w})$ is univalent in~$\UH$ and maps ${\Omega:=h(\D)}$ into itself.

Moreover, every $\psi\in\Zen(\varphi)\setminus\{\id_\UD\}$ is a parabolic self-map of $\D$ with Denjoy\,--\,Wolff point~$\tau$, and the (uniquely defined) corresponding function~$g$  is a parabolic self-map of~$\UH$ with Denjoy\,--\,Wolff point at~$\infty$ and it belongs to $C^{\,\infty}_A(\infty)$.
\end{proposition}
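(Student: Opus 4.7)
The plan is to apply Theorem~\ref{Thm:general}, which identifies $\Zen(\varphi)$ with the set of maps $\psi=h^{-1}\circ g\circ h$ where $g:\UH\to\UH$ is univalent, $g(\Omega)\subset\Omega$, and $g(w+1)=g(w)+1$ for every $w\in\UH$. The commutation relation forces $\Phi(w):=g(w)-w$ to be $1$-periodic on $\UH$, and hence $\Phi$ descends through the universal covering $w\mapsto e^{2\pi i w}$ to a holomorphic function $F_0:\UD^*\to\C$ satisfying $\Phi(w)=F_0(e^{2\pi i w})$. The main task is to show that $F_0$ extends holomorphically across $0$ to a function $F\in\Hol(\UD,\UH\cup\R)$; once this is in hand, the stated representation~\eqref{EQ_represent-P-PHS} is automatic.

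To extend $F_0$, I would invoke Lemma~\ref{Lem:intertwining-exp}\,\ref{IT_periodic-univalent}, exactly as in the proof of Proposition~\ref{PR_simultaneous-para-positive}, obtaining $f\in\U(\UD)$ with $f(0)=0$ and $e^{2\pi i g(w)}=f(e^{2\pi i w})$. Dividing through by $e^{2\pi i w}$ gives $e^{2\pi i F_0(\zeta)}=f(\zeta)/\zeta$ on $\UD^*$. Since $f$ is univalent with a simple zero at the origin, the quotient $f(\zeta)/\zeta$ extends to a nowhere vanishing holomorphic function on the whole disc, so it admits a holomorphic logarithm branch there; adjusting this branch by an integer constant provides the required holomorphic extension $F$ of $F_0$ to $\UD$. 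The Schwarz lemma applied to $f$ yields $|f(\zeta)|\le|\zeta|$, hence
\[
\Im F(\zeta)=\frac{\log|\zeta|-\log|f(\zeta)|}{2\pi}\ge 0,
\]
so $F(\UD)\subset\UH\cup\R$, as needed. The converse direction is immediate: any such $F$ produces $g(w):=w+F(e^{2\pi i w})$ with $\Im g(w)\ge\Im w>0$ and $g(w+1)=g(w)+1$; coupling this with the standing assumptions that $g$ is univalent and $g(\Omega)\subset\Omega$, Theorem~\ref{Thm:general} places the resulting $\psi$ in $\Zen(\varphi)$.

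For the \emph{moreover} assertion, the parabolicity of $\psi\in\Zen(\varphi)\setminus\{\id_\UD\}$ with Denjoy--Wolff point $\tau$ is a direct consequence of the Behan and Cowen theorems recalled in the introduction. To analyse the corresponding~$g$, note first that any interior fixed point $w_0\in\UH$ of $g$ would, via the commutation with $w\mapsto w+1$, produce a second interior fixed point $w_0+1$, forcing $g=\id_\UH$; hence $g$ is non-elliptic. The Denjoy--Wolff point of $g$ cannot lie in $\R$ either: if $g^{\circ n}(w)\to a\in\R$ for every $w\in\UH$, then also $g^{\circ n}(w+1)=g^{\circ n}(w)+1\to a+1$, contradicting the uniqueness of the Denjoy--Wolff limit. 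So the Denjoy--Wolff point of $g$ is $\infty$. Because $|e^{2\pi i w}|=e^{-2\pi\Im w}$ decays faster than any power of $1/|w|$ as $w\to\infty$ non-tangentially in $\UH$, the correction $F(e^{2\pi i w})-F(0)$ together with all its derivatives decays likewise; in particular $\angle\lim_{w\to\infty}g(w)/w=1$, so $g$ is parabolic, and transporting via the Cayley map $C(z)=i(1+z)/(1-z)$ produces the asymptotic expansion of $\tilde g:=C^{-1}\circ g\circ C$ at $z=1$ demanded by the $C_A^\infty(1)$ definition.

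The main obstacle I anticipate is the clean verification of the $C_A^\infty(\infty)$ claim: the underlying idea is transparent~--- super-polynomial decay of $e^{2\pi i w}$ in any Stolz region at $\infty$ swamps all orders appearing in the Cayley conjugation~--- but writing out the successive angular derivatives and confirming the vanishing of every coefficient beyond the linear term requires careful bookkeeping.
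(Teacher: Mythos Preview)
Your argument is correct, and for the representation~\eqref{EQ_represent-P-PHS} it takes a genuinely different path from the paper. The paper does not invoke Lemma~\ref{Lem:intertwining-exp}\,\ref{IT_periodic-univalent} and the Schwarz Lemma; instead it argues via the Julia\,--\,Wolff angular derivative: since $g\in\Hol(\UH)$ satisfies $g(w+1)-g(w)=1$, integrating $g'$ over $[w,w+1]$ and letting $w\to\infty$ non-tangentially gives $g'(\infty)=1$, whence $\inf_{w\in\UH}\Im g(w)/\Im w=1$ and therefore $\Im(g(w)-w)\ge0$; this already places $F_0(\UD^*)\subset\overline\UH$, and the removable singularity at $0$ follows. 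Your route through the lifted univalent $f$ and the Schwarz Lemma is more elementary and self-contained (no angular-derivative theory), while the paper's route is shorter once one has the angular-derivative machinery at hand.

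On the $C_A^{\,\infty}(\infty)$ claim, your worry about bookkeeping through the Cayley map is unnecessary. The paper avoids Cayley entirely and simply observes, for each $k\in\Natural$,
\[
\angle\lim_{w\to\infty} w^{k}\big(g(w)-(w+F(0))\big)
~=~\angle\lim_{w\to\infty} w^{k}\big(F(e^{2\pi i w})-F(0)\big)~=~0,
\]
because $|e^{2\pi i w}|=e^{-2\pi\,\Im w}$ decays super-polynomially in any Stolz angle at~$\infty$. This single line yields both the parabolicity of $g$ with Denjoy\,--\,Wolff point~$\infty$ and the full $C_A^{\,\infty}$ expansion; your separate dynamical argument (no interior fixed point, Denjoy\,--\,Wolff limit forced to~$\infty$) is correct but not needed once this computation is in place.
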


\begin{proof} Thanks to Theorem~\ref{Thm:general}, in order to prove the representation~\eqref{EQ_represent-P-PHS} it is sufficient to check that $\mathcal B_1:=\{g\in\Zen_\H(w\mapsto w+1):g(\Omega)\subset\Omega\}$ coincides with the class $\mathcal B_2$ formed by all univalent functions ${g:\UH\to\C}$ mapping $\Omega$ into itself and having the form ${g(w)=w+F(e^{2\pi i w})}$, ${w\in\UH}$, where $F$ is holomorphic in~$\UD$ with ${\Im F\ge0}$.

It is easy to see that $\mathcal B_2\subset \mathcal B_1$.
To show that $\mathcal B_1\subset \mathcal B_2$, fix $g\in\mathcal B_1$ and denote $f_g:=g-\id_\H$. Then $f_g(w+1)=f_g(w)$ for every $w\in\H$.  By Lemma~\ref{Lem:intertwining-exp}\,\ref{IT_periodic-function},
there exists $F_g\in \Hol(\D^*)$ such that $f_g(w)=F_g\big(e^{2\pi i\, w}\big)$ for all ${w\in\H}$.
It is known, see e.g.~ \cite[\S26]{Valiron}, that for any ${g\in\Hol(\UH)}$, the angular limit $\anglim_{w\to\infty} g'(w)$ exists finitely and coincides with the angular derivative $g'(\infty):={\anglim_{z\to\infty}g(w)/w}$, which in turn is equal to ${\inf_{w\in\UH}\Im g(w)/\Im w}$. Since ${g\in\mathcal B_1}$, then  in addition we have ${\int_w^{w+1}g'(\zeta)\di\zeta}={g(w+1)-g(w)}{=1}$ for any ${w\in\UH}$. Hence, ${g'(\infty)=1}$ and, as a consequence,  ${f_{g}(\H)\subset \H\cup\Real}$. In particular, it follows that $0$~is a removable singularity of~$F_g$, i.e. in reality $F_g$ is holomorphic in~$\UD$. This shows that ${g\in\mathcal B_2}$. Thus, we have proved that ${\mathcal B_1=\mathcal B_2}$.

Given ${g\in\mathcal B_2}$, for any $k\in\Natural\,$ we have
$$
\anglim_{w\to\infty}w^k\big(\,g(w)\,-\,\big(w+F_{g}(0)\big)\,\big)=\anglim_{w\to\infty}w^k\big(F_g(e^{2\pi i w})-F_g(0)\big)=0.
$$
Clearly, it follows that $g$ is parabolic with the Denjoy\,--\,Wolff point at~$\infty$ and that ${g\in C^{\,\infty}_A(\infty)}$.

The fact that every $\psi\in\Zen(\varphi)\setminus\{\id_\UD\}$ is parabolic provided that $\varphi$ is parabolic and that $\psi$ and $\varphi$ have the same Denjoy\,--\,Wolff point is known, see \cite[Theorem~6]{Behan}, \cite[Corollary~4.1]{Cowen-comm}, and \cite[Section~4.10]{Abate2}. Below we give another proof of this fact (restricted to univalent self-maps $\varphi$ and~$\psi$) based on Theorem~\ref{TH_simultaneous} and Theorem~\ref{TH_ch-para-hyper}, which will be proved in the Appendix. For a map ${h:\UD\to\Complex}$ and a number ${c\in\C^*}:={\C\setminus\{0\}}$ we denote $S[h;c]:=\bigcup_{n\in\Natural}\big(h(\UD)-nc\big)$; in the special case ${c=1}$ we will write $S[h]$ instead of $S[h;1]$.

Let $\psi\in\Zen(\varphi)\setminus\{\id_\UD\}$. By Theorem~\ref{TH_simultaneous} applied to $\Delta:=\{\psi\}$, for a suitable $c_{\varphi,\psi}\in\C$ the system~\eqref{EQ_simultaneous} has a univalent solution, which we denote by $h_{\varphi,\psi}$. Clearly, $c_{\varphi,\psi}\neq0$ because $\psi\neq\id_\UD$. Since $\varphi$ is parabolic, by Theorem~\ref{TH_ch-para-hyper}\,\ref{IT_(D)}, $S[h_{\varphi,\psi}]$ contains a horizontal half-plane.  Moreover, adding to $h_{\varphi,\psi}$ a suitable constant and replacing, if necessary, the functions $\varphi$, $\psi$, and $h_{\varphi,\psi}$ with their transforms of the form $f~\mapsto~\big(z\mapsto\overline{f(\bar z)}\big)$, we may assume that ${\UH\subset S[h_{\varphi,\psi}]}$. In view of Theorem~\ref{TH_ch-para-hyper}\,\ref{IT_(D)} and Remark~\ref{RM_ch-para-hyper}, to show that $\psi$ is parabolic, it suffices to prove that $S[h_{\varphi,\psi};c_{\varphi,\psi}]$ contains a half-plane. Denote ${\Omega:=h_{\varphi,\psi}(\UD)}$ and notice that by~\eqref{EQ_simultaneous},
\begin{equation}\label{EQ_invar-wrt-two-transl}
 \Omega+1\subset\Omega\quad \text{and}\quad \Omega+c_{\varphi,\psi}\subset\Omega.
\end{equation}
Consider the following four cases.

\StepC{1a}{$c_{\varphi,\psi}\in(-\infty,0)\setminus\mathbb{Q}$} According to~\eqref{EQ_invar-wrt-two-transl}, ${\Omega+c\subset\Omega}$ for all~$c$ from the set $\{{n+c_{\varphi,\psi}k}:{n,k\in\N_0}\}$, which is dense in~$\Real$. Taking into account that ${\Omega\neq\C}$ and that ${S[h_{\varphi,\psi}]\supset\UH}$, we easily conclude that $\Omega$ is a half-plane containing~$\UH$. In this case, $\varphi$ and $\psi$ are parabolic automorphisms with the same Denjoy\,--\,Wolff point ${\tau=h_{\varphi,\psi}^{-1}(\infty)}$.

\StepC{1b}{$c_{\varphi,\psi}\in(-\infty,0)\cap\mathbb{Q}$} As in the previous case,  ${\Omega+c\subset\Omega}$ for all $c\in\{{n+c_{\varphi,\psi}k}:{n,k\in\N_0}\}=:E$. It is easy to see that ${E\supset\Z}$. As a result, ${w\mapsto w+1}$ is an automorphism of~$\Omega$ and hence ${\varphi\in\Aut}$. Moreover, writing $c_{\varphi,\psi}=-p/q$ with ${p,q\in\N}$, we see that $\psi^{\circ q}$ is the inverse of~$\varphi^{\circ p}$. In particular, $\psi$ is a parabolic automorphism with the same Denjoy\,--\,Wolff point as~$\varphi$.

\StepC{2}{$c_{\varphi,\psi}>0$} Essentially the same argument as in Remark~\ref{RM_absorption-continuous} shows that if ${w\in\UH}$, then there exists ${m(w)\in\N}$ such that $R_w:={\{w+m(w)+t:t\ge0\}}\subset\Omega$. It follows that ${S[h_{\varphi,\psi};c]\supset\UH}$ for any~${c>0}$. In particular, $\psi$ is parabolic.

Now fix ${w\in\UH}$ and let ${w_0:=w+m(w)}$, ${z_0:=h_{\varphi,\psi}^{-1}(w_0)}$. Since $R_w$ is a slit in~$\Omega$, the restriction of $h_{\varphi,\psi}^{-1}$ to~$R_w$ has a limit at~$\infty$; see e.g. \cite[Theorem~1 in \S{}II.3]{Goluzin}. Denote this limit by~$\tau_2$. On the one hand, it follows that $\psi^{\circ n}(z_0)={h^{-1}(w_0+nc_{\varphi,\psi})}$ tends to~$\tau_2$ as ${n\to+\infty}$ and hence, $\tau_2$ is the Denjoy\,--\,Wolff point of~$\psi$.
On the other hand,
 ${h^{-1}(w_0+n)}={\varphi^{\circ n}(z_0)\to\tau}$ as ${n\to+\infty}$, and hence $\tau_2=\tau$, as desired.

\StepC{3}{$c_{\varphi,\psi}\in\Complex\setminus\Real$} Fix some ${w_0\in\UH}$ with ${\Im w_0>\max\{-\Im c_{\varphi,\psi},\,0\}}$. Then the compact set $\Pi:=\{w_0+t+s c_{\varphi,\psi}:t,s\in[0,1]\}$ lies in~$\UH$. Since ${\UH\subset S[h_{\varphi,\psi}]}$, there exists ${n_0\in\N}$ such that $\Pi+n_0\subset\Omega$. Because of~\eqref{EQ_invar-wrt-two-transl}, it further follows that $A:=\{w_0+n_0+t+s c_{\varphi,\psi}:t,s\ge0\}$ lies in~$\Omega$. As a consequence $S[h_{\varphi,\psi}; c_{\varphi,\psi}]$ contains the half-plane $\{w:\Im\alpha(w-w_0-n_0)>0\}$, where ${\alpha:=c_{\varphi,\psi}^{-1}}$ if ${\Im c_{\varphi,\psi}<0}$, and ${\alpha:=-c_{\varphi,\psi}^{-1}}$  otherwise. Hence, $\psi$ is parabolic.

Moreover, if we fix some $w_o\in A$, then $R_1:={\{w_o+t:t\ge0\}}$ and $R_2:={\{w_o+s c_{\varphi,\psi}:s\ge0\}}$ are two slits in~$\Omega$ and hence, the limits ${\tau_j:=\lim_{R_j\ni w\to\infty}h_{\varphi,\psi}^{-1}(w)}$, ${j=1,2}$, do exist; see e.g. \cite[Theorem~1 in \S{}II.3]{Goluzin}. On the one hand, by the same argument as in the previous case, $\tau_1$ and $\tau_2$ are the Denjoy\,--\,Wolff points of $\varphi$ and~$\psi$, respectively. On the hand, the slits~$R_1$ and $R_2$ define the same accessible boundary point of~$\Omega$ and hence, ${\tau_2=\tau_1}$; see again \cite[Theorem~1 in \S{}II.3]{Goluzin}.

\smallskip
The proof is now complete.
\end{proof}

Next we are going to show that in spite of the difference between parabolic self-maps of zero hyperbolic step and those of positive hyperbolic step, the definition of the homomorphism ${\Smap_\varphi:[\Zen(\varphi),\circ]\to[\C,+]}$ given for the former case in Sect.\,\ref{S_para-zero} can be extended to the latter case. This fact follows from representation~\eqref{EQ_represent-P-PHS}, and as we will see in the proof of Theorem~\ref{Thm:PHS-S},
\begin{equation}\label{EQ_Smap_F(0)}
 \Smap_\varphi(\psi)=F(0).
\end{equation}

\begin{remark}\label{RM_left-inverse}
 Formula~\eqref{EQ_Smap_F(0)} implies that $\Smap_\varphi$ is a \textit{left} inverse of $\Tmap_\varphi$.
\end{remark}

\begin{theorem} \label{Thm:PHS-S}
Let $\varphi\in\U(\UD)$ be parabolic of positive hyperbolic step with Denjoy\,--\,Wolff point ${\tau\in\partial\UD}$ and with canonical model ${(\UH,h,z\mapsto z+1)}$. Denote $\Omega:=h(\D)$. Then, for every ${\psi\in\Zen(\varphi)}$, there exists
\begin{equation}\label{EQ_Slim-for_PHS}	\Smap_{\varphi}(\psi):=\angle\lim_{z\to\tau}\frac{\psi(z)-z}{\varphi(z)-z}=\angle\lim_{z\to\tau}h^{\prime}(z)(\psi(z)-z)\in  \UH\cup\Real.
\end{equation}
	Moreover, the following statements hold.
	\begin{Ourlist}
		\item\label{IT_PHS-Siff-id} $\Smap_{\varphi}(\psi)=0$ if and only if $\psi=\id_\UD$.\smallskip
		\item\label{IT_PHS-homom} $\Smap_{\varphi}(\psi_1\circ\psi_2)=\Smap_{\varphi}(\psi_1)+\Smap_{\varphi}(\psi_2)$, for every $\psi_1,\psi_2\in\Zen(\varphi)$.\smallskip
		\item\label{IT_PHS-cont} $\Smap_{\varphi}$ is a continuous map from $\Zen(\varphi)$ into $\UH\cup\Real$;\smallskip
\item\label{IT_PHS-closed} The image of the map~$\Smap_\varphi$, $\mathcal A^*_\varphi:=\Smap_\varphi\big(\Zen(\varphi)\big)$ is a closed set in~$\C$.\smallskip
        \item\label{IT_PHS-real} $\mathcal A_\varphi^\R:=\mathcal A_\varphi^*\cap\Real=\mathcal A_\varphi\cap\Real$ and moreover, each $b\in\mathcal A_\varphi^\R$ has exactly one preimage~$\psi_b$ under~$\Smap_\varphi$ given by ${\psi_b=\Tmap_\varphi(b)}$.
	\end{Ourlist}
\end{theorem}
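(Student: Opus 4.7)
My plan is to build the theorem on two previously established results: the simultaneous linearization from Theorem~\ref{TH_simultaneous} (i.e., Proposition~\ref{PR_simultaneous-para-positive}) and the general angular-limit identity of Proposition~\ref{positivo2}. Given $\psi\in\Zen(\varphi)$, Theorem~\ref{TH_simultaneous} produces a univalent $h_*:=h_{\varphi,\psi}$ and a constant $c:=c_{\varphi,\psi}$ with $h_*\circ\varphi=h_*+1$ and $h_*\circ\psi=h_*+c$. Applying Proposition~\ref{positivo2} with this $h_*$ twice---once to $\varphi$ and once to $\psi$---yields $h_*'(z)(\varphi(z)-z)\to 1$ and $h_*'(z)(\psi(z)-z)\to c$ non-tangentially. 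Dividing gives $\angle\lim_{z\to\tau}(\psi(z)-z)/(\varphi(z)-z)=c$, the first equality in~\eqref{EQ_Slim-for_PHS}. For the second, apply Proposition~\ref{positivo2} to $\varphi$ with the Koenigs function $h$, getting $h'(z)(\varphi(z)-z)\to 1$, and factor $h'(z)(\psi(z)-z)=h'(z)(\varphi(z)-z)\cdot(\psi(z)-z)/(\varphi(z)-z)\to 1\cdot c=c$. To conclude $c\in\UH\cup\Real$, I identify $c=F(0)$, where $F\in\Hol(\UD,\UH\cup\Real)$ is the function in the representation~\eqref{EQ_represent-P-PHS} given by Proposition~\ref{positivo1}.

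Items~\ref{IT_PHS-Siff-id}--\ref{IT_PHS-cont} then follow by short complex-analytic arguments on $\psi\mapsto F$. For~\ref{IT_PHS-Siff-id}: if $F(0)=0$, then the harmonic function $\Im F\ge 0$ attains its minimum at the interior point~$0$, so the minimum principle forces $\Im F\equiv 0$; a real-valued holomorphic function on $\UD$ is constant, hence $F\equiv 0$ and $\psi=\id_\UD$. For~\ref{IT_PHS-homom}: writing $\psi_j=h^{-1}(h+F_j(e^{2\pi i h}))$, a direct computation gives $h(\psi_1\circ\psi_2(z))-h(z)=F_2(e^{2\pi i h(z)})+F_1(e^{2\pi i h(\psi_2(z))})$, and the $F$-function of $\psi_1\circ\psi_2$ takes the value $F_1(0)+F_2(0)$ at the origin. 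For~\ref{IT_PHS-cont}: locally uniform convergence $\psi_n\to\psi$ gives pointwise convergence $F_n(e^{2\pi i h(z)})=h(\psi_n(z))-h(z)\to F(e^{2\pi i h(z)})$ on the open set $\{e^{2\pi i h(z)}:z\in\UD\}$; since $\{F_n\}$ is a normal family into $\UH\cup\Real$ (via a Cayley transform to $\overline{\UD}$), the identity theorem upgrades this to locally uniform convergence $F_n\to F$ on all of~$\UD$, in particular $F_n(0)\to F(0)$.

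The principal obstacle is~\ref{IT_PHS-closed}. Given $c_n:=\Smap_\varphi(\psi_n)\to c\in\C$, extract a locally uniformly convergent subsequence $\psi_n\to\psi$ (normality of self-maps of $\UD$). By Theorem~\ref{Thm:general2}~\ref{IT_2_not_auto}, either $\psi\in\Zen(\varphi)$ or $\psi\equiv\tau$. The constant case must be ruled out: it would force $h(\psi_n(z_0))\to\infty$ for any fixed $z_0\in\UD$, hence $F_n(\zeta_0)\to\infty$ at $\zeta_0:=e^{2\pi i h(z_0)}\in\UD^*$; the Cayley-transformed family (viewed as maps into $\overline{\UD}$) would then have a subsequential limit $\tilde G$ with $\tilde G(\zeta_0)\in\partial\UD$ but $\tilde G(0)\in\UD$, contradicting the maximum principle. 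Hence $\psi\in\Zen(\varphi)$, and continuity~\ref{IT_PHS-cont} gives $c=\Smap_\varphi(\psi)\in\mathcal A^*_\varphi$.

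Finally,~\ref{IT_PHS-real} follows the pattern of~\ref{IT_PHS-Siff-id}. If $b\in\mathcal A_\varphi\cap\Real$, then $\Tmap_\varphi(b)\in\Zen(\varphi)$ has the constant representation $F\equiv b$, so $\Smap_\varphi(\Tmap_\varphi(b))=b$, giving $b\in\mathcal A^*_\varphi\cap\Real$. Conversely, any preimage of $b\in\mathcal A^*_\varphi\cap\Real$ arises from some $F$ with $F(0)=b\in\Real$; the minimum-principle argument of~\ref{IT_PHS-Siff-id} applied to $F-b$ forces $F\equiv b$, so $\psi=\Tmap_\varphi(b)$. This simultaneously imposes $\Omega+b\subset\Omega$ (i.e., $b\in\mathcal A_\varphi$) and establishes the uniqueness of the preimage.
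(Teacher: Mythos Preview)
Your treatment of the opening limit and of \ref{IT_PHS-Siff-id}, \ref{IT_PHS-homom}, \ref{IT_PHS-cont}, \ref{IT_PHS-real} is correct and essentially matches the paper's. The identification $c=F(0)$, however, is asserted without proof: from your computations and one more application of Proposition~\ref{positivo2} (with~$h$ in place of~$h_*$) you obtain $c=\angle\lim_{z\to\tau}\big(h(\psi(z))-h(z)\big)=\angle\lim_{z\to\tau}F\big(e^{2\pi i h(z)}\big)$, and to conclude this equals~$F(0)$ you need $e^{2\pi i h(z)}\to 0$ non-tangentially, i.e.\ $\angle\lim_{z\to\tau}\Im h(z)=+\infty$. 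The paper invokes this explicitly (\cite[Theorem~3]{Pom79}); once you add it, the identification is complete.

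There is a genuine gap in~\ref{IT_PHS-closed}. The step ``$\psi\equiv\tau$ forces $h(\psi_n(z_0))\to\infty$'' is not justified, and the mechanism is in fact wrong. From $\psi_n(z_0)\to\tau$ you only know that $h(\psi_n(z_0))$ leaves every compact subset of~$\Omega$; it may well accumulate at a \emph{finite} point of~$\partial\Omega$, since the Denjoy--Wolff point can correspond to several prime ends of~$\Omega$. Julia's lemma does not rescue this either: it only traps $\psi_n(z_0)$ in a horodisc at~$\tau$, and horodiscs permit tangential approach, so~\cite[Theorem~3]{Pom79} is not applicable along the sequence. The paper sidesteps all of this by never passing to a limit of~$(\psi_n)$ or invoking Theorem~\ref{Thm:general2}: it extracts a locally uniform limit $F$ of~$(F_n)$ directly (normal family into~$\overline{\UH}$; the limit is finite because $F_n(0)=c_n$ is bounded), observes that $g(w):=w+F(e^{2\pi i w})$ is non-constant since $g'(iy)\to1$ as $y\to+\infty$, hence univalent by Hurwitz, and then---again by Hurwitz applied to $g_n-w_0$ for each $w_0\notin\Omega$---gets $g(\Omega)\subset\Omega$, so that $\psi:=h^{-1}\circ g\circ h\in\Zen(\varphi)$ with $\Smap_\varphi(\psi)=F(0)=c$. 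Your route is repairable by the same device: once $F_{n_k}\to F$ with $g$ non-constant, Hurwitz gives $g(h(z_0))\in\Omega$, whence $\psi_{n_k}(z_0)\to h^{-1}\big(g(h(z_0))\big)\in\UD$, contradicting $\psi\equiv\tau$; but at that point you have essentially reproduced the paper's argument, and the detour through Theorem~\ref{Thm:general2} becomes redundant.
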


\begin{proof}
Fix $\psi\in\Zen(\varphi)\setminus\{\id_\UD\}$. By Proposition~\ref{positivo1}, we find that $\psi$ is parabolic with Denjoy\,\,--Wolff point at~$\tau$. Recall that $h$ is the Koenigs map of~$\varphi$. Hence  ${h(\varphi(z))-h(z)=1}$ and by representation~\eqref{EQ_represent-P-PHS}, ${h(\psi(z))-h(z)}=F(e^{2\pi i h(z)})$ for all ${z\in\UD}$. By \cite[Theorem~3]{Pom79}, we have  $\angle\lim_{z\to\tau}\Im h(z)=+\infty$. Therefore,
  $$
\angle\lim_{z\to\tau}F(e^{2\pi i h(z)})=F(0).
  $$
Thus, appealing twice to Proposition~\ref{positivo2}, we deduce that
	\begin{equation*}
\anglim_{z\to\tau}\frac{\psi(z)-z}{\varphi(z)-z}~=~
   \anglim_{z\to\tau}\frac{1}{h^\prime(z)(\varphi(z)-z)}\,\cdot\,
   \anglim_{z\to\tau}\dfrac{h^\prime(z)(\psi(z)-z)}{F(e^{2\pi i h(z)})}\,\cdot\,
   \anglim_{z\to\tau}F(e^{2\pi i h(z)})~\,=\,~F(0).
	\end{equation*}
By a similar calculation, also $\anglim_{z\to\tau}h^\prime(z)(\psi(z)-z)\,=\,F(0)\,\in\,\UH\cup\Real$.

\StepP{\ref{IT_PHS-Siff-id}} If $\psi=\id_\UD$, then obviously the angular limit~\eqref{EQ_Slim-for_PHS} exists and equals zero. If $\psi\neq\id_\UD$, then $F$ in representation~\eqref{EQ_represent-P-PHS} cannot vanish identically in~$\UD$, and therefore, ${\Smap_{\varphi}(\psi)=F(0)\neq0}$.

\StepP{\ref{IT_PHS-homom}}
Let $\psi=\psi_1\circ\psi_2$, where $\psi_j\in\Zen(\varphi)$, $j=1,2$. Write representation~\eqref{EQ_represent-P-PHS} for~$\psi_j$:
$$
\psi_j~=~h^{-1}\,\circ\, \big(w \mapsto  w+F_j(e^{2\pi i w})\big)\,\circ\, h,\quad j=1,2.
$$
Then it is easy to see that $\big(h\circ\psi\circ h^{-1}\big)(w)=w+F(e^{2\pi i w})$ for all ${w\in h(\UD)}$, where $F$ is given by $$F(\zeta):=F_2(\zeta)+F_1\big(\zeta e^{2\pi F_2(\zeta)}\big)\quad\text{for all~$\zeta\in\UD$}.$$
It follows that $$\Smap_\varphi(\psi)=F(0)=F_2(0)+F_1(0)=\Smap_\varphi(\psi_2)+\Smap_\varphi(\psi_1).$$

\StepP{\ref{IT_PHS-cont} and \ref{IT_PHS-closed}} To see that the set $\mathcal A^*_\varphi:=\Smap_\varphi\big(\Zen(\varphi)\big)$ is closed, consider a sequence ${(\psi_n)\subset \Zen(\varphi)}$ such that
$$
c_n:=\Smap_\varphi(\psi_n)~\to~ c\quad\text{as~$~n\to+\infty$}
$$
for some $c\in\C$. We have to show that $c\in\mathcal A^*_\varphi$.

Write representation~\eqref{EQ_represent-P-PHS} for~$\psi_n$:
$$
 \psi_n~=~h^{-1}\,\circ\, \big(w \mapsto  w+F_n(e^{2\pi i w})\big)\,\circ\, h,\quad n\in\Natural.
$$
Recall that $F_n(\UD)\subset\UH\cup\Real$ for all $n\in\Natural$. Therefore, $F_n$'s form a normal family in~$\UD$.
Moreover, $F_n(0)=c_n$ for each~$n\in\Natural$. It follows that passing to a subsequence, we may suppose that $(F_n)$ converges locally uniformly in~$\UD$ to some ${F\in\Hol(\UD,\UH\cup\Real)}$.

Since for each $n\in\Natural$, $g_n(w):=w+F_n(e^{2\pi i w})$ is a univalent in~$\UH$ and since $(g_n)$ converges locally uniformly in~$\UH$ to $g(w):=w+F(e^{2\pi i w})$, it follows that $g$ is either univalent in~$\UH$ or constant. The latter alternative is, in fact, impossible because
$$
  g'(iy)=1+2\pi e^{-2\pi y} F'(e^{-2\pi y})~\to~1\quad\text{as~$~y\to+\infty$}.
$$

Furthermore, $g_n(\Omega)\subset\Omega:=h(\UD)$ for all $n\in\Natural$. Hence, $g(\Omega)\subset\overline{\Omega}$. Since $g$ is not constant, in fact we have ${g(\Omega)\subset\Omega}$. It follow that $\psi:={h^{-1}\circ g\circ h}$ is a univalent self-map of~$\UD$ commuting with~$\varphi$. Thus,
$$
 c=F(0)=\Smap_\varphi(\psi)\in\mathcal A_\varphi^*,
$$
as desired.

Now to check the continuity of $\Smap_{\varphi}$, we again consider a sequence ${(\psi_n)\subset \Zen(\varphi)}$. Suppose that $(\psi_n)$ converges locally uniformly in~$\UD$ to some ${\psi\in\Zen(\varphi)}$. Arguing as above, we see that $\psi$ admits representation~\eqref{EQ_represent-P-PHS} with $F:=\lim_{n\to+\infty} F_n$. It follows that
$$
\Smap_\varphi(\psi_n)=F_n(0)~\to~F(0)=\Smap_\varphi(\psi)\quad\text{as~$~n\to+\infty$},
$$
which means that $\Smap_{\varphi}$ is continuous.

\StepP{\ref{IT_PHS-real}} All we have to prove is that if ${\psi\in\Zen(\varphi)}$ and $b:={\Smap_\varphi(\psi)\in\Real}$, then ${\psi=\Tmap_\varphi(b)}$. To this end it is sufficient to apply Proposition~\ref{positivo1} and recall that, as we have proved above, $b=F(0)$. Since ${F(\UD)\subset\UH\cup\Real}$, it follows that $F\equiv b$ and hence the desired conclusion follows immediately from representation~\eqref{EQ_represent-P-PHS}.
\end{proof}

\begin{remark} \label{Rem:cinPHP} The ideas of the above proof allows us to identify the value $c_{\varphi,\psi}$ in Theorem~\ref{TH_simultaneous} and in system~\eqref{EQ_two-Abel-in-remark},  when $\varphi$ is parabolic of positive hyperbolic step. Indeed, take a univalent function $h_{\varphi,\psi}\in\Hol(\UD,\C)$ and a constant~$c_{\varphi,\psi}\in\C$ such that
\begin{equation*}
h_{\varphi,\psi}\circ\varphi=h_{\varphi,\psi}+1\quad\text{and}\quad
h_{\varphi,\psi}\circ\psi=h_{\varphi,\psi}+c_{\varphi,\psi}.
\end{equation*}
Then, by Proposition~\refeq{positivo2}, we deduce that
	$$
1=\angle \lim_{z\to\tau}\dfrac{h_{\varphi,\psi}(\psi(z))-h_{\varphi,\psi}(z)}{h_{\varphi,\psi}^\prime(z)(\psi(z)-z)}= \frac{ c_{\varphi,\psi}}{\angle \lim_{z\to\tau}h_{\varphi,\psi}^\prime(z)(\psi(z)-z)}
$$
and
$$
1=\angle \lim_{z\to\tau}\dfrac{h_{\varphi,\psi}(\varphi(z))-h_{\varphi,\psi}(z)}{h_{\varphi,\psi}^\prime(z)(\varphi(z)-z)}
= \frac{ 1}{\angle \lim_{z\to\tau}h_{\varphi,\psi}^\prime(z)(\varphi(z)-z)}.
$$
Therefore, similarly to the case of zero hyperbolic step,
\begin{equation}\label{EQ_c-phi-psi_PHS}
c_{\varphi,\psi}=\angle\lim_{z\to\tau}\frac{\psi(z)-z}{\varphi(z)-z}=\Smap_\varphi(\psi).
\end{equation}
\end{remark}

\medskip If $\varphi\in C_A^2(\tau)$, there exists yet another way to express the quantity~\eqref{EQ_c-phi-psi_PHS}, which is valid for any univalent parabolic self-map regardless of whether it of \textit{zero or positive} hyperbolic step.
\begin{corollary}\label{CR_parabolic-second-der}  Let $\varphi\in\U(\UD)$ be a parabolic self-map with Denjoy\,--\,Wolff point ${\tau\in\partial\UD}$ and assume that $\varphi\in C_A^2(\tau)$. Then ${\Zen(\varphi)\subset C_A^2(\tau)}$, and for any ${\psi\in\Zen(\varphi)}$,
\begin{equation}\label{EQ_when-the second-derivative exists}
	\psi''_A(\tau)=\varphi''_A(\tau)\Smap_{\varphi}(\psi).
	\end{equation}
In particular, for $\psi\in\Zen(\varphi)\setminus\{\id_\UD\}$, we have that ${\psi''_A(\tau)=0}$ if and only if ${\varphi''_A(\tau)=0}$.	
	\end{corollary}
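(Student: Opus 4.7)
The plan is to derive the $C_A^2$ expansion of $\psi$ at $\tau$ directly from that of $\varphi$ via the asymptotic characterization of $\Smap_\varphi(\psi)$. The starting point is that, by Theorems~\ref{Thm:0HS} and~\ref{Thm:PHS-S} (equation~\eqref{EQ_Slim-for_PHS}), in both the zero- and positive-hyperbolic-step cases
$$
\Smap_\varphi(\psi)\,=\,\angle\lim_{z\to\tau}\frac{\psi(z)-z}{\varphi(z)-z}\qquad\text{for every }\psi\in\Zen(\varphi).
$$

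First I would dispose of the case $\psi=\id_\UD$, where all three assertions are trivial, and thereafter assume $\psi\neq\id_\UD$. By Proposition~\ref{positivo1} in the positive hyperbolic step case, and by Cowen's theorem cited in Section~\ref{S_preliminaries} combined with Behan's theorem in the zero hyperbolic step case, $\psi$ is parabolic with the same Denjoy--Wolff point $\tau$; in particular $\psi(\tau)=\tau$ and $\psi'_A(\tau)=1$, so $\psi\in C_A^1(\tau)$. Next, since $\varphi\in C_A^2(\tau)$ with $\varphi(\tau)=\tau$ and $\varphi'_A(\tau)=1$,
$$
\varphi(z)-z\,=\,\tfrac{1}{2}\varphi''_A(\tau)(z-\tau)^2+\gamma(z),\qquad \angle\lim_{z\to\tau}\frac{\gamma(z)}{(z-\tau)^2}=0.
$$
Writing $\psi(z)-z=\Smap_\varphi(\psi)(\varphi(z)-z)+\epsilon(z)$ with $\angle\lim_{z\to\tau}\epsilon(z)/(\varphi(z)-z)=0$, and multiplying by the angularly bounded factor $(\varphi(z)-z)/(z-\tau)^2$, which has angular limit $\varphi''_A(\tau)/2$, we find that $\epsilon(z)/(z-\tau)^2$ tends to $0$ angularly. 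Consequently,
$$
\psi(z)-z\,=\,\tfrac{1}{2}\varphi''_A(\tau)\Smap_\varphi(\psi)\,(z-\tau)^2+o\big((z-\tau)^2\big)
$$
angularly at $\tau$; combined with $\psi(\tau)=\tau$ and $\psi'_A(\tau)=1$, this yields $\psi\in C_A^2(\tau)$ and $\psi''_A(\tau)=\varphi''_A(\tau)\Smap_\varphi(\psi)$.

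The concluding assertion is then immediate: for $\psi\in\Zen(\varphi)\setminus\{\id_\UD\}$, Theorem~\ref{Thm:PHS-S}\,\ref{IT_PHS-Siff-id} in the positive hyperbolic step case, or the injectivity of $\Smap_\varphi$ furnished by Theorem~\ref{Thm:0HS} in the zero hyperbolic step case, guarantees $\Smap_\varphi(\psi)\neq 0$, so~\eqref{EQ_when-the second-derivative exists} gives $\psi''_A(\tau)=0$ if and only if $\varphi''_A(\tau)=0$. The only delicate point, though still elementary, is the passage from $\epsilon(z)=o(\varphi(z)-z)$ to $\epsilon(z)=o((z-\tau)^2)$: the factorization above handles uniformly both subcases $\varphi''_A(\tau)\neq 0$ and $\varphi''_A(\tau)=0$ (in the latter the angularly bounded factor is actually angularly null, which only strengthens the estimate).
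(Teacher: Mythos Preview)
Your proof is correct and follows essentially the same approach as the paper: both arguments combine the existence of the angular limit $\Smap_\varphi(\psi)=\anglim_{z\to\tau}\frac{\psi(z)-z}{\varphi(z)-z}$ (from Theorems~\ref{Thm:0HS} and~\ref{Thm:PHS-S}) with the $C_A^2$ expansion of $\varphi$ to obtain the $C_A^2$ expansion of $\psi$ by multiplying quotients. Your write-up is somewhat more explicit in introducing the error term $\epsilon(z)$ and in first checking $\psi\in C_A^1(\tau)$, but the latter step is in fact unnecessary, since the very existence of $\anglim_{z\to\tau}2(\psi(z)-z)/(z-\tau)^2$ already encodes $\psi(\tau)=\tau$ and $\psi'_A(\tau)=1$.
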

\begin{proof}
Let $\psi\in\Zen(\varphi)$. By Theorems~\ref{Thm:0HS} and \ref{Thm:PHS-S}, the limit
$$
\Smap_\varphi(\psi)=\anglim_{z\to\tau}\frac{\psi(z)-z}{\varphi(z)-z}
$$
exists finitely.
Furthermore, the assumption $\varphi\in C^2_A(\tau)$ means that the limit
$$
\varphi_A''(\tau)=\anglim_{z\to\tau}\frac{2\big(\varphi(z)-z\big)}{(z-\tau)^2}
$$
exists finitely. It follows immediately that $\psi_A''(\tau)= \anglim_{z\to\tau}2\big(\psi(z)-z\big)/(z-\tau)^2$ does exists finitely as well, i.e. $\psi\in C^2_A(\tau)$, and that the formula~\eqref{EQ_when-the second-derivative exists} holds.

If additionally $\psi\neq\id_\UD$, then by Theorems~\ref{Thm:0HS} and \ref{Thm:PHS-S}, ${\Smap_\varphi(\psi)\neq0}$ and in this case, clearly, ${\psi''_A(\tau)=0}$ if and only if ${\varphi''_A(\tau)=0}$.
\end{proof}

\begin{remark}\label{RM_rigidity}
Recall that in case of a parabolic self-map~$\varphi$ of zero hyperbolic step, by Theorem~\ref{Thm:0HS}, $\Smap_\varphi=\Tmap_\varphi^{-1}$. Somewhat similarly, in case of positive hyperbolic step, by Theorem~\ref{Thm:PHS-S}\,\ref{IT_PHS-real},  the restriction of $\Smap_\varphi$ to $\Smap_\varphi^{-1}(\Real)$ is injective and its inverse is given by $\Tmap_\varphi$ restricted to $\mathcal A_\varphi\cap\Real$.
Bearing this in mind, we see that under the hypothesis of Corollary~\ref{CR_parabolic-second-der} the following statement holds: if $\psi\in\Zen(\varphi)$ and ${\psi_A''(\tau)=k\varphi_A''(\tau)\neq0}$ for some ${k\in\Z}$, then ${\psi=\varphi^{\circ k}}$.
  This gives an improvement of \cite[Theorem~2.4\,(6)]{BTV} and \cite[Theorem~1.2\,(1)-(2)]{Tauraso}, under the additional assumption that the commuting parabolic self-maps under consideration are univalent.
  \end{remark}

\begin{remark}
Assume that $\varphi\in\U(\UD)$ is a parabolic self-map with Denjoy\,--\,Wolff point ${\tau\in\partial\UD}$ and that $\varphi\in C_A^3(\tau)$ with $\varphi_A''(\tau)=0$.
By \cite[Theorem~7.1]{CDP}, $-\tau^2\varphi_A'''(\tau)>0$.
Moreover, by Corollary~\ref{CR_parabolic-second-der}, $\psi_A''(\tau)=0$ for all $\psi \in \Zen(\varphi)$. This, together with \eqref{EQ_c-phi-psi_PHS} and the very definition of the angular derivative of order $3$, implies that $\psi \in C_A^3(\tau)$ and
$$
\psi'''_A(\tau)= \Smap_\varphi(\psi)\, \varphi'''_A(\tau).
$$
In particular, it follows that, in this case, $$\mathcal A_\varphi=\mathcal A_\varphi^*\subset[0,+\infty).$$
Moreover, in the same manner as in Remark~\ref{RM_rigidity} we deduce that if, in addition to the above assumptions, ${\psi_A'''(\tau)=k\varphi_A'''(\tau)}$ for some integer~$k$, then ${\psi=\varphi^{\circ k}}$. This improves the rigidity results \cite[Theorem~2.4\,(6)]{BTV} and \cite[Theorem~1.2\,(3)]{Tauraso} for the case of univalent self-maps.
\end{remark}

Now we are ready to prove a dichotomy relating embeddability to the local structure of the centralizer. It is rather similar to Theorem~\ref{PR_emb-dichotomy-P0HS}; however, in contrast to the case of zero hyperbolic step, instead of $\mathcal A_\varphi$ we have to consider $\mathcal A^*_\varphi:=\Smap_\varphi\big(\Zen(\varphi)\big)$, which is not necessarily an isomorphic image of the centralizer. It is worth mentioning that \textit{a posteriori}, the result we state and prove below holds even with $\mathcal A_\varphi$ in place of~$\mathcal A^*_\varphi$, because ${\mathcal A_\varphi\subset \mathcal A^*_\varphi}$ and because  ${[0,+\infty)}\subset\mathcal A_\varphi$ if $\varphi$ is embeddable.

\begin{theorem}\label{TH_emb-dichotomy-P-PHS} Let $\varphi\in\U(\UD)$ be parabolic of positive hyperbolic step. Exactly one of the following alternatives holds:
	\begin{itemize}
		\item[\rm (i)] either there exist $\rho>0$ and $\delta>0$ such that
		$$\mathcal A^*_{\varphi }\,\cap\,\{c\in\C: 0<|c|<\rho, |\mathrm{Arg}(c)|<\delta\}~=~\emptyset,$$
		\item[\rm (ii)] or there exists a unique continuous one-parameter semigroup $(\phi_t)\subset\U(\UD)$ with $\phi_1=\varphi$.
	\end{itemize}
\end{theorem}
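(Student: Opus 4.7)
The plan is to mimic closely the argument used for Theorem~\ref{PR_emb-dichotomy-P0HS} in the zero hyperbolic step case, with $\mathcal A_{\varphi}$ replaced by $\mathcal A^{*}_{\varphi}$, exploiting the structural properties of $\mathcal A^{*}_{\varphi}$ established in Theorem~\ref{Thm:PHS-S}. The key structural facts are that $\mathcal A^{*}_{\varphi}$ is a closed additive subsemigroup of $[\C,+]$ (by parts~\ref{IT_PHS-homom} and~\ref{IT_PHS-closed} of that theorem) containing $1=\Smap_{\varphi}(\varphi)$, and that its intersection with~$\Real$ equals $\mathcal A_{\varphi}\cap\Real$ with each real value lifted uniquely via~$\Tmap_{\varphi}$ (part~\ref{IT_PHS-real}).

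First I would verify the easy implication that (ii) excludes~(i). Indeed, if $(\phi_t)_{t\ge 0}$ is a continuous one-parameter semigroup with ${\phi_{1}=\varphi}$, then each $\phi_t$ commutes with~$\varphi$, so ${\phi_t\in\Zen(\varphi)}$, and since ${h\circ\phi_t=h+t}$ one has ${\phi_t=\Tmap_{\varphi}(t)}$; by Remark~\ref{RM_left-inverse}, ${\Smap_{\varphi}(\phi_t)=t}$ for every ${t\ge 0}$. Hence ${[0,+\infty)\subset\mathcal A^{*}_{\varphi}}$, which rules out~(i).

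For the converse, assume that~(i) fails. Then for every ${n\in\Natural}$ there exists ${c_n\in\mathcal A^{*}_{\varphi}}$ with ${0<|c_n|<1/n}$ and ${|\Arg c_n|<1/n}$. Write ${c_n=r_n e^{i\theta_n}}$ with ${r_n>0}$ and $\theta_n\to 0$. Fix ${t>0}$ and set ${m_n:=\lfloor t/r_n\rfloor\in\Natural_0}$. Since $\mathcal A^{*}_{\varphi}$ is closed under addition, ${m_n c_n\in\mathcal A^{*}_{\varphi}}$ for each~$n$. From ${m_n r_n\le t<m_n r_n+r_n}$ we get ${m_n r_n\to t}$, and together with $e^{i\theta_n}\to 1$ this yields ${m_n c_n\to t}$. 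Because $\mathcal A^{*}_{\varphi}$ is topologically closed in~$\C$, it follows that ${t\in\mathcal A^{*}_{\varphi}}$. Since ${t\in\Real}$, by Theorem~\ref{Thm:PHS-S}\,\ref{IT_PHS-real} we conclude that ${t\in\mathcal A_{\varphi}}$, i.e. ${\Omega+t\subset\Omega}$, where ${\Omega:=h(\UD)}$ is the Koenigs domain of~$\varphi$.

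As $t>0$ is arbitrary, $\Omega$ is starlike at infinity, so by the embeddability criterion recalled in Remark~\ref{RM_emb-ty_via_K-dom} the formula ${\phi_t:=h^{-1}\circ(z\mapsto z+t)\circ h}$ defines a continuous one-parameter semigroup $(\phi_t)\subset\U(\UD)$ with ${\phi_{1}=\varphi}$. Uniqueness is given by Corollary~\ref{CR_uniquesemigroup}. The only genuinely new ingredient compared with the zero hyperbolic step case is the passage from ${t\in\mathcal A^{*}_{\varphi}}$ to ${t\in\mathcal A_{\varphi}}$, but this is supplied directly by Theorem~\ref{Thm:PHS-S}\,\ref{IT_PHS-real}; thus no additional obstacle arises.
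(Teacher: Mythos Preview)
Your proof is correct and follows essentially the same approach as the paper's: both use the closedness and additivity of $\mathcal A^{*}_{\varphi}$ from Theorem~\ref{Thm:PHS-S} to run the argument of Theorem~\ref{PR_emb-dichotomy-P0HS} and then invoke part~\ref{IT_PHS-real} to pass from $\mathcal A^{*}_{\varphi}$ to~$\mathcal A_{\varphi}$. The only difference is that you spell out the integer-part argument explicitly rather than citing Theorem~\ref{PR_emb-dichotomy-P0HS}.
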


\begin{proof}
If (ii) holds, then $[0,+\infty)\subset \mathcal{A}_{\varphi}\subset\mathcal{A}^*_{\varphi}$ and, clearly, (i) does not hold.

Conversely, assume that (i) does not hold. By Theorem~\ref{Thm:PHS-S}, $\mathcal A^*_\varphi$ is closed w.r.t. taking sums and also topologically closed. Hence, arguing as in the proof of Theorem~\ref{PR_emb-dichotomy-P0HS}, we obtain the inclusion ${[0,+\infty)}\subset \mathcal A_\varphi^*$. Taking into account that by Theorem~\ref{Thm:PHS-S}\,\ref{IT_PHS-real}, ${\mathcal A_\varphi^*\cap\Real=\mathcal A_\varphi\cap\Real}$, this means that in fact, $\mathcal A_\varphi$ contains ${[0,+\infty)}$ and again as in the proof of Theorem~\ref{PR_emb-dichotomy-P0HS}, we conclude that (ii) must hold.
\end{proof}

We complete this section with two propositions, which are in particular useful for constructing non-trivial examples.
\begin{proposition}\label{PR_semigroup-lifitng}
The following two statements hold.
\begin{Ourlist}
  \item\label{IT_semigroup-from-disk} Let $(\phi_t)$ be a continuous one-parameter semigroup in~$\UD$ with Denjoy\,--\,Wolff point at~$0$. Then there exists a unique continuous one-parameter semigroup $(\Psi_t)$ in~$\UH$ such that
      \begin{align}\label{EQ_semigroup-periodicity}
                      \Psi_t(w+1)&=\Psi_t(w)+1\\
                   \label{EQ_semigroup-lifitng}
      \text{and}\quad \exp\big(2\pi i\,\Psi_t(w)\big)\,&=\,\phi_t(e^{2\pi i w})
      \end{align}
      for all~$t\ge0$ and all~$w\in\UH$.
      \smallskip
  \item\label{IT_semigroup-to-disk} Conversely, let  $(\Psi_t)$ be a continuous one-parameter semigroup in~$\UH$ satisfying~\eqref{EQ_semigroup-periodicity}. Then there exists a unique continuous one-parameter semigroup $(\phi_t)$ in~$\UD$ such that \eqref{EQ_semigroup-lifitng} holds. The semigroup~$(\phi_t)$ is elliptic with Denjoy\,--\,Wolff point at~$0$.
\end{Ourlist}\smallskip
Moreover, if $(\phi_t)$ and $(\Psi_t)$ are continuous one-parameter semigroups related as above, then the following three statements hold.\smallskip
\begin{Ourlist}\addtocounter{enumi}{2}
  \item\label{IT_lifting-generators} There exists a holomorphic function $p:\UD\to\C\,$  with~${\Re p\ge0}$ such that
      \begin{equation}\label{EQ_lifting-generators}
        G_\phi(z)=-zp(z)~\text{~for all~$~z\in\UD$}\quad\text{and}\quad G_\Psi(w)=\frac{i}{2\pi}p(e^{2\pi iw})~\text{~for all~$~w\in\UH$},
      \end{equation}
      where $G_\phi$ and $G_\Psi$ are the infinitesimal generators of $(\phi_t)$ and $(\Psi_t)$, respectively.
      \smallskip
  \item\label{IT_Koenigs-function-periodicity}  The Koenigs function $H$ of~$(\Psi_t)$ satisfy
 \begin{equation}\label{EQ_Koenigs-function-periodicity}
    H(w+1)=H(w)+\frac{2\pi}{ip(0)}\quad\text{for any~$~w\in\UH$}.
 \end{equation}
 \item\label{IT_Psi_t-type} The one-parameter semigroup $(\Psi_t)$ is parabolic with Denjoy\,--\,Wolff point at~$\infty$, and it is of zero hyperbolic step unless $p$ is an imaginary constant.
\end{Ourlist}
\end{proposition}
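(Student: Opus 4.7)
The plan is to lift via the universal covering $E(w):=e^{2\pi iw}:\UH\to\UD\setminus\{0\}$, exploiting Lemma~\ref{Lem:intertwining-exp} to handle the bijection at each fixed~$t$. For part~\ref{IT_semigroup-from-disk}, since the Denjoy\,--\,Wolff point of~$\phi_t$ is $0$, each $\phi_t$ restricts to a univalent self-map of $\UD\setminus\{0\}$, and Lemma~\ref{Lem:intertwining-exp}\,\ref{IT_periodic-univalent-converse} produces, for every $t\ge 0$, a univalent $\Psi_t:\UH\to\UH$ satisfying~\eqref{EQ_semigroup-periodicity} and~\eqref{EQ_semigroup-lifitng}, unique up to addition of an integer. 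I would single out the distinguished semigroup by requiring $\Psi_0=\id_\UH$ and then, for each $t>0$, choosing the unique lift depending continuously on~$t$. The algebraic semigroup law $\Psi_{t+s}=\Psi_t\circ\Psi_s$ follows because both sides are lifts of $\phi_{t+s}\circ E$ that coincide at $t=0$, and continuity at $t=0$ transports from~$(\phi_t)$ via the local biholomorphism~$E$. Part~\ref{IT_semigroup-to-disk} is dual: Lemma~\ref{Lem:intertwining-exp}\,\ref{IT_periodic-univalent} applied to each $\Psi_t$ produces $\phi_t\in\U(\UD)$ with $\phi_t(0)=0$ such that~\eqref{EQ_semigroup-lifitng} holds, and the semigroup and continuity properties of $(\phi_t)$ descend from those of~$(\Psi_t)$.

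For part~\ref{IT_lifting-generators}, I would differentiate~\eqref{EQ_semigroup-lifitng} in~$t$ at $t=0$:
\begin{equation*}
  2\pi i\,G_\Psi(w)\,e^{2\pi iw}=G_\phi(e^{2\pi iw}),
\end{equation*}
i.e.\ $G_\phi(z)=2\pi iz\,G_\Psi(w)$ with $z=e^{2\pi iw}$. The Berkson\,--\,Porta representation of a generator on~$\UD$ with Denjoy\,--\,Wolff point at~$0$ gives $G_\phi(z)=-zp(z)$ with $p\in\Hol(\UD)$ and $\Re p\ge 0$, yielding the stated formula for~$G_\Psi$. For part~\ref{IT_Koenigs-function-periodicity}, the Koenigs function~$H$ of $(\Psi_t)$ satisfies $H'=1/G_\Psi$, which is $1$-periodic in~$w$ by~\eqref{EQ_lifting-generators}; hence $H(w+1)-H(w)$ is constant. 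Letting $\Im w\to+\infty$ with $\Re w$ bounded, $e^{2\pi iw}\to 0$ forces $H'(w)\to 2\pi/(ip(0))$, and integrating along a unit horizontal segment at large height identifies the constant as $2\pi/(ip(0))$.

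For part~\ref{IT_Psi_t-type}, the Denjoy\,--\,Wolff point of $(\Psi_t)$ cannot lie in~$\UH$ (such a fixed point would project via~$E$ to an interior fixed point of~$\phi_t$ in $\UD\setminus\{0\}$, violating uniqueness of the interior Denjoy\,--\,Wolff point~$0$ of~$\phi_t$), nor in~$\R$ (by~\eqref{EQ_semigroup-periodicity}, if $\tau\in\R$ were the Denjoy\,--\,Wolff point then so would be $\tau+1$); hence it is~$\infty$. The semigroup is parabolic because $\Psi_t-\id_\UH$ equals $(2\pi i)^{-1}\log(\phi_t(z)/z)$ with $z=e^{2\pi iw}$, which stays bounded as $\Im w\to+\infty$ (since $\phi_t(z)/z$ is holomorphic and non-vanishing at~$0$); this forces $\Psi_t(w)/w\to 1$ non-tangentially, so the angular derivative at $\infty$ equals~$1$. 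The main obstacle is the hyperbolic step dichotomy. By Theorem~\ref{Thm:model}\,\ref{IT_HM-para-PHS}--\ref{IT_HM-para-0HS}, $(\Psi_t)$ has positive hyperbolic step iff, after some vertical translation of~$H$, $\Omega:=H(\UH)$ lies in~$\UH$ or in~$-\UH$. Setting $c_*:=2\pi/(ip(0))$, if $\Omega\subset\UH$ then iterating~\eqref{EQ_Koenigs-function-periodicity} forward yields $\Im H(w+n)=\Im H(w)+n\,\Im c_*\to-\infty$ whenever $\Im c_*<0$, contradicting $\Im H>0$; the symmetric argument with $w-n\in\UH$ in place of $w+n$ rules out $\Omega\subset-\UH$. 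Since $\Im c_*<0$ is equivalent to $\Re p(0)>0$, zero hyperbolic step follows in this case. Conversely, $\Re p(0)=0$ forces $\Re p\equiv 0$ on~$\UD$ by the minimum principle for non-negative harmonic functions, so $p\equiv i\beta$ for some $\beta\in\R$; then $G_\Psi\equiv -\beta/(2\pi)$ and $\Psi_t(w)=w-\beta t/(2\pi)$ is a real translation, i.e.\ a parabolic automorphism of $\UH$ of positive hyperbolic step.
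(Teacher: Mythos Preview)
Your proof is correct and follows essentially the same route as the paper's: lift via the exponential covering, relate the generators through Berkson--Porta, compute the period of the Koenigs function, and read off the type of $(\Psi_t)$ from its canonical model. The only substantive difference is that for part~\ref{IT_semigroup-from-disk} you construct the lifted semigroup directly from Lemma~\ref{Lem:intertwining-exp}\,\ref{IT_periodic-univalent-converse} together with a continuity-of-lifts argument, whereas the paper appeals to \cite[Proposition~2.1]{pavel}; the remaining variations (evaluating the constant in~\eqref{EQ_Koenigs-function-periodicity} by letting $\Im w\to+\infty$ rather than by a contour integral, excluding finite Denjoy--Wolff points via the projection and periodicity rather than via the generator, and bounding $\Im H(w\pm n)$ instead of exhibiting horizontal lines in the base space) are cosmetic.
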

\begin{proof}
Suppose that $(\phi_t)$ is as in the hypothesis of~\ref{IT_semigroup-from-disk}. The existence of a unique one-parameter semigroup $(\Psi_t)\subset\Hol(\UH)$ satisfying~\eqref{EQ_semigroup-lifitng} is essentially by \cite[Proposition~2.1]{pavel}. The first equality in~\eqref{EQ_lifting-generators} is by the Berkson\,--\,Porta representation formula for infinitesimal generators in~$\UD$, see e.g. \cite[Theorem~10.1.10 on p.\,278]{BCD-Book}. Note that ${p\not\equiv0}$ thanks to our convention that all one-parameter semigroups we consider are non-trivial, see Sect.\,\ref{SS_one-param-semigr}. The second equality in~\eqref{EQ_lifting-generators} is again by \cite[Proposition~2.1]{pavel}. This equality implies, in particular, that $(\Psi_t)$ has no fixed point in~$\UH$ and that the trajectories of~$(\Psi_t)$ cannot converge to any point in~$\Real$. This proves that $(\Psi_t)$ is  non-elliptic and that its Denjoy\,--\,Wolff point is at~$\infty$. In turn, it follows that the Koenigs function~$H$ of~$(\Psi_t)$ satisfies ${H'=1/G_\Psi}$. Since by~\eqref{EQ_lifting-generators}, ${G_\Psi(w+1)=G_\Psi(w)}$ for all ${w\in\UH}$, it follows that $H(w+1)=H(w)+c_0$ for all ${w\in\UH}$ and some constant~$c_0\in\C$. Let ${w_0:=i(2\pi)^{-1}}$. Then, using the variable change $z=e^{2\pi i w}$, we have
$$
c_0=H(w_0+1)-H(w_0)=\int\limits_{w_0}^{w_0+1}\!\!\frac{1}{G_\Psi(w)}\,\di w~=\varointctrclockwise\limits_{|z|=1/e}\!\!\frac{2\pi}{ip(z)}\,\frac{\di z}{2\pi i z}~=~\frac{2\pi}{i p(0)}.
$$
This proves~\eqref{EQ_Koenigs-function-periodicity}, which in turn implies, in view of Abel's equation ${H\circ\Psi_t}={H+t}$ and univalence of~$H$, identity~\eqref{EQ_semigroup-periodicity}.\\

Furthermore, from~\eqref{EQ_Koenigs-function-periodicity} it follows that the base space~$S$ of the canonical model for~$(\Psi_t)$ contains all horizontal lines of the form $\Real+H(i)+2\pi k/\big(ip(0)\big)$ with ${k\in\Z}$. This shows that if $p$ is not an imaginary constant and hence ${\Re p>0}$, then $S$ cannot be a half-plane or a horizontal strip, and as a consequence, $(\Psi_t)$ is parabolic of zero hyperbolic step. If $p$ is an imaginary constant, then using~\eqref{EQ_lifting-generators} it is easy to see that $\Psi_t$'s are parabolic automorphisms of~$\UH$.
\medskip

It remains to prove~\ref{IT_semigroup-to-disk}. Let $(\Psi_t)$ be as in the hypothesis of~\ref{IT_semigroup-to-disk}. Then each $\Psi_t$ is univalent and satisfies~\eqref{EQ_semigroup-periodicity}. Hence by Lemma~\ref{Lem:intertwining-exp}\,\ref{IT_periodic-univalent}, there exists a family $(\phi_t)_{t\ge0}$ of univalent holomorphic functions in~$\UD$ with ${\phi_t(0)=0}$ for all ${t\ge0}$ and satisfying identity~\eqref{EQ_semigroup-lifitng}. Since ${\Psi_t(\UH)\subset\UH}$, identity~\eqref{EQ_semigroup-lifitng} implies that ${\phi_t(\UD^*)\subset\UD^*}$, so that ${\phi_t(\UD)\subset\UD}$ for all ${t\ge0}$. Since ${\Psi_0=\id_\UH}$, from~\eqref{EQ_semigroup-lifitng} for ${t:=0}$ it follows that ${\phi_0=\id_\UD}$. Furthermore, using~\eqref{EQ_semigroup-lifitng} two more times, we get
\begin{eqnarray*}
\phi_t\big(\phi_s(e^{2\pi i w})\big) &=& \phi_t\big(\exp\big(2\pi i\,\Psi_s(w)\big)\big)~=~\exp\big(2\pi i\,\Psi_t\big(\Psi_s(w)\big)\big)\\
&=& \exp\big(2\pi i\,\Psi_{t+s}(w)\big)~=~\phi_{t+s}(e^{2\pi i w})\quad\text{for all~$~w\in\UH$},
\end{eqnarray*}
and we immediately conclude that $\phi_t\circ\phi_s=\phi_{t+s}$ for all $\,{t,s\ge0}$.

Finally, since for any $w\in\UH$, ${\Psi_t(w)\to w}$ as ${t\to0^+}$, using~\eqref{EQ_semigroup-periodicity} it is easy to see that ${\phi_t(z)\to z}$ as  ${t\to0^+}$ for any ${z\in\UD^*}$. Thus recalling that $\phi_t(0)=0$ for all ${t\ge0}$ we may conclude that $(\phi_t)$ as a continuous one-parameter semigroup with Denjoy\,--\,Wolff point at~$0$, as desired.
\end{proof}

\begin{proposition}\label{PR_forExample}
Let $(\Psi_t)$ be a one-parameter semigroup in~$\UH$ with Denjoy\,--\,Wolff point at~$\infty$ and infinitesimal generator ${G:\UH\to\C}$. Suppose that ${\Psi_t(w+1)=\Psi_t(w)+1}$ for all~${t\ge0}$ and all~${w\in\UH}$. Then:
\begin{Ourlist}
\item  There exists finite non-zero angular limit
\begin{equation}\label{EQ_asIm_to_infty}
 G(\infty):=\lim_{\Im w\to+\infty}G(w),\quad\text{ with $~\Im G(\infty)\ge0$.}
\end{equation}

\item
Let $\varphi\in\U(\UD)$ be a self-map with canonical model ${(\UH,h,z\mapsto z+1)}$.
For every $t\ge0$ such that ${\Psi_t(\Omega)\subset\Omega:=h(\UD)}$ we have
\begin{equation}\label{EQ_forSmap}
 \Smap_{\varphi}(\psi_t)=tG(\infty),\quad  \text{ where }\ \psi_t:=h^{-1}\circ\Psi_t\circ h~\in~\Zen(\varphi).
\end{equation}
\end{Ourlist}
\end{proposition}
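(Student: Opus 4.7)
\smallskip

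\noindent\emph{Plan of proof.} The strategy is to reduce everything to the disc picture via Proposition~\ref{PR_semigroup-lifitng}\,\ref{IT_semigroup-to-disk}, so that the Berkson--Porta representation on $\UD$ does the heavy lifting. Applying that proposition to the given $(\Psi_t)$, we obtain a (non-trivial) continuous one-parameter semigroup $(\phi_t)\subset\Hol(\UD)$ elliptic at $0$ with $\exp(2\pi i\Psi_t(w))=\phi_t(e^{2\pi iw})$, and from part~\ref{IT_lifting-generators} of the same proposition, a holomorphic $p\colon\UD\to\C$ with $\Re p\ge 0$ such that $G_\phi(z)=-zp(z)$ and
\[
G(w)\,=\,\frac{i}{2\pi}\,p\big(e^{2\pi iw}\big)\qquad\text{for all }w\in\UH.
\]

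\smallskip

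\noindent\emph{Part (1).} Since $e^{2\pi i w}\to 0$ as $\Im w\to+\infty$, the formula above immediately gives the existence of
\[
G(\infty)\,=\,\lim_{\Im w\to+\infty}G(w)\,=\,\tfrac{i}{2\pi}\,p(0),
\]
and $\Im G(\infty)=\Re p(0)/(2\pi)\ge 0$. To see $G(\infty)\neq0$, suppose $p(0)=0$. Then $\Re p$ attains its infimum over $\UD$ at the interior point $0$, so by the minimum principle $\Re p\equiv 0$, hence $p$ is an imaginary constant, and thus $p\equiv 0$. But this forces $G_\phi\equiv 0$ and $(\phi_t)$, and therefore $(\Psi_t)$, to be trivial, contradicting our standing convention. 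This proves~\eqref{EQ_asIm_to_infty}.

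\smallskip

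\noindent\emph{Part (2).} Commutativity $\varphi\circ\psi_t=\psi_t\circ\varphi$ is immediate from $\Psi_t(w+1)=\Psi_t(w)+1$ and $\varphi=h^{-1}\circ(z\mapsto z+1)\circ h$, so $\psi_t\in\Zen(\varphi)$. To compute $\Smap_\varphi(\psi_t)$, the goal is to put $\Psi_t$ in the canonical form of Proposition~\ref{positivo1}, namely
\[
\Psi_t(w)\,=\,w\,+\,F_t\big(e^{2\pi iw}\big),\qquad w\in\UH,
\]
with $F_t\in\Hol(\UD,\UH\cup\Real)$, and to identify $F_t(0)$. Indeed, the periodicity hypothesis implies that $w\mapsto\Psi_t(w)-w$ is $1$-periodic and holomorphic on $\UH$, hence descends through $z=e^{2\pi iw}$ to a holomorphic function $F_t\colon\UD^*\to\C$. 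From the lifting identity $e^{2\pi i\Psi_t(w)}=\phi_t(e^{2\pi iw})$ one obtains
\[
F_t(z)\,=\,\frac{1}{2\pi i}\,\log\!\frac{\phi_t(z)}{z}
\]
for a single-valued branch of the logarithm, the single-valuedness being guaranteed precisely by the periodicity of $\Psi_t(w)-w$. Since $\phi_t(0)=0$ and $\phi_t'(0)=\exp(-tp(0))\neq0$, the ratio $\phi_t(z)/z$ extends holomorphically and non-vanishingly to $\UD$, so $F_t$ extends holomorphically to the whole $\UD$ with
\[
F_t(0)\,=\,\frac{1}{2\pi i}\,\log\phi_t'(0)\,=\,\frac{-tp(0)}{2\pi i}\,=\,\frac{it\,p(0)}{2\pi}\,=\,tG(\infty).
\]

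\smallskip

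\noindent\emph{Conclusion.} With this representation in hand, the formula $\Smap_\varphi(\psi)=F(0)$ established in the proof of Theorem~\ref{Thm:PHS-S} (equation~\eqref{EQ_Smap_F(0)}) applied to $\psi=\psi_t$ yields $\Smap_\varphi(\psi_t)=F_t(0)=tG(\infty)$. There is no serious obstacle; the only point requiring attention is justifying that the logarithmic primitive in the definition of $F_t$ is single-valued on $\UD^*$ and extends holomorphically across $0$, and both assertions follow from the $1$-periodicity of $\Psi_t(w)-w$ together with $\phi_t'(0)\neq0$.
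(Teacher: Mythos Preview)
Your Part~(A) follows exactly the paper's line: invoke Proposition~\ref{PR_semigroup-lifitng} to get $G(w)=\tfrac{i}{2\pi}p(e^{2\pi iw})$ and read off $G(\infty)=\tfrac{i}{2\pi}p(0)$. Your argument that $p(0)\neq0$ is a welcome addition; the paper leaves that implicit.

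For Part~(B) your route differs from the paper's. The paper passes through the \emph{Koenigs function} $H$ of $(\Psi_t)$: by~\eqref{EQ_Koenigs-function-periodicity} the map $h_*:=G(\infty)\,H\circ h$ satisfies both $h_*\circ\varphi=h_*+1$ and $h_*\circ\psi_t=h_*+tG(\infty)$, and then Remark~\ref{Rem:cinPHP} identifies $\Smap_\varphi(\psi_t)$ with the constant in the second equation. You instead stay inside the representation of Proposition~\ref{positivo1} and compute $F_t(0)$ directly via the disc semigroup $(\phi_t)$. Your approach is lighter once formula~\eqref{EQ_Smap_F(0)} is in hand, while the paper's approach exhibits an explicit simultaneous linearization, which fits the theme of Section~\ref{S_simultaneous}.

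There is, however, one genuine gap in your computation of $F_t(0)$. From $e^{2\pi iF_t(z)}=\phi_t(z)/z$ you only get $F_t(0)\equiv\tfrac{-tp(0)}{2\pi i}\pmod{\Z}$; writing ``$\log\phi_t'(0)=-tp(0)$'' presupposes the correct branch, and knowing that $\Im F_t(0)\ge0$ and $\Im\,tG(\infty)\ge0$ does not single it out. The fix is short: since $\Im G\ge0$ on $\UH$ (because $\Re p\ge0$), we have $\Im\Psi_s(iy)\ge y$ for all $s\ge0$, hence $e^{2\pi i\Psi_s(iy)}\to0$ as $y\to+\infty$ uniformly in $s\in[0,t]$, and therefore
\[
F_t(0)=\lim_{y\to+\infty}\big(\Psi_t(iy)-iy\big)=\lim_{y\to+\infty}\int_0^t G(\Psi_s(iy))\,ds=\int_0^t G(\infty)\,ds=tG(\infty).
\]
Alternatively, argue that $s\mapsto F_s(0)-sG(\infty)$ is continuous on $[0,t]$, integer-valued, and vanishes at $s=0$. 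Either way, once this is said your proof is complete.
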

\begin{proof}
By Proposition~\ref{PR_semigroup-lifitng}, there exists a holomorphic function $p:\UD\to\C$ with ${\Re p\ge0}$ and such that $G(w)={ip(e^{2\pi i w})/(2\pi)}$ for all ${w\in\UH}$. This implies~\eqref{EQ_asIm_to_infty}, with $G(\infty)={ip(0)/(2\pi)}$.  In particular, identity~\eqref{EQ_Koenigs-function-periodicity} can be rewritten as ${G(\infty)H(w+1)}={G(\infty)H(w)+1}$, where $H$ stands for the Koenigs function of~$(\Psi_t)$. It follows that the univalent function $h_*:=G(\infty)H\circ h$ satisfies Abel's equation ${h_*\circ\varphi}=h_*+1$. Moreover, for every fixed ${t\ge0}$ such that ${\Psi_t(\Omega)\subset\Omega}$, we have
$$
 h_*\circ\psi_t=G(\infty)H\circ\Psi_t\circ h=G(\infty)(H+t)\circ h=h_*+G(\infty)t.
$$
Therefore, $h_*$  satisfies the system of Abel's equations~\eqref{EQ_two-Abel-in-remark} for ${\psi:=\psi_t}$ and $c_{\varphi,\psi}:=G(\infty)t$. This proves~\eqref{EQ_forSmap}, because by Remark~\ref{Rem:cinPHP}, the only value of $c_{\varphi,\psi}$ for which~\eqref{EQ_two-Abel-in-remark} admits a univalent solution is~$\Smap_\varphi(\psi)$.
\end{proof}

\section{Examples}\label{S_examples}
We start with a rather obvious example, the centralizer of a parabolic automorphism.
\begin{example}\label{EX_parab-autom}
Let $\varphi$ be a parabolic automorphism of~$\UD$ with Denjoy\,--\,Wolff point~$\tau$. Then there exists ${a\in\Real\setminus\{0\}}$ such that the canonical holomorphic model for~$\varphi$ is $\big(\sgn(a)\UH,h,z\mapsto z+1\big)$, where $h(z):={ia(\tau+z)/(\tau-z)}$ for all ${z\in\UD}$. Without loss of generality we will restrict our consideration to the case ${a>0}$. Clearly, the Koenigs domain $\Omega:=h(\UD)$ coincides with the base space, e.g. with~$\UH$. Therefore, by Theorem~\ref{Thm:general},
$$
\Zen(\varphi)=\big\{h^{-1}\circ g\circ h: g\in\Zen_\UH(w\mapsto w+1)\big\}.
$$
Using Lemma~\ref{Lem:intertwining-exp} or \cite[Theorem~1.2.27]{Abate}, it is easy to see that there is a natural surjective map from $\Zen(\varphi)$ onto the subsemigroup $\U_0(\UD)$ of $\U(\UD)$ formed by of all $f\in\U(\UD)$ satisfying ${f(0)=0}$. This map is given by
$$
\Zen(\varphi)\,\ni\,\psi~\mapsto~f_\psi\in\U_0(\UD), \qquad f_\psi(e^{2\pi i w})=\exp\big(2\pi i\big(h\circ \psi\circ h^{-1}\big)(w)\big)\quad\text{for all~$w\in\UH$}.
$$
A simple calculation shows that this map is a \textit{semigroup homomorphism} and that the preimage of any ${f\in\U_0(\UD)}$ is a countable set.

Note also that for the additive semigroups $\mathcal A_\varphi$ and $\mathcal A_\varphi^*$ defined in Theorems~\ref{PropA_varphi} and~\ref{Thm:PHS-S}, in this example, we have $\mathcal A^*_\varphi=\mathcal A_\varphi={\UH\cup\R}$. Indeed, the latter of the equality signs can be easily checked using the very definition of~$\mathcal A_\varphi$. The former equality sign holds because $\mathcal A_\varphi\subset\mathcal A^*_\varphi\subset{\UH\cup\R}$ by Theorem~\ref{Thm:PHS-S} and Remark~\ref{RM_left-inverse}.

It is worth mentioning that although, being an automorphism, $\varphi$ is of positive hyperbolic step, its centralizer $\Zen(\varphi)$ contains a lot of parabolic self-maps of zero hyperbolic step, for example $h^{-1}\circ{(w\mapsto w+c)}\circ h$ for any ${c\in\UH}$.

Finally, it follows from Proposition~\ref{PR_semigroup-lifitng} that there is a one-to-one correspondence between continuous one-parameter semigroups contained in~$\Zen(\varphi)$ and those contained in~$\U_0(\UD)$, and that each semigroup ${(\psi_t)\subset\Zen(\varphi)}$ is of zero hyperbolic step, unless it consists of parabolic automorphisms (and hence contains~$\varphi$ or its inverse).
\end{example}

\begin{remark}\label{RM_quotient}
In the above example the centralizer is huge: in fact, the quotient semigroup $\Zen(\varphi)\,/\!\widesim{\varphi}$, where $\psi_1\widesim{\varphi}\psi_2$ if $\psi_1=\varphi^{\circ k}\circ\psi_2$ for a some ${k\in\Z}$, can be identified with $\U_0(\UD)$. In the next example, the situation is quite opposite.
\end{remark}

\begin{example}
For any $\varphi\in\U(\UD)$, obviously $\{\varphi^{\circ n}:n\in\N_0\}\subset\Zen(\varphi)$. This inclusion is not necessarily strict. Indeed,  by Proposition~\ref{PROP_reciprocal} applied with $\mathcal A:=\N_0$, there exists a hyperbolic self-map ${\varphi\in\U(\UD)}$ with ${\mathcal A_\varphi=\N_0}$. By Theorem~\ref{TH_isomorphism}, the centralizer of such a self-map coincides exactly with $\{\varphi^{\circ n}:n\in\N_0\}$.
\end{example}

In the next example we construct a \textit{non-embeddable} parabolic self-map~$\varphi\in\U(\UD)$ such that $\id_\UD$ is not isolated $\Zen(\varphi)$. Recall that by Theorem~\ref{TH_dichotomy-nonelliptic}, $\Zen(\varphi)$ must contain a non-trivial continuous one-parameter semigroup if $\id_\UD$ is an accumulation point of~$\Zen(\varphi)$. However, $\varphi$ does not have to be an element of this semigroup.

\begin{example}\label{EX_non-non}
For each $n\in \Z$ let $\Gamma_{n}:=\{n+iy: \, y\leq-(n+1)\}$, and let $h$ be a conformal map of~$\UD$ onto ${\Omega:=\C\setminus \bigcup_{n\in \Z}\Gamma_{n}}$ satisfying ${h(0)=0}$.  Define $\varphi:={h^{-1}\circ(h+1)}$. It is easy to see that $\varphi\in\U(\UD)$ and that ${(\C,h,z\mapsto z+1)}$ is the canonical holomorphic model for~$\varphi$. By Theorem~\ref{Thm:model}, $\varphi$ is parabolic of zero hyperbolic step.
Moreover, it is not difficult to check that the semigroup $\mathcal A_\varphi$, defined in Theorem~\ref{PropA_varphi}, coincides with $\big\{k+it:\,k\in\Z,~t\ge -k\big\}.$ In particular, ${\Omega+it}\subset\Omega$ for any ${t\ge0}$.
Therefore, $(\phi_{t}):={\big(h^{-1}\circ (z\mapsto z+it)\circ h\big)}$ is a continuous one-parameter semigroup in~$\UD$ contained in~$\mathcal Z(\varphi)$. It follows that the map $\id_\UD$ is not isolated in the centralizer of~$\varphi$.
At the same time, the inclusion ${\Omega+t\subset\Omega}$ fails for any non-integer ${t>0}$. Thus, $\varphi$ is  not embeddable; see Remark~\ref{RM_emb-ty_via_K-dom}.
\end{example}

In the next three examples we make use of the continuous one-parameter semigroup $(\Psi^*_t)$ in the upper half-plane~$\UH$ obtained, as explained in Proposition~\ref{PR_semigroup-lifitng}\,\ref{IT_semigroup-from-disk},  by lifting the continuous one-parameter semigroup in~$\UD$ generated by $G_0(z):={-z(1+z)/(1-z)}$.  The infinitesimal generator of~$(\Psi^*_t)$ is
\begin{equation*}
G_*(w)=\frac{i}{2\pi}\,\frac{1+e^{2\pi i w}}{1-e^{2\pi i w}}=-\frac1{2\pi}\cot(\pi w),\quad w\in\UH,
\end{equation*}
and up to an additive constant, its Koenigs function $h_*$ is given by $h_*(w)=2\log(\cos\pi w)$.

\medskip

The centralizer of a parabolic automorphism, discussed in Example~\ref{EX_parab-autom}, is clearly non-abelian, e.g. because otherwise the quotient semigroup $\Zen(\varphi)\,/\!\widesim{\varphi}$ (see Remark~\ref{RM_quotient}) would be also abelian. In many respects, automorphisms constitute a very particular case, quite different from that of self-maps in~${\Hol(\UD)\setminus\Aut}$.
In the example below we construct a non-elliptic $\varphi\in\U(\UD)$ \textit{different from an automorphism} such that $\Zen(\varphi)$ is also non-abelian. In particular, it follows that ${\Zen(\varphi)\neq\Tmap_\varphi(\mathcal A_\varphi)}$ in this case. Recall that according to Theorems~\ref{Thm:0HS} and~\ref{TH_isomorphism}, such a situation can occur for a non-elliptic self-map $\varphi\in\U(\UD)$ only if $\varphi$ is parabolic of positive hyperbolic step.

\begin{example}[Non-commutative centralizer]\label{EX_Z-non-abelian}
Let $(\Psi^*_t)$, $G_*$, and $h_*$ be as above, and consider the continuous one-parameter semigroup $(\phi_t)$ in~$\UD$ whose Denjoy\,--\,Wolff is point at ${\tau=1}$ and whose Koenigs domain is $\Omega:={\UH\setminus\{w:\Re w\le0,\,\Im w\le1\}}$.
By symmetry w.r.t. the imaginary axis, $\Psi^*_t(Q_k)\subset Q_k:={\{w\in\UH:(-1)^k\Re w\ge0\}}$ for ${k=1,2}$ and all ${t\ge0}$. Moreover, since the Denjoy\,--\,Wolff point of~$(\Psi^*_t)$ is at~$\infty$, $\Psi^*_t(\UH_1)\subset\UH_1:={\{w:\Im w>1\}}$ for all~${t\ge0}$. It follows that ${\Psi^*_t(\Omega)\subset\Omega}$ for all~$t\ge0$.  Therefore, taking into account that by~\eqref{EQ_semigroup-periodicity}, ${\Psi^*_t(w+1)}={\Psi^*_t(w)+1}$ for all ${w\in\UH}$ and all ${t\ge0}$, and using Theorem~\ref{Thm:general}, we conclude that $\Zen(\phi_1)$ contains the whole continuous one-parameter semigroup $(\psi^*_t)$ given by $\psi^*_t:={h^{-1}\circ\Psi^*_t\circ h}$, ${t\ge0}$, where $h$ is the Koenigs map of~$(\phi_t)$.
Clearly, also $(\phi_t)\subset\Zen(\phi_1)$.

We have ${h\circ\phi_t}={h+t}$ and ${(h_*\circ h)\circ\psi_t^*}={h_*\circ\Psi_t^*\circ h}={(h_*\circ h)+t}$ for all ${t\ge0}$. It follows that the infinitesimal generators of the continuous one-parameter semigroups $(\phi_t)$ and $(\psi_t^*)$ are given by ${G_1=1/h'}$ and $G_2={1/(h_*\circ h)'}={(G_*\circ h)\,G_1}$, respectively. In particular, $G_2$ is not a constant multiple of~$G_1$. Therefore, by~\cite[Theorem~4]{conTauraso}, the semigroups $(\phi_t)$ and $(\psi^*_t)$ do not commute, i.e. there exist ${t>0}$ and ${s>0}$ such that ${\psi^*_t\circ\phi_s}\neq{\phi_s\circ\psi^*_t}$. (Compare with \cite[Theorem~5]{conReich} or \cite[Theorem~6.18]{EliShobook10}.) It immediately follows that $\Zen(\phi_1)$ is non-abelian.
\end{example}

\begin{example}\label{EX_again-non-abelian}
Let $(\Psi^*_t)$, $G_*$, and $h_*$ be as above. Consider now the self-map $\varphi\in\U(\UD)$ with the Denjoy\,--\,Wolff point at~${\tau=1}$ and the Koenigs domain  $$\Omega:=\UH\,\,\,\Big\backslash\,\bigcup_{k\in\N_0}~\Gamma_k,\quad\Gamma_k:=\{-k+iy:y\in[0,1]\}.$$
Denote by $h$ the Koenigs map of~$\varphi$ and let ${\psi^*_t:={h^{-1}\circ\Psi^*_t\circ h}}$ for ${t\ge0}$.
As a direct computation shows, $h_*(\Omega)$ is starlike at infinity. Therefore, all $\psi_t^*$'s are well-defined univalent holomorphic self-maps of~$\UD$. Moreover, similarly to the previous example, we may conclude that $(\psi_t^*)$ is a continuous one-parameter semigroup contained in~$\Zen(\varphi)$.

In contrast to the previous example, by Remark~\ref{RM_emb-ty_via_K-dom}, $\varphi$ is not embeddable.
At the same time, $\Zen(\varphi)$ contains the semigroup $(\psi_t)$ defined by $\psi_t:={h^{-1}\circ (z\mapsto z+it)\circ h}$, which does not commute with $(\psi_t^*)$. (The latter can be seen again by appealing to \cite[Theorem~4]{conTauraso}.) Therefore, also in this example, $\Zen(\varphi)$ is non-abelian.
\end{example}

The next example shows that for parabolic self-maps $\varphi\in\U(\UD)$ of positive hyperbolic step, $\mathcal A_\varphi$ can be a proper subset of ${\mathcal A_\varphi^*:=\Smap_\varphi(\Zen(\varphi))}$; see Theorems~\ref{PropA_varphi} and~\ref{Thm:PHS-S} for the definitions of~$\mathcal A_\varphi$ and $\Smap_\varphi$.

\begin{example}\label{EX_A-neq-Astar}
Once  again, let $(\Psi^*_t)$, $G_*$, and $h_*$ be as above, and consider the slits
$$
 \widetilde\Gamma_k:=h_*^{-1}\big(\{x\,+\,2\pi i(k+\tfrac13)\,:\,x\le0\}\big).
$$

Let ${\varphi\in\U(\UD)}$ be the self-map with the Denjoy\,--\,Wolff point at~${\tau=1}$ and the Koenigs domain $\Omega:=\UH\setminus\bigcup_{\,k\in\N_0}\widetilde\Gamma_k$.
Given a fixed $a\in \R$, for all $t>0$ we have
$$
q(t):=\tan\big(\Im h_{*}(a+it)\big)=-\tan(\pi a)\tanh(\pi t),
$$
which can be a finite constant only if $q\equiv0$.
Thus, $\widetilde\Gamma_k$'s do not lie on straight lines parallel to the imaginary axis. Therefore, the set $\mathcal A_\varphi$ does not contain points of the form ${w:=it}$ with small~${t>0}$. At the same time, as in the previous example, $\Zen(\varphi)$ contains the one-parameter semigroup $(\psi^*_t)$ defined by $\psi^*_t:={h^{-1}\circ\Psi^*_t\circ h}$, ${t\ge0}$, where $h$ stands for the Koenigs map of~$\varphi$. Since $G_*(\infty)=i/(2\pi)$, by Proposition~\ref{PR_forExample}, it follows that ${\{it:t\ge0\}}\subset\Smap_\varphi\big(\Zen(\varphi)\big)$. Therefore, ${\Smap_\varphi(\Zen(\varphi))\neq\mathcal A_\varphi}$.

It is worth mentioning that the slits $\Gamma_k$ in the previous example are backward orbits of the semigroup~$(\Psi^*_t)$ landing at the pole of the infinitesimal generator within a finite time. In this example, the slits $ \widetilde\Gamma_k$ are backward orbits tending to boundary repelling fixed points of~$(\Psi^*_t)$ in the $\alpha$-limit, i.e. as~$t\to-\infty$. (For more details on backward dynamics of continuous one-parameter semigroups, interested readers are referred to~\cite{BCKWRD1,BCKWRD2} or to~\cite[Sect.\,13]{BCD-Book}).
\end{example}

\begin{remark}\label{RM_examples-not-iso}
 In each of the above three examples, the semigroup homomorphisms $${\Tmap_\varphi:\mathcal A_\varphi\to\Zen(\varphi)}\qquad\text{ and }\qquad{\Smap_\varphi:\Zen(\varphi)\to\mathcal A_\varphi^*}:=\Smap\big(\Zen(\varphi)\big)$$ are not isomorphisms. Indeed, in these examples, $\Smap_\varphi$ is not injective, while the injective homomorphism~$\Tmap_\varphi$ fails to be surjective.
\end{remark}

\begin{remark}\label{RM_PHS-commute-with-PHS}
Moreover, in each of the above three examples, the elements of the semigroup $(\psi_t^*)$, in contrast to~$\varphi$, are of zero hyperbolic step. Indeed, the orbits of $(\Psi_t^*)$ are tangent at the Denjoy\,--\,Wolff point~$\infty$ to the imaginary axis. Since in each of the three examples, the map $1/h$ is conformal and hence isogonal at~$\tau$, see e.g. \cite[Sect.\,4.3]{Pommerenke:BB}, all the trajectories of $(\psi_t^*)$ approach~$\tau$ along the radial direction. This is only possible if $(\psi_t^*)$ is of zero hyperbolic step, see e.g. \cite[Sect.\,17.5]{BCD-Book}.
\end{remark}

\section{Appendix}
For a given non-elliptic $\varphi\in\U(\UD)$, let $h\in\Hol(\UD,\C)$ be a univalent solution to Abel's equation
$
         h\circ\varphi=h+1
$
and let
$$
S[h]~:=~\bigcup_{n\in\Natural}\, h(\UD)-n.
$$
Then, clearly, $S[h]$ is a simply connected domain, $S[h]+1=S[h]$, and $(S[h],h,z\mapsto z+1)$ is a holomorphic model for~$\varphi$ but, in principal, quite different from the canonical one. Nevertheless, as the theorem below shows, it is possible, similarly to the canonical models, to distinguish between hyperbolic and parabolic self-maps looking at~$S[h]$.

\begin{theorem}\label{TH_ch-para-hyper}
	Let $\varphi\in\U(\UD)$ be non-elliptic, and let $h$ and $S[h]$ be as above. The following statements hold:
	\begin{Ourlist}
		\item\label{IT_(A)} $\varphi$ is parabolic of zero hyperbolic step if and only if $S[h]=\C$.\smallskip
		\item\label{IT_(B)} $\varphi$ is parabolic of positive hyperbolic step with canonical model $(\H,h_0,z\mapsto z+1)$ if and only if $S[h]\neq\C$ and there exists $a\in\R$ such that $\H+ia\subset S[h]$. \smallskip
		\item\label{IT_(C)} $\varphi$ is parabolic of positive hyperbolic step with canonical model $(-\H,h_0,z\mapsto z+1)$ if and only if $S[h]\neq\C$ and there exists $a\in\R$ such that $-\H+ia\subset S[h]$.		
	\end{Ourlist}

In particular:
\begin{Ourlist}\setcounter{enumi}{3}
 \item\label{IT_(D)} $\varphi$ is parabolic if and only if $S[h]$ contains a horizontal half-plane.
\end{Ourlist}
\end{theorem}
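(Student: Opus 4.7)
The plan is to compare the triple $(S[h], h, z\mapsto z+1)$, which by construction is a holomorphic model of $\varphi$ in the sense of Definition~\ref{DF_holomorphic-model}, with the canonical model $(S_0, h_0, z\mapsto z+1)$ provided by Theorem~\ref{Thm:model}. By Theorem~\ref{Thm:uniqness}, there exists a biholomorphism $\eta\colon S_0\to S[h]$ with $\eta(z+1)=\eta(z)+1$ for all $z\in S_0$. From $h=\eta\circ h_0$ together with the absorption property, one has $S[h]=\eta(S_0)$, so the shape of $S[h]$ is determined by the conformal type of $S_0$ (namely $\C$, a finite-width horizontal strip, $\H$, or $-\H$, according to the dynamical type of $\varphi$) and by the equivariance of $\eta$. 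The bridge between these two pieces of data will be Lemma~\ref{Lem:intertwining-exp}, and part~\ref{IT_(D)} will fall out as a by-product of \ref{IT_(A)}--\ref{IT_(C)}.

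For \ref{IT_(A)}: if $\varphi$ is parabolic of zero hyperbolic step then $S_0=\C$ by Theorem~\ref{Thm:model}\,\ref{IT_HM-para-0HS}, so $\eta$ is an entire univalent function commuting with $z\mapsto z+1$, hence affine of the form $z\mapsto z+\beta$; consequently $S[h]=\C$. Conversely, $S[h]=\C$ realizes $S_0$ as conformally equivalent to $\C$, forcing $S_0=\C$ by Liouville's theorem, which by Theorem~\ref{Thm:model} means $\varphi$ is parabolic of zero hyperbolic step.

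For the forward direction of \ref{IT_(B)}, assume $S_0=\H$, so $\eta\colon\H\to S[h]$ is univalent with $\eta(w+1)=\eta(w)+1$. By Lemma~\ref{Lem:intertwining-exp}\,\ref{IT_periodic-univalent}, there exists $f\in\U(\D,\C)$ with $f(0)=0$ such that $\exp(2\pi i\,\eta(w))=f(\exp(2\pi i\,w))$ for all $w\in\H$. Since $f$ is univalent with $f(0)=0$, the open mapping theorem yields $\rho>0$ with $\{0<|z|<\rho\}\subset f(\D\setminus\{0\})$; pulling back through $w\mapsto\exp(2\pi i\,w)$ and using the $\Z$-invariance of $\eta(\H)$ gives $\eta(\H)\supset\{w\colon\Im w>-\log\rho/(2\pi)\}$, i.e.\ $\H+ia\subset S[h]$ for a suitable $a\in\R$; combined with $S[h]\neq\C$ from \ref{IT_(A)} this proves the forward implication. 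For the converse, assume $S[h]\ne\C$ and $\H+ia\subset S[h]$, and define $\tilde\eta(w):=\eta^{-1}(w+ia)$ for $w\in\H$; this is univalent from $\H$ into $S_0$ with $\tilde\eta(w+1)=\tilde\eta(w)+1$, so Lemma~\ref{Lem:intertwining-exp}\,\ref{IT_periodic-univalent} supplies $\tilde f\in\U(\D,\C)$ with $\tilde f(0)=0$ satisfying $\exp(2\pi i\,\tilde\eta(w))=\tilde f(\exp(2\pi i\,w))$. Letting $\Im w\to+\infty$ forces $\Im\tilde\eta(w)\to+\infty$, which is incompatible with $S_0$ being a finite-width horizontal strip (the hyperbolic case) or $S_0=-\H$; thus $S_0=\H$, as required. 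Part~\ref{IT_(C)} follows from \ref{IT_(B)} via the reflection $\varphi\mapsto\overline{\varphi(\bar\cdot)}$, which swaps the canonical base spaces $\H$ and $-\H$ (cf.\ Remark~\ref{RM_normalization}). Part~\ref{IT_(D)} is then immediate: if $\varphi$ is parabolic, then $S[h]$ contains a half-plane by \ref{IT_(A)}--\ref{IT_(C)}; conversely, if $S[h]$ contains $\pm\H+ia$, the same $\tilde\eta$-argument (using the opposite exponential covering in the $-\H$ case) rules out $S_0$ being a finite-width strip, forcing $\varphi$ to be parabolic.

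The main technical obstacle is the forward containment step in \ref{IT_(B)}, where the infinitesimal information ``$f$ is univalent with $f(0)=0$'' must be upgraded to a genuine inclusion of an open half-plane in $\eta(\H)$. This rests entirely on Lemma~\ref{Lem:intertwining-exp}; once the correspondence between periodic univalent maps on $\H$ and disc maps fixing $0$ is in hand, the remaining manipulations are routine consequences of the open mapping theorem and the exponential covering $\H\to\D^*$.
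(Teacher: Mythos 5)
Your proposal is correct, and for parts \ref{IT_(B)}--\ref{IT_(D)} it follows the paper's argument closely: Lemma~\ref{Lem:intertwining-exp}\,\ref{IT_periodic-univalent} is applied to the conjugating biholomorphism (restricted to a suitable half-plane) in both directions, translating the local datum $f(0)=0$ into the asymptotic statement $\Im\to+\infty$ that pins down the conformal type of the canonical base space. The one genuine divergence is in part \ref{IT_(A)}. To prove ``zero hyperbolic step $\Rightarrow S[h]=\C$'' the paper argues by contraposition via the hyperbolic metric: if $S[h]\neq\C$, then $S[h]$ is a hyperbolic domain, $z\mapsto z+1$ is an isometry of $\rho_{S[h]}$, and Schwarz--Pick applied to $h$ yields $\lim_n\rho_\D\big(\varphi^{\circ n}(z_0),\varphi^{\circ(n+1)}(z_0)\big)\ge\rho_{S[h]}\big(h(z_0),h(z_0)+1\big)>0$, i.e.\ positive hyperbolic step. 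You instead lean on model uniqueness (Theorem~\ref{Thm:uniqness}) to produce a biholomorphism $\eta:S_0\to S[h]$; when $S_0=\C$, $\eta$ is a univalent entire map commuting with $z\mapsto z+1$, hence a translation, so $S[h]=\C$; and conversely $S[h]=\C$ forces $S_0$ to be biholomorphic to $\C$ and therefore equal to $\C$. Your route is shorter and avoids the metric estimate, at the price of invoking the uniqueness clause and the rigidity of univalent entire maps rather than the elementary Schwarz--Pick argument. One small point worth stating explicitly in your converse argument for \ref{IT_(B)}: the possibility $S_0=\C$ must also be excluded; this is supplied by your hypothesis $S[h]\neq\C$ together with part \ref{IT_(A)}, but the inference is left tacit.
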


\begin{remark}\label{RM_ch-para-hyper}
Clearly, if $S[h]$ contains a half-plane which is not horizontal, then ${S[h]=\C}$. Therefore, assertion~\ref{IT_(D)} in the above theorem is equivalent to the following statement:
\textit{$\varphi$ is parabolic if and only if $S[h]$ contains a half-plane (not necessarily horizontal).}
\end{remark}

In the proof we make use of the following lemma, which is also used in Section~\ref{S_para-positive}.

\begin{lemma}\label{Lem:intertwining-exp}
The following statements hold.
\begin{OurlistM}
 \item\label{IT_periodic-function}  Let $S\subset\C$ be a domain satisfying ${S+1=S}$ and let $F$ be a holomorphic function in~$S$ such that $F(w+1)=F(w)$ for all ${w\in S}$. Then there exists a holomorphic function~$f$ in ${D(S):=\exp\big(2\pi i S\big)}$ such that
    \begin{equation}\label{Eq:intertwining-exp-bis}
       F(w)=f\big(e^{2\pi i\, w}\big)\qquad\text{for all~$~w\in S$}.
    \end{equation}
    \smallskip
\item\label{IT_periodic-univalent}
   Let $g:\UH \to \C$   be univalent such that
   $
   {g(w+1)=g(w)+1}
   $
 for all ${w\in\UH}$. Then, there exists $f\in\U(\UD,\C)$ such that $f(0)=0$ and
    \begin{equation}\label{Eq:intertwining-exp}
       e^{2\pi i\, g(w)}=f\big(e^{2\pi i\, w}\big)\qquad\text{for all~$~w\in\H$.}
   \end{equation}
   \smallskip
 \item\label{IT_periodic-univalent-converse} Conversely, if $f:\UD\to\C$ is a univalent function with ${f(0)=0}$, then there exists a univalent function ${F:\UH\to\C}$ such that
    \begin{equation}\label{EQ_periodic-univalent-converse}
        F(w+1)=F(w)+1\quad\text{and}\quad f(e^{2\pi i w})=\exp\big(2\pi i F(w)\big) \quad\text{for all ${w\in\UH}$.}
     \end{equation}
 \end{OurlistM}
\end{lemma}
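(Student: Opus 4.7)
The plan is to handle the three parts in order, with~\ref{IT_periodic-univalent} being the main obstacle. Part~\ref{IT_periodic-function} is a standard descent through the covering map $\pi(w) := e^{2\pi i w}$ from $S$ onto $D(S)$, whose deck group is generated by $w \mapsto w+1$. Since $F$ is invariant under this deck group, it factors uniquely through a holomorphic function $f$ on $D(S)$: concretely, $f(z) := F(\pi^{-1}(z))$ for any local holomorphic inverse of $\pi$, and well-definedness follows from the periodicity $F(w+1) = F(w)$.

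For part~\ref{IT_periodic-univalent-converse}, univalence of $f$ together with $f(0) = 0$ forces $f(z) \neq 0$ on $\UD^*$, so $w \mapsto f(e^{2\pi i w})$ sends $\UH$ holomorphically into $\C^*$. By simple connectivity of $\UH$, this composition admits a holomorphic logarithm, and setting $F(w) := (2\pi i)^{-1}\log f(e^{2\pi i w})$ produces a holomorphic $F: \UH \to \C$ with $f(e^{2\pi i w}) = \exp(2\pi i F(w))$. The difference $F(w+1) - F(w)$ is a holomorphic integer-valued function, hence a constant integer $n$. Using $f(z) = f'(0)z + O(z^2)$ near $0$, one obtains the asymptotic $F(w) = w + c + o(1)$ as $\Im w \to +\infty$ for a suitable constant $c$, which pins $n = 1$. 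Univalence of $F$ then follows: $F(w_1) = F(w_2)$ yields $e^{2\pi i w_1} = e^{2\pi i w_2}$ via injectivity of $f$, hence $w_1 - w_2 \in \Z$, and the relation $F(w+k) = F(w)+k$ then forces $w_1 = w_2$.

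Part~\ref{IT_periodic-univalent} is the deepest of the three. I would define $f_0: \UD^* \to \C$ by $f_0(e^{2\pi i w}) := e^{2\pi i g(w)}$; the relation $g(w+1) = g(w)+1$ makes this well-defined, and the same exchange of arguments $w \leftrightarrow w+k$ shows $f_0$ is univalent on $\UD^*$. The core task is extending $f_0$ holomorphically across $0$ with $f_0(0) = 0$. Here I would apply part~\ref{IT_periodic-function} to the $1$-periodic function $\phi(w) := g(w) - w$, obtaining $\Phi \in \Hol(\UD^*)$ with $\phi(w) = \Phi(e^{2\pi i w})$, which yields the factorization $f_0(z) = z\,e^{2\pi i \Phi(z)}$. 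The crux is showing that $\Phi$ has a removable singularity at $0$: if instead $\Phi$ had either a pole or an essential singularity there, then $e^{2\pi i \Phi(z)}$ would oscillate between values of exponentially large and exponentially small modulus on every sufficiently small circle around $0$, and the single factor of $z$ cannot absorb this behaviour, so $f_0$ would inherit an essential singularity at $0$. This contradicts univalence of $f_0$ on $\UD^*$, since by Casorati--Weierstrass, injectivity forces any isolated singularity of $f_0$ at $0$ to be either removable or polar. Consequently $\Phi$ is holomorphic on $\UD$, and $f_0(z) = z\,e^{2\pi i \Phi(z)}$ extends to $\UD$ with $f_0(0) = 0$ and $f_0'(0) = e^{2\pi i \Phi(0)} \neq 0$; univalence on all of $\UD$ follows from univalence on $\UD^*$ combined with $f_0^{-1}(0) = \{0\}$.
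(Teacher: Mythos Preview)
Your proof is correct. The overall architecture matches the paper's: part~\ref{IT_periodic-function} is the standard descent through the covering $w\mapsto e^{2\pi i w}$, and part~\ref{IT_periodic-univalent-converse} defines $F$ as (a normalization of) a logarithm of $f(e^{2\pi iw})$ --- which is exactly the paper's construction, since the paper's antiderivative of $q(w)=e^{2\pi iw}f'(e^{2\pi iw})/f(e^{2\pi iw})$ \emph{is} $(2\pi i)^{-1}\log f(e^{2\pi iw})$. You verify the period $F(w+1)-F(w)=1$ via the asymptotic $F(w)=w+c+o(1)$ as $\Im w\to+\infty$; the paper instead computes $\int_w^{w+1}q=\frac{1}{2\pi i}\oint f'/f=1$ via the argument principle. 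Both are one-line checks.

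The one genuine methodological difference is in part~\ref{IT_periodic-univalent}. The paper argues directly on $f_0$: univalence on $\UD^*$ rules out an essential singularity at~$0$ (by Picard), and then the argument-principle integral $\frac{1}{2\pi i}\oint f_0'/f_0=\int_w^{w+1}g'=1$ identifies $0$ as a simple zero. You instead pass through the factorization $f_0(z)=z\,e^{2\pi i\Phi(z)}$ with $\Phi\in\Hol(\UD^*)$ coming from $g(w)-w$, and argue that a non-removable singularity of~$\Phi$ would force $f_0$ to have an essential singularity at~$0$. This is correct and has the pleasant feature that $f_0(0)=0$ and $f_0'(0)=e^{2\pi i\Phi(0)}\neq0$ drop out for free. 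One small caution: your appeal to ``Casorati--Weierstrass'' to exclude an essential singularity of a univalent function is slightly underpowered as stated --- density of the image does not by itself contradict injectivity. You need either Picard's Great Theorem (as the paper invokes) or the elementary combination of Casorati--Weierstrass with the open mapping theorem (pick $w_0=f_0(z_0)$, take $z_n\to0$ with $f_0(z_n)\to w_0$, and for large $n$ find $z_n'\neq z_n$ near $z_0$ with $f_0(z_n')=f_0(z_n)$). With that clarification, your argument is complete.
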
\vskip-0.5ex
\StepP{\ref{IT_periodic-function}} The formula $w\mapsto C(w):={e^{2\pi i w}}$ defines a map from $S$ onto $D(S)$. By the hypothesis, ${F(w_1)=F(w_2)}$ for any ${w_1,w_2\in S}$ satisfying ${C(w_1)=C(w_2)}$. Therefore,  there exists a function ${f:D(S)\to\C}$ such that ${F=f\circ C}$, i.\,e. such that~\eqref{Eq:intertwining-exp-bis} holds. To show that $f$ is holomorphic, it suffices to notice that ${C:S\to D(S)}$ is surjective and locally univalent, and hence if ${z_0\in D(S)}$, then there exists a point ${w_0\in S}$ and its neighbourhood ${V\subset S}$ such that $C(w_0)=z_0$ and $C$ maps $V$ conformally onto some neighbourhood ${U:=C(V)}$ of~$z_0$. It follows that ${f|_U=F\circ (C|_V)^{-1}}$ is holomorphic in~$U$. Since $z_0$ in this argument is an arbitrary point of~$D(S)$, we are done with~\ref{IT_periodic-function}.

\StepP{\ref{IT_periodic-univalent}} Consider the function $F:\H\to \C$ given by $F(w):=e^{2\pi i g(w)}$, ${w\in \H}$. Clearly,
$
{F(w+1)}={e^{2\pi i g(w+1)}}={e^{2\pi i g(w)}}=F(w)
$ for any ${w\in\UH}$.
 By~\ref{IT_periodic-function}, there exists $f: \D^*\to \C$ such that  \eqref{Eq:intertwining-exp-bis} holds, which by the choice of~$F$ immediately implies~\eqref{Eq:intertwining-exp}.
 Let us check that $f$ is univalent in~$\UD^*$. Take $z_1, z_2\in \D^*$ with ${f(z_1)=f(z_2)}$. Then there are $w_j\in \H$ such that  $e^{2\pi i w_j}=z_j$, $j=1,2$. By~\eqref{Eq:intertwining-exp},  ${e^{2\pi i\, g(w_1)}=e^{2\pi i\, g(w_2)}}$ and hence, there is $k\in \Z$ such that $g(w_1)={g(w_2)+k}={g(w_2+k)}$. Since $g$ is univalent, it follows that ${w_1=w_2+k}$. Thus, $z_1=e^{2\pi i w_1}=e^{2\pi i (w_2+k)}=z_2$. Therefore, $f$ is univalent.

Recall that as a consequence of Picard's Great Theorem, univalent functions cannot have isolated essential singularities.
Hence, for $f$, the point  $z=0$ is a removable singularity or a pole.
Moreover, for any $\varepsilon\in(0,1)$, with the notation $w_0(\varepsilon):=i(2\pi)^{-1}\log(1/\varepsilon)$, we have
$$
\frac{1}{2\pi i}\varointctrclockwise\limits_{|z|=\varepsilon}\frac{f'(z)}{f(z)}\di z~=\int\limits_{w_0(\varepsilon)}^{w_0(\varepsilon)+1}\!\!\!\!\!e^{2\pi i w}\frac{f'(e^{2\pi w})}{f(e^{2\pi w})}\di w~=\int\limits_{w_0(\varepsilon)}^{w_0(\varepsilon)+1}\!\!\!\!\!g'(w)\di w~=~1.
$$
Thus, $z=0$ is a simple zero of~$f$, i.e. $f(0)=0$. Clearly, with this extension $f$ is univalent in~$\D$.

\StepP{\ref{IT_periodic-univalent-converse}} The function $q(w):={e^{2\pi i w} f'(e^{2\pi i
w})/f(e^{2\pi i w})}$ is holomorphic in~$\UH$. Let $F$ be one of its antiderivatives. It is easy to see that
$\Phi(w):=\exp\big(2\pi i F(w)\big)/f(e^{2\pi i w})$ is constant in~$\UH$. (Simply calculate its derivative.) Taking into
account that $F$ is defined up to an additive constant, we can normalize it in such a way that
\begin{equation}\label{EQ_Mlm_g-F}
f(e^{2\pi i w})=\exp\big(2\pi i F(w)\big)\quad\text{for all~$w\in\UH$}.
\end{equation}

Moreover, for any ${w\in \UH}$, we have
\begin{equation}\label{EQ_Mlm_F-periodic+1}
 F(w+1)-F(w)~=~\int_w^{w+1}\frac{e^{2\pi i \zeta} f'(e^{2\pi i \zeta})}{f(e^{2\pi i
 \zeta})}\di\zeta=\frac{1}{2\pi i}\varointctrclockwise_{|z|=\exp(-2\pi \Im w)}\frac{f'(z)}{f(z)}\di z~=~1.
\end{equation}

It remains to show that $F$ is univalent in~$\UH$.  Suppose ${F(w_1)=F(w_2)}$ for some ${w_1,w_2\in
\UH}$. Then taking into account that $f$ is univalent, by \eqref{EQ_Mlm_g-F} we must have ${w_1=w_2+k}$ for some
${k\in\Z}$.  With the help of~\eqref{EQ_Mlm_F-periodic+1}, we deduce that ${F(w_1)=F(w_2+k)=F(w_2)+k}$. Hence,
${k=0}$ and~${w_1=w_2}$.
\qed

\bigskip

\begin{proof}[\proofof{Theorem~\ref{TH_ch-para-hyper}}]\mbox{~}\nopagebreak

\StepP{\ref{IT_(A)}} If $S[h]=\C$, then, by Theorem~\ref{Thm:model}, $\varphi$ is parabolic of zero hyperbolic step. Assume now that $S[h]\neq\C$. Then $S[h]$ is a hyperbolic domain, and hence for any $z_0\in\UD$, we have
	\begin{eqnarray*}
		\lim_{n\to+\infty}\rho_{\D}\big(\varphi^{\circ n}(z_0),\varphi^{\circ(n+1)}(z_0)\big)&\geq&
		\lim_{n\to+\infty}\rho_{S[h]}\big(h(\varphi^{\circ(n+1)}(z_0)),h(\varphi^{\circ n}(z_0))\big)\\
		&=&\lim_{n\to+\infty}\rho_{S[h]}\big(h(z_0)+n+1,h(z_0)+n\big)\\&=&
		\rho_{S[h]}\big(h(z_0),h(z_0)+1\big)>0,
	\end{eqnarray*}
	where we have used that $z\mapsto z+1$ is an isometry in $S[h]$ endowed with its hyperbolic distance~$\rho_{S[h]}$.
Hence, $\varphi$ is not parabolic of zero hyperbolic step.

\StepP{\ref{IT_(B)}} Assume that $\varphi$ is parabolic of positive hyperbolic step with canonical model $(\H,h_0,z\mapsto z+1)$. Then, by Theorem~\ref{Thm:uniqness}, there exists a biholomorphism $\eta$ from $\H$ onto $S[h]$ such that ${\eta(z+1)}={\eta(z)+1}$ for all ${z\in\H}$. In particular, $S[h]\neq\C$. According to Lemma~\ref{Lem:intertwining-exp}\,\ref{IT_periodic-univalent}, there exists a holomorphic function $f:\D\to\C$ with $f(0)=0$ and such that $e^{2\pi i \eta(z)}=f(e^{2\pi i z})$ for all ${z\in\H}$.  Clearly, $f(\UD^*)$ contains a punctured neighbourhood of~$0$, i.e.
$$
\{e^{2\pi i z}:\Im z>r\}\subset f(\D^*)=\{e^{2\pi i \eta(z)}:z\in\H\}
$$
for some $r>0$.
Since $\eta(z+1)=\eta(z)+1$ for all ${z\in\H}$, we conclude that ${\{z:\Im z>r\}}\subset\eta(\H)= S[h]$, i.e. $S[h]$ contains some horizontal upper half-plane.

Assume now that $S[h]\neq \C$ and contains a horizontal upper half-plane.
Replacing $h$ with $h+ia$ for a suitable $a\in\Real$, we may suppose that ${\UH\subset S[h]}$.
Since $\varphi$ is non-elliptic,
its canonical holomorphic model is of the form ${(S_I,h_0,z\mapsto z+1)}$, where $S_I:=\{z:\Im z\in I\}$ with $I\in\big\{\Real,{(0,+\infty)},{(-\infty,0)}\big\}$ or with ${I\subset\Real}$ being a bounded open interval.

By Theorem~\ref{Thm:uniqness}, there exists a biholomorphism $\eta$ of~$S[h]$ onto~$S_I$ satisfying ${\eta(w+1)}={\eta(w)+1}$ for all ${w\in S[h]}$. Since ${S[h]\neq\C}$, it immediately follows that also ${S_I\neq\C}$, i.e. ${I\neq\Real}$. Denote ${J:=\{e^{-2\pi y}:y\in I\}}$. Arguing as above, with the help of  Lemma~\ref{Lem:intertwining-exp}\,\ref{IT_periodic-univalent} applied to ${g:=\eta|_\UH}$, we conclude that the set $\{\zeta:|\zeta|\in J\}$, which is the image of $S_I$ under $z\mapsto e^{2\pi i z}$, contains a punctured neighbourhood of~$0$. It follows that $I$ is not bounded from above. Thus, ${I=(0,+\infty)}$, i.e. the base space $S_I$ of the canonical holomorphic model for~$\varphi$ is $\UH$, as desired.

\medskip
\noindent\textsc{Proof of~\ref{IT_(C)}} is similar to that of~\ref{IT_(B)}.
In turn, assertion~\ref{IT_(D)} follows  from \ref{IT_(A)}, \ref{IT_(B)}, and~\ref{IT_(C)}.
\end{proof}

\end{document}